\theoremstyle{definition}
\newtheorem{definition}{Definition}[section]
\newtheorem{theorem}[definition]{Theorem}
\newtheorem{prop}[definition]{Proposition}
\newtheorem{lemma}[definition]{Lemma}
\newtheorem{corollary}[definition]{Corollary}
\newtheorem{remark}[definition]{Remark}
\newtheorem{example}[definition]{Example}
\numberwithin{equation}{section}
\newcommand{\K}{\mathbb K}
\newcommand{\p}{\mathcal{P}}
\newcommand{\q}{\mathcal{Q}}
\newcommand{\g}{\mathcal{G}}
\newcommand{\fa}{\mathfrak{a}}
\newcommand{\Sm}{\mathbb{S}}
\newcommand{\T}{\mathbb{T}}
\newcommand{\I}{\mathrm{I}}
\newcommand{\Op}{\mathcal{O}\mathrm{p}}
\title{Universal Coacting Hopf Algebra of a finite-dimensional Algebra over an Operad}
\author{Saikat Goswami}
\address{Saikat Goswami}
\address{TCG CREST, Institute for Advancing Intelligence, Kolkata-700091, India.}
\address{Ramakrishna Mission Vivekananda Educational and Research Institute, Howrah-711202, India.}
\email{saikatgoswami.math@gmail.com}
\author{Satyendra Kumar Mishra}
\address{Satyendra Kumar Mishra}
\address{Indian Institute of Technology (BHU), Department of Mathematical Sciences,  Varanasi-221005, India.}
\email{satyamsr10@gmail.com}
\author{Suman Pattanayak}
\address{Suman Pattanayak}
\address{TCG CREST, Institute for Advancing Intelligence, Kolkata-700091, India.}
\address{Academy of Scientific and Innovative Research (AcSIR), Ghaziabad-201002, India.}
\email{sumanp.math@gmail.com}
\keywords{Algebras over operads, Universal constructions, Graded algebras, Enriched category, Automorphisms.}
\subjclass[2020]{16S37, 16T10, 17A36, 17A45, 18D20,  18M70.\\
Email: saikatgoswami.math@gmail.com (Goswami), satyamsr10@gmail.com (Mishra), sumanp.math@gmail.com (Pattanayak)}
\begin{document}

\maketitle

\vspace{-0.8cm}
\begin{abstract}
	A. L. Agore and G. Militaru constructed a new invariant (a ``universal coacting Hopf algebra") for some finite-dimensional binary quadratic algebras such as Lie/Leibniz algebras, associative algebras, and Poisson algebras with prominent applications. In our recent work, we extended their construction from the binary case to Lie-Yamaguti algebras (an algebra with a binary and a ternary bracket). In this paper, we give a construction of universal coacting bi/Hopf algebra for any finite-dimensional algebra over a symmetric operad $\mathcal{P}$. Precisely, we construct a universal algebra $\mathcal{C}(\mathfrak{a})$ for a finite-dimensional $\mathcal{P}$-algebra $\mathfrak{a}$. Furthermore, we show that the category of finite dimensional $\mathcal{P}$-algebras is enriched over the dual category of commutative algebras. This enrichment gives a unique bialgebra structure on the universal algebra $\mathcal{C}(\mathfrak{a})$, making it a universal coacting bialgebra of the $\mathcal{P}$-algebra $\mathfrak{a}$. Subsequently, we obtain a universal coacting Hopf algebra of the $\mathcal{P}$-algebra $\mathfrak{a}$. We also show that universal coacting Hopf algebra constructed here coincides with the existing cases of Lie/Leibniz, Poisson, and associative algebras. Furthermore, our operadic approach helps us construct a universal coacting algebra for algebras over a graded symmetric operad (graded algebras with finite-dimensional homogeneous components). This allows us to discuss the universal constructions for $k$-ary quadratic algebras and graded algebras like graded Leibniz, graded Poisson algebras, Gerstenhaber algebras, BV algebras, etc. In the end, we characterize $\mathcal{P}$-algebra automorphisms in terms of the invertible group-like elements of the finite dual bialgebra $\mathcal{C}(\mathfrak{a})$. We also give a characterization of the abelian group gradings of finite dimensional $\mathcal{P}$-algebras.  
\end{abstract}

\vspace{0.3cm}
\tableofcontents{\setcounter{tocdepth}{2}}


\section{\large Introduction}
	Operads are structures that formalize the concept of operations with multiple inputs. They provide a framework to study and encode various types of algebras. The term ``operad" was first introduced by J. P. May \cite{May2}, though its origins trace back to Lazard’s work \cite{Lazard}. Operads have played a key role in the study of loop spaces, with notable contributions from topologists such as M. Boardman and R. Vogt \cite{Boardman-Vogt}, J. P. May  \cite{May1}, and J. Stasheff \cite{Stasheff}. Beyond topology, operads have found applications in differential geometry, quantum field theory, category theory, string topology, and combinatorics \cite{May2,Markl,Vallette}.
	There are multiple equivalent ways to define an operad, but the monoidal definition is particularly straightforward to state. Let $\mathrm{Vect}$ denote the category of vector spaces over a field $\K$ and $\mathrm{Func_{Vect}}$ the category of endofunctors on $\mathrm{Vect}$. An algebraic operad $\mathcal{P}$ is a monoid $(\mathcal{P}, \gamma, \eta)$ in the category $\mathrm{Func_{Vect}}$. The other equivalent descriptions of operads, can be found in \cite[Chapter 5]{Loday-Vallette}. In this paper, we focus on symmetric (graded) operads, which we will explicitly define in the next section. Representations of an operad $\mathcal{P}$ are called algebras over $\mathcal{P}$ or $\mathcal{P}$-algebras. Classical algebras such as associative, Lie, and commutative algebras can be defined as algebras over operads $\mathcal{A}ss,~\mathcal{C}om,$ and $\mathcal{L}ie$, respectively. Other operads, such as those encoding pre-Lie, Leibniz, Zinbiel, perm, and Poisson algebras can be found in \cite[Chapter 13]{Loday-Vallette}. The operadic approach offers new insights, even for classical algebras, by treating different types of algebras within a unified framework. 

	\medskip
	Given two unital associative algebras $A$, $B$ over a field $\K$, a coalgebra measuring from $A$ to $B$ is a pair $(C,\phi)$, where $C$ is a coalgebra over $\K$ and $\phi:C\rightarrow \mathrm{Hom}(A,B)$ is a $\K$-linear map such that 
	\begin{equation*}
		\phi(c)(ab) ~=~ \sum \phi(c_{(1)})(a)\cdot \phi(c_{(2)})(b) 
		\quad \text{and} \quad 
		\phi(c)(1_A) ~=~ \varepsilon(c)1_B 
		\quad\mbox{for all } c\in C,
	\end{equation*}
	where $a,b\in A,~\Delta(c)=\sum c_{(1)}\otimes c_{(2)}$.	
	A coalgebra measuring $(C,\phi)$ can be seen as a generalization of algebra homomorphisms from $A$ to $B$, since for any group-like element $c\in C$, $\phi(c): A\rightarrow B$ is an algebra homomorphism. M. E. Sweedler \cite{Sweedler} first introduced a universal measuring coalgebra for associative algebras. This universal construction leads to an enrichment of the category of associative algebras over the monoidal category of cocommutative coalgebras and finally, associates a natural universal acting Hopf algebra to an algebra (see \cite[Chapter 7]{Sweedler}). Recent articles \cite{Banerjee, Banerjee24,Hyland} studied measurings between monoids, algebras over operads, Hopf algebroids and discussed the associated enrichments and other applications. The dual notion of an existing notion is equally important in Hopf algebra theory \cite{Sweedler}. Yu. I. Manin \cite{Manin} and D. Tambara \cite{Tambara} dualised Sweedler's universal construction and called it `universal coacting bi/Hopf algebra'. These objects have many applications in non-commutative algebraic geometry \cite{Raed}. The universal coacting algebra exists only for finite-dimensional algebras. 
In last five years, there are several successful attempts to explicitly construct a universal coacting bi/Hopf algebra for associative algebras \cite{Mil}, Lie and Leibniz algebras \cite{Ag-Mil}, Poisson algebras\cite{Ag-Mil2}, and Lie-Yamaguti algebras \cite{Lie-Y}. These constructions provided tools to solve hard and complex problems in the literature such as:  1. characterizing the group of automorphisms of an algebra, and 2. characterizing the abelian group gradings on an algebra. We also refer the reader to a construction of universal (co)acting objects with generalized duality results in the case of $\Omega$-algebras \cite{Agore-omega}.

	\medskip
	In this paper, we consider finite-dimensional algebras over a symmetric operad and construct a universal coacting algebra unifying the different constructions \cite{Ag-Mil, Ag-Mil2, Lie-Y, Mil} in literature. We show that the category of finite-dimensional $\mathcal{P}$-algebras is enriched over the dual category of commutative algebras. Consequently, we obtain a universal coacting bi/Hopf algebra for any finite-dimensional algebra over $\mathcal{P}$. We also consider the notion of graded operads and algebras over a graded operad to construct a universal algebra for graded algebras. We explicitly show that our universal coacting bi/Hopf algebra coincides with the universal coacting bi/Hopf algebra of Lie/Leibniz algebras \cite{Ag-Mil}, Poisson algebras \cite{Ag-Mil2}, and associative algebras \cite{Mil} when the underlying operad $\p$ is $\mathcal{L}ie/\mathcal{L}eib$, $\mathcal{P}ois$, and $\mathcal{A}ss$ respectively. In a sequel, we also discuss the universal constructions in the special cases of some (graded) quadratic algebras such as Zinbiel algebras, pre-Lie algebras, perm algebras, $k$-ary quadratic algebras, graded Lie algebras, graded Poisson algebras, Gerstenhaber algebras, Batalin-Vilkovisky algebras, and Loday type algebras. In the end, we give a complete description of automorphism group of a finite dimensional $\p$-algebra and characterize the abelian group gradings on any finite dimensional $\p$-algebra. 
	
	\medskip\noindent
	{\bf Organization of the Paper:}~\\
	
	\vspace{-0.4cm}
	In Section \ref{sec-2}, we recall all the necessary definitions and basic notions related to the theory of operad. In Section \ref{sec-3}, we first construct a universal algebra $\mathcal{C}(\mathfrak{a})$ for a finite dimensional $\mathcal{P}$-algebra $\mathfrak{a}$. In the process, we prove Theorem \ref{thm_left_adjoint}, which generalizes the result obtained for the case of associative algebras \cite[Theorem 1.1]{Mil}, Lie/Leibniz algebras \cite[Theorem 2.1]{Ag-Mil}, and Poisson algebras \cite[Theorem 2.2]{Ag-Mil2} to the case of any finite dimensional algebra over a symmetric operad. In the end of the section, we give a construction for algebras (with finite-dimensional homogeneous components) over graded operads. Section \ref{sec-4} is devoted to a discussion on the enrichment of the category of finite-dimensional $\mathcal{P}$-algebras over the category dual to the category $\mathrm{ComAlg}$ of commutative algebras. As a consequence of this enrichment, we obtain a natural universal coacting bi/Hopf algebra for any finite dimensional $\mathcal{P}$-algebra. In Section \ref{sec-5}, we show that the universal polynomials for finite-dimensional algebras over binary quadratic operad are generated by universal  polynomials of degree $2$. It follows that universal coacting bi/Hopf algebra obtained for a finite dimensional $\p$-algebra coincides with the universal coacting bi/Hopf algebra obtained for the case of Lie/Leibniz algebras \cite{Ag-Mil}, Poisson algebras \cite{Ag-Mil2}, and associative algebras \cite{Mil} if we take the operad $\p$ to be $Lie$, $Pois$, and $Ass$ respectively. We also describe universal polynomials and algebras for other (graded) quadratic algebras. In Section \ref{sec-6}, we give a complete description of automorphism group of a finite dimensional $\mathcal{P}$-algebra $\mathfrak{a}$ in Theorem \ref{App-bijection} and characterize the $G$-gradings on $\mathfrak{a}$ for an abelian group $G$ in Theorem \ref{App-gradings}.
	
	\medskip\noindent
	{\bf Notations and Conventions:}~\\
	
	\vspace{-0.4cm}
	In this paper, we fix a field $\K$ of characteristic $0$. We take all vector spaces, linear maps and tensor products over $\K$, unless stated otherwise. By a commutative algebra, we always mean a commutative, associative algebra. We denote by $\mathrm{Alg}$, $\mathrm{ComAlg}$, $\mathrm{ComBiAlg}$, and $\mathrm{ComHopf}$, the categories of associative algebras, commutative algebras, commutative bialgebras, and commutative Hopf algebras, respectively. For any given operad $\p$, we denote by $\p$-Alg the category of all $\p$-algebras. 
	Throughout this article, $\Sm_n$ denotes the usual group of permutations on $n$ elements. For foundational material on Hopf algebra theory and category theory, we refer the readers to \cite{Sweedler} and \cite{MacLane}, respectively.


\medskip
\section{\large Preliminaries}\label{sec-2}
	
	In the section, we recall the all the necessary definitions and notions related to operads and graded operads, that we use throughout this article.  
	
	\begin{definition}
		\label{S-module}
		An {\bf $\Sm$-module}  over $\K$ is a family 
		$\p = (\p(0),\p(1),\ldots,\p(n),\ldots)$	
		of right $\K[\Sm_n]$-modules $\p(n)$. The \textbf{tensor product} of two $\Sm$-modules $\p$ and $\q$ is given by 
		\begin{equation*}
			\p \otimes \q(n) := \bigoplus_{i+j=n} \p(i) \otimes \q(j) \otimes \K[Sh(i,j)]
		\end{equation*}
		where $Sh(i,j)$ denotes the set of all $(i,j)$-shuffles of $\Sm_n$. The category of $\Sm$-modules, denoted by $\Sm$-mod, forms a monoidal category given by the following \textbf{composition product} 
		\begin{equation*}
			\p \circ \q (n) := \bigoplus_{k \ge 0} \p(k) \otimes_{\Sm_k} \q^{\otimes k}
		\end{equation*}
		The notation $\q^{\otimes k}$ denotes the tensor product of $k$ copies of the $\Sm$-module $\q$. The unit for the composition is given by $\I=(0,\K,0,\cdots)$. 
		See \cite[Section 5.1, pg 123]{Loday-Vallette} for further details regarding $\Sm$-modules and their products.   
		
	\end{definition}
	
	\begin{definition}[\bf Monoidal Definition]\label{monoidal_def}
		An {\bf operad} $\mathcal{P}$ is a monoid in the monoidal category of $\Sm$-modules. More precisely, it is an $\Sm$-module $\p= \{\p(n)\}_{n \ge 0}$ endowed with a morphisms of $\Sm$-module 
		\begin{equation*}
			\gamma: \p \circ \p \to \p 
		 	\quad \text{and} \quad 
		 	\eta: \I \to \p
		\end{equation*} 
		called composition map and unit map respectively, which makes $\p$ into a monoid. 
		For a more detailed description of the monoidal definition we refer the readers to \cite[Section 5.2.1, pg 131]{Loday-Vallette}. 
	\end{definition}
	
	There are several other equivalent definitions of an operad, including the classical, combinatorial, and partial definitions. We describe the partial definition below, while the classical and combinatorial definitions can be found in \cite[pg 142 and pg 161]{Loday-Vallette}, respectively.
	 
	\begin{definition}[\bf Partial Definition]\label{partial_def}
		An {\bf operad} $\p$ is a sequence $\{\p(n)\}_{n \in \mathbb{N}}$ of vector spaces, together with partial composition maps
		\begin{equation*}
			\circ_i : \p(m) \times \p(n) \rightarrow \p(m-1+n)
		\end{equation*}
		for $1 \leq i \leq m$ and $n \geq 0$, along with a unit element $\mathbbm{1} \in \p(1)$ satisfying the following conditions
		\begin{itemize}
			\item (Associativity) for all $\mu \in \p(m)$, $\nu \in \p(n)$, and $\eta \in \p(r)$
			
			\begin{equation}
				(\mu \circ_i \nu) \circ_j \eta = 
				\begin{cases}
					\begin{array}{ll}
					(\mu \circ_j \eta) \circ_{i+j-1} \nu \quad &1 \le j \le i-1, \\ 
					\mu \circ_i (\nu \circ_{j-i+1} \eta) \quad &i \le j \le m+i-1, \\
					(\mu \circ_{j-m+1} \eta) \circ_i \nu \quad &m+i \le j \le m-1+n.
					\end{array}
 				\end{cases}
			\end{equation}
			\smallskip
			\item (Equivariance) Each $\p(n)$ has a right action of $\Sm_n$, $\cdot:\p(n) \times \Sm_n \to \p(n)$, denoted by $(\mu,\sigma) \mapsto \mu \cdot \sigma$, such that for all $\mu \in \p(n)$, $\nu \in \p(m)$ and $\sigma \in \Sm_n$, $\tau \in \Sm_m$
			\begin{equation}
				(\mu\cdot \sigma) \circ_i (\nu \cdot \tau)= (\mu \circ_i\nu)\cdot (\sigma \circ_i \tau)
			\end{equation}
			where $\sigma \circ_i \tau$ is the block permutation, with the $i^{th}$ entry of $\sigma$ being replaced by the entire block $\tau$.  
			\smallskip
			\item (Unitality) There exists a unique element $\mathbbm{1} \in \p(1)$ such that $\mu \in \p(n)$ $(\text{with}~n \ge 1)$, and $1 \le i \le n$,
			\begin{equation}
				\mu \circ_i \mathbbm{1} ~=~ \mu ~=~ \mathbbm{1} \circ_1 \mu.
			\end{equation}
		\end{itemize}
		A pictorial description of the above mentioned conditions can be found in \cite[Section 5.3.4, pg 146]{Loday-Vallette}.
	\end{definition}
	
	Let us consider a fundamental example to illustrate the basic idea of operads: the endomorphism operad, associated with a given vector space serves as a key example to understand the operadic composition. 
	
	\begin{example}[\bf Endomorphism operad]\label{ExEnd}
		For any vector space $\fa$, the endomorphism operad $\mathrm{End}_\fa$ is given by
		\begin{equation}
			\mathrm{End}_\fa(n) := \mathrm{Hom}(\fa^{\otimes n},\fa) \quad \text{for all}~ n \ge 0,
		\end{equation}
		where $\fa^{\otimes n}$ denotes the $n$-fold tensor product of $\fa$, and by convention $\fa^{\otimes 0} = \K$. The partial composition maps given by $\circ_i: \mathrm{Hom}(\fa^{\otimes n}, \fa) \times \mathrm{Hom}(\fa^{\otimes m},\fa) \to \mathrm{Hom}(\fa^{\otimes m-1+n},\fa)$ are defined as follows: 	
		\begin{equation}\label{def_partial_comp_end_operad}
			f \circ_i g (a_1,\ldots, a_{m-1+n}) := f(a_1, \ldots,a_{i-1}, g(a_i,\ldots,a_{i+m-1}), a_{i+m},\ldots, a_{m-1+n})
		\end{equation}
		for all $a_1,\ldots,a_{m-1+n} \in \fa$. The right action $\cdot: \mathrm{Hom}(\fa^{\otimes n}, \fa) \times \Sm_n \to \mathrm{Hom}(\fa^{\otimes n}, \fa)$ is given by 
		\begin{equation}\label{def_end_opd_sym_action}
			(f \cdot \sigma) (a_1,\ldots,a_{n}) := f(a_{\sigma(1)},\ldots,a_{\sigma(n)})  
		\end{equation}
		and the unit element $\mathbbm{1}$ is the identity map $\mathrm{id}_\fa \in \mathrm{End}_\fa(1)$. 
	\end{example}

	\begin{example}\label{com}
		Consider a constant sequence of the field $\K$ as follows:   
		\begin{equation*}
			Com(n) := \mathbb{K} \quad \text{for all } n \ge 0.
		\end{equation*}
		together with partial compositions, $\mathbb{S}_n$ right action, and unit element described below. 
		
		\medskip\noindent 
		{\it Partial composition:} For any $a \in Com(m) = \mathbb{K}$ and $b \in Com(n)= \mathbb{K}$ define 
		\begin{equation*} \label{Com_circle_product}
			a \circ_{i} b := ab \quad \text{for all}~ 1 \le i \le m.
		\end{equation*}
		(the usual multiplication of $\mathbb{K}$). 
		The associativity of the partial composition maps follows from the associativity and commutativity of the multiplication in $\mathbb{K}$.
		
		\medskip \noindent
		{\it $\mathbb{S}_n$ action on $Com(n)$:}
		The right action of $\mathbb{S}_n$ on $Com(n)$ is assumed to be trivial for each $n \ge 1$, i.e., for any $\sigma \in \mathbb{S}_n$ and $f \in Com(n)$; $f \cdot \sigma = f$.
		
		\medskip \noindent
		{\it Unit element:}
		For any $n \ge 1$ let $1_{\mathbb{K}}^{(n)} \in Com(n) = \mathbb{K}$ denote the multiplicative identity of $Com(n) = \mathbb{K}$. 
		With this notation $1_{\mathbb{K}}^{(1)}$ is the unit element in $\mathcal{C}om$.	
		Thus, making $\mathcal{C}om$ an operad.
	\end{example}

	Next, we give an example of an operad $\p$ in which all the spaces $\p(n)$ are equal to a fixed vector space $V$.
	
	\begin{example}\label{exm_V}
		Let $V$ be a vector space with a basis being given by the set $\{b_1,\ldots,b_p\} = B$. Now, we consider the following sequence of vector spaces: 
		\begin{equation}
			\p(n) := \mathrm{Hom}(B,\K) \cong V \quad \text{for any } n \ge 0,
		\end{equation} 
		where $\mathrm{Hom}(B,\K)$ denotes the vector space consisting of all set maps from $B$ to $\K$. We define the partial composition, $\mathbb{S}_n$ right action on $\p(n)$, and the unit element as follows:
		
		\medskip\noindent 
		{\it Partial composition:} For any $f \in \p(m) = \mathrm{Hom}(B,\K)$ and $g \in \p(n)= \mathrm{Hom}(B,\K)$ we define 
		\begin{equation*} \label{Com_circle_product}
			f \circ_{i} g := f \cdot g \quad \text{for all}~ 1 \le i \le m,
		\end{equation*}
		where $f \cdot g$ is defined pointwise using the multiplication defined on $\mathbb{K}$. 
		We note that the associativity of the partial composition maps follows from the associativity and commutativity of the multiplication in $\mathbb{K}$.
		
		\medskip \noindent
		{\it $\mathbb{S}_n$ action on $\p(n)$:}
		The right action of $\mathbb{S}_n$ on $\p(n)$ is assumed to be trivial for each $n \ge 1$, i.e., for any $\sigma \in \mathbb{S}_n$ and $f \in \p(n)$; $f \cdot \sigma = f$.
		
		\medskip \noindent
		{\it Unit element:}
		Considering the constant map $\mathbbm{1}: B \to \K$, defined as $\mathbbm{1}(b) := 1_\K$ for all $b \in B$, we note that $\mathbbm{1} \in \p(1)$ acts as the the unit element in $\p$.	
		Thus, turning $\p$ into an operad.
	\end{example}

	\begin{example}\label{ass}
		Consider a sequence of vector spaces over $\K$ as follows: 
		\begin{equation} \label{def_Ass}
			Ass(n) := 
			\begin{cases}
				0 \quad \qquad n = 0, \\
				\mathbb{K} ~\quad \quad n=1, \\
				\mathbb{K}[\mathbb{S}_n] \quad n \ge 2.
			\end{cases}
		\end{equation}
		along with  partial compositions, $\mathbb{S}_n$ right action, and unit element described below.
		
		\medskip\noindent
		{\it Partial composition:} 
		Let $\sigma \in \mathbb{K}[\mathbb{S}_n]$ and $\tau \in \mathbb{K}[\mathbb{S}_m]$ be two basis elements. Define 
		\begin{equation*}
			\sigma \circ_i \tau := \text{The block 	permutation, where the} ~i^{th}~ \text{entry of} ~\sigma~ \text{is replaced by the entire block} ~\tau,	
		\end{equation*} 
		then extend it linearly to whole of $\mathbb{K}[\mathbb{S}_n]$ and $\mathbb{K}[\mathbb{S}_m]$.
		
		\medskip\noindent 
		{\it $\mathbb{S}_n$ action on $Ass(n)$:} 
		The right action of $\mathbb{S}_n$ on $Ass(n) = \mathbb{K}[\mathbb{S}_n]$ is the regular action that is defined as follows. For any basis element $\sigma \in \mathbb{K}[\mathbb{S}_n]$ and $\tau \in \mathbb{S}_n$, defining $\sigma \tau := \sigma \cdot \tau$ (group multiplication of $\mathbb{S}_n$), and then extending it linearly to whole $\mathbb{K}[\mathbb{S}_n]$ gives us the $\mathbb{S}_n$ right action on $\mathbb{K}[\mathbb{S}_n]$. 
		
		\medskip\noindent
		{\it Unit element:} 
		$1 \in \mathbb{K} = Ass(1)$ is the unit element of $Ass$. Thus, making $\mathcal{A}ss$ an operad.  
	\end{example}

	For more examples of operads, we refer the reader to \cite{Apurba-Da}, where a comprehensive list of examples along with detailed descriptions can be found.
	
	\begin{definition}
		Let $\mathcal{P} = \{\mathcal{P}(n) : n \ge 0\}$ and $\mathcal{Q} = \{\mathcal{Q}(n) : n \ge 0\}$ be two operads. A {\bf morphism of operads} $\gamma: \mathcal{P} \to \mathcal{Q}$ consists of a sequence of linear maps $\gamma_n : \mathcal{P}(n) \to \mathcal{Q}(n)$ for all $n \ge 0$ such that 
		\begin{equation} \label{morphism_unital_cond}
			\gamma_1 (\mathbbm{1}_{\mathcal{P}}) = \mathbbm{1}_{\mathcal{Q}}
		\end{equation}
		and for all $n,m \ge 1$ and $\sigma \in \mathbb{S}_n$ the following diagrams commute.
		\begin{equation} \label{morphism_diagram}
			\begin{tikzcd}
				{\mathcal{P}(n) \times \mathcal{P}(m)} & {\mathcal{P}(m-1+n)} &&& {\mathcal{P}(n)} & {\mathcal{P}(n)} \\
				{\mathcal{Q}(n) \otimes \mathcal{Q}(m)} & {\mathcal{Q}(m-1+n)} &&& {\mathcal{Q}(n)} & {\mathcal{Q}(n)}
				\arrow["{\circ_i}", from=1-1, to=1-2]
				\arrow["{\circ_i}", from=2-1, to=2-2]
				\arrow["{\gamma_n \otimes \gamma_m}"', from=1-1, to=2-1]
				\arrow["{\gamma_{m-1+n}}", from=1-2, to=2-2]
				\arrow["\sigma", from=1-5, to=1-6]
				\arrow["\sigma", from=2-5, to=2-6]
				\arrow["{\gamma_n}"', from=1-5, to=2-5]
				\arrow["{\gamma_n}", from=1-6, to=2-6]
			\end{tikzcd}
		\end{equation}
	\end{definition}
	
	\begin{definition}
		A {\bf $\p$-algebra} is a vector space $\fa$ along with a morphism of operads $\gamma: \mathcal{P} \to \mathrm{End}_\fa$. Let $\fa$ and $\mathfrak{b}$ be two $\p$-algebras with the $\p$-algebra structures given by $\gamma: \p \to \mathrm{End}_\fa$ and $\widetilde{\gamma}:\p \to \mathrm{End}_\mathfrak{b}$, respectively. A {\bf $\p$-algebra morphism} from $\fa$ to $\mathfrak{b}$ is a linear map $f: \fa \to \mathfrak{b}$  such that for any $n \in \mathbb{N}$ and $\mu \in \p(n)$ 
		\begin{equation}
			f\big(\gamma_n(\mu)(a_{1},\ldots,a_{n})\big) = \widetilde{\gamma}_n(\mu)(f(a_{1}),\ldots,f(a_{n})).   
		\end{equation}
		Equivalently, the following diagram commutes for any $k \in \mathbb{N}$ and $\mu \in \p(k)$
		\begin{equation*}
			\begin{tikzcd}
				{\mathfrak{a}^{\otimes k}} & {\mathfrak{a}} \\
				{\mathfrak{b}^{\otimes k}} & {\mathfrak{b}}
				\arrow["{\gamma_k}", from=1-1, to=1-2]
				\arrow["{f^{\otimes k}}"', from=1-1, to=2-1]
				\arrow["f", from=1-2, to=2-2]
				\arrow["{\widetilde{\gamma}_k}", from=2-1, to=2-2]
			\end{tikzcd}
		\end{equation*}
		An equivalent definition of $\p$-algebras, using the language of category theory, can be found in \cite[Section 5.3.4, pg 146]{Loday-Vallette}.
	\end{definition}

	\begin{remark}
		Any algebra over the operad $\mathcal{C}om$ (see Example \ref{com}) is a commutative and associative algebra. 
		In contrast, an algebra over the operad described in Example \ref{exm_V} is a vector space endowed with $p$ distinct binary operations, each of which is commutative and associative, but with no imposed compatibility conditions between them. 
		Finally, an algebra over the operad $\mathcal{A}ss$ (see Example \ref{ass}) is an associative algebra.
	\end{remark}
	
	\begin{definition}
		Let $M$ be an $\Sm$-module. The \textbf{free operad} over the $\Sm$-module $M$ is an operad $\T(M)$ equipped with an $\Sm$-module morphism $\eta(M): M \to \T(M)$ which satisfies the following universal property:
		Any $\Sm$-module morphism $f: M \to \p$, where $\p$ is an operad, extends uniquely into an operad morphism $\widetilde{f}: \T(M) \to \p$ such that the following diagram commutes
		\begin{equation*}
			\begin{tikzcd}
				M & {\T(M)} \\
				& \p
				\arrow["{\eta(M)}", from=1-1, to=1-2]
				\arrow["f"', from=1-1, to=2-2]
				\arrow["{\widetilde{f}}", from=1-2, to=2-2]
			\end{tikzcd}
		\end{equation*}
		In other words, the functor $\T: \Sm$-mod $\to \Op$ is left adjoint to the forgetful functor $\Op \to \Sm$-mod, where $\Op$ denotes the category of operads over $\K$. An explicit construction of the free operad $\T(M)$ is described as follows: 
		\begin{equation}
			\T(M)(n) := \bigoplus_{t \in RT(n)} M(t),
		\end{equation}
		where $RT(n)$ denotes the set of all rooted trees with $n$ leaves such that each vertex has atleast one input. And $M(t)$ denotes the tree wise tensor product, for a detailed description of $M(t)$ see \cite[Section 5.6.1, pg 160]{Loday-Vallette}. 
	\end{definition}

	We illustrate this construction by considering a specific example, primarily highlighting the structure of a binary quadratic operad. 	
		
	\begin{example}\label{EXAMPLE}
		Let $E$ be a right $\Sm_2$-module. Consider the binary quadratic $\Sm$-module $\overline{E} := (0,0,E,0,0, \ldots)$.
		We denote the free operad over $\overline{E}$ by $\mathbb{T}(E)$ instead of $\mathbb{T}(\overline{E})$. We now describe a way to construct $T(E)(n)$, for each $n \in \mathbb{N}$. The non-trivial summands of $\T(E)(n)$ can be identified with a collection of subsets $S \subseteq \{1,2,\ldots,n\}$ which satisfies the following properties:  
		\begin{itemize}
			\item Each $S$ in the collection has cardinality greater than $1$, 
			
			\item The collection includes $\{1,2,\ldots,n\}$, and 
			
			\item For subsets $S, S'$ belonging to the collection, either $S \subseteq S'$ or $S' \subseteq S$ or $S \cap S' = \emptyset$. 
		\end{itemize}
		
		One may think of each $S$ in the collection as the set of leaves lying above a given vertex in the tree (and in this situation each vertex of the tree is uniquely determined by the set of leaves above it). So for the set $\{1,2,3\}$ there are four possibilities:
 		\begin{equation*}
 			\lbrace \lbrace 1, 2\rbrace, \lbrace 1, 2, 3\rbrace\rbrace, \quad \lbrace \lbrace 1, 3\rbrace, \lbrace 1, 2, 3\rbrace\rbrace, \quad \lbrace \lbrace 2, 3 \rbrace, \lbrace 1, 2, 3\rbrace\rbrace, \quad \lbrace\lbrace 1, 2, 3\rbrace\rbrace.
 		\end{equation*}
 		The last possibility indexes a trivial summand (because $\overline{E}(3)=0$), so that just leaves the first three, where the treewise tensor products are each isomorphic to $E \otimes E$.
		The first four terms of $\mathbb{T}(E)$ are described below:
		\begin{equation*}
			\T(E)(0) \cong 0, \quad \T(E) (1) \cong \K, \quad \T(E)(2) \cong E, \quad \T(E) (3) \cong 3 (E \otimes E), \quad \T(E) (4) \cong 15 (E \otimes E \otimes E)
		\end{equation*}
		where $k(E \otimes \cdots \otimes E)$ means direct sum of $k$ copies of $E \otimes \cdots \otimes E$. 
	\end{example}

	Having established the foundational concepts and examples related to operads, we now consider the graded context. While many of the definitions and constructions carry over, we present them briefly, focusing on the key changes needed for graded operads.
	
	\begin{definition}[\textnormal{\cite[Section 6.2.1, pg 197]{Loday-Vallette}}]
		A {\bf graded $\Sm$-module} $\g$ over $\K$ is a family 
		$\g = (\g(0),\g(1),\ldots,\g(n),\ldots)$
		of graded vector spaces over $\K$ equipped with a graded right $\K[\Sm_n]$-module structure. Analogous to the category $\Sm$-mod, one can show that the category g$\Sm$-mod of graded $\Sm$-modules over $\K$ also forms a monoidal category.
	\end{definition}

	\begin{definition}
		A {\bf graded operad $\g$} is a monoid in the monoidal category g$\Sm$-mod of graded $\Sm$-modules. For further details we refer the readers to \cite[Section 6.3.1, pg 199]{Loday-Vallette}.  
	\end{definition}

	Building on the partial definition of operads in Definition \ref{partial_def}, we can similarly define a partial definition for graded operads. We now provide an example of a graded operad.
	
	\begin{example}[\bf Graded endomorphism operad]
		Let $\mathfrak{A}=\oplus_{p \ge 0} \mathfrak{A}_p$ be a graded vector space. The graded endomorphism operad $\mathrm{End}_{\mathfrak{A}}$ is defined as follows:
		\begin{equation}
			\mathrm{End}_{\mathfrak{A}}(n) := \mathrm{Hom}_{\mathrm{g}}(\mathfrak{A}^{\otimes n},\mathfrak{A}) \quad \text{for all}~ n \ge 0.
		\end{equation}
		where $\mathfrak{A}^{\otimes n}$ denotes the $n$-fold tensor product of $\mathfrak{A}$ over $\K$, by convention $\mathfrak{A}^{\otimes 0} = \K$, and $\mathrm{Hom}_{\mathrm{g}}(\mathfrak{A}^{\otimes n},\mathfrak{A})$ denotes the graded vector space of all graded linear maps $f:\mathfrak{A}^{\otimes n} \to \mathfrak{A}$ of any degree, that is, for any $a_1 \in \mathfrak{A}_{p_1}, \ldots, a_n \in \mathfrak{A}_{p_n}$, we have $f(a_1,\ldots,a_n) \in \mathfrak{A}_{p_1 + \cdots + p_n+|f|}$. The partial composition maps $\circ_i: \mathrm{Hom}_\mathrm{g}(\mathfrak{A}^{\otimes n}, \mathfrak{A}) \times \mathrm{Hom}_\mathrm{g}(\mathfrak{A}^{\otimes m},\mathfrak{A}) \to \mathrm{Hom}_\mathrm{g}(\mathfrak{A}^{\otimes m-1+n},\mathfrak{A})$ and the right action $\cdot: \mathrm{Hom}_\mathrm{g}(\mathfrak{A}^{\otimes n}, \mathfrak{A}) \times \Sm_n \to \mathrm{Hom}_\mathrm{g}(\mathfrak{A}^{\otimes n}, \mathfrak{A})$ are defined in the same way as defined in Equation \eqref{def_partial_comp_end_operad} and \eqref{def_end_opd_sym_action}, which preserves the degree. 	
		The unit element $\mathbbm{1}$ is the identity map $\mathrm{id}_{\mathfrak{A}} \in \mathrm{End}_\mathfrak{A}(1)$. 
	\end{example}

	\begin{definition}
		Let $\mathcal{G} = \{\mathcal{G}(n) : n \ge 0\}$ and $\mathcal{H} = \{\mathcal{H}(n) : n \ge 0\}$ be two graded operads. A {\bf morphism of graded operads} $\gamma: \mathcal{G} \to \mathcal{H}$ consists of a sequence of degree zero linear maps $\gamma_n : \mathcal{G}(n) \to \mathcal{H}(n)$ for all $n \ge 0$ such that 
		\begin{equation} \label{graded_morphism_unital_cond}
			\gamma_1 (\mathbbm{1}_{\mathcal{G}}) = \mathbbm{1}_{\mathcal{H}}
		\end{equation}
		and for all $n,m \ge 1$ and $\sigma \in \mathbb{S}_n$, Diagram \ref{morphism_diagram} should commute. 
	\end{definition}
	
	\begin{definition}\label{algebra_graded_def}
		A {\bf $\g$-algebra} is a graded vector space $\mathfrak{c}$ equipped with a morphism of graded operads $\gamma: \mathcal{G} \to \mathrm{End}_\mathfrak{c}$.
	\end{definition}

	With these notations and definitions, we proceed to the construction of a universal algebra associated with a given finite-dimensional algebra over an arbitrary operad. 


\bigskip
\section{\large The construction of a Universal algebra for algebras over (Graded) Operads}\label{sec-3}

	In this section, we present a universal construction for $\p$-algebras, where $\p$ is an arbitrary (graded) operad. For any given $\p$-algebra $\mathfrak{a}$, we establish the existence of a functor \(\mathfrak{a} \otimes - : \mathrm{ComAlg} \to \p\)\text{-}\(\mathrm{Alg}\) from the category of commutative algebras to the category of $\p$-algebras (cf. Proposition \ref{funct_com_to_p-alg}). Additionally, in Theorems \ref{thm_left_adjoint} and \ref{thm_graded_left_adjoint}, we prove that this functor admits a left adjoint \(\mathcal{C}(\fa, -): \mathcal{P}\)\text{-}\(\mathrm{Alg} \to \mathrm{ComAlg}\), provided certain conditions are satisfied.
	Our construction not only unifies the results of \cite[Theorem 2.1]{Ag-Mil}, \cite[Theorem 2.2]{Ag-Mil2}, \cite[Theorem 3.1]{Lie-Y}, and \cite[Theorem 1.1]{Mil} (obtained for 
	Leibniz/Lie, Poisson, Lie-Yamaguti, and associative algebras, respectively),  
	but also naturally extends them to their graded counterparts.

\medskip	
\subsection{Algebras over an Operad}

	\begin{prop}\label{funct_com_to_p-alg}
		Let $\p$ be an operad. For any $\p$-algebra $\fa$ we obtain a functor $\fa  \otimes - : \mathrm{ComAlg} \to  \p$-Alg. 
	\end{prop}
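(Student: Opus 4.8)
The plan is to construct the $\p$-algebra structure on $\fa \otimes A$ for a commutative algebra $A$ by combining the operadic structure map $\gamma\colon \p \to \mathrm{End}_\fa$ with the multiplication of $A$, and then to check functoriality. The underlying vector space of the functor $\fa \otimes -$ applied to $A$ is simply the tensor product $\fa \otimes A$. To define the operation associated to $\mu \in \p(n)$, I would set
\begin{equation*}
	\widehat{\gamma}_n(\mu)\big((a_1 \otimes x_1), \ldots, (a_n \otimes x_n)\big) := \gamma_n(\mu)(a_1, \ldots, a_n) \otimes x_1 x_2 \cdots x_n,
\end{equation*}
for $a_i \in \fa$, $x_i \in A$, extended linearly; here the product $x_1 \cdots x_n$ is unambiguous because $A$ is commutative and associative. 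One must verify that this assignment $\mu \mapsto \widehat{\gamma}_n(\mu)$ is a well-defined morphism of operads $\p \to \mathrm{End}_{\fa \otimes A}$, i.e.\ that it respects the partial compositions $\circ_i$, the $\Sm_n$-actions, and the unit. Compatibility with $\circ_i$ reduces to the identity $\gamma_{m-1+n}(\mu \circ_i \nu) = \gamma_m(\mu) \circ_i \gamma_n(\nu)$ (which holds since $\gamma$ is an operad morphism) tensored with the observation that inserting the product $x_i \cdots x_{i+n-1}$ into the $i$-th slot of $x_1 \cdots x_{i-1} (\,\cdot\,) x_{i+n} \cdots x_{m-1+n}$ gives exactly $x_1 \cdots x_{m-1+n}$ — again using associativity and commutativity of $A$. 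Equivariance follows because permuting the inputs $a_{\sigma(1)} \otimes x_{\sigma(1)}, \ldots$ permutes both the $a$'s (matching $\gamma_n(\mu) \cdot \sigma = \gamma_n(\mu \cdot \sigma)$) and the $x$'s, and the monomial $x_{\sigma(1)} \cdots x_{\sigma(n)}$ equals $x_1 \cdots x_n$ by commutativity. The unit goes to $\mathrm{id}_{\fa \otimes A}$ since $\gamma_1(\mathbbm 1) = \mathrm{id}_\fa$.

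Next I would define the functor on morphisms: given a morphism $f\colon A \to B$ in $\mathrm{ComAlg}$, set $(\fa \otimes f) := \mathrm{id}_\fa \otimes f \colon \fa \otimes A \to \fa \otimes B$. This is a $\p$-algebra morphism because
\begin{equation*}
	(\mathrm{id}_\fa \otimes f)\big(\gamma_n(\mu)(a_1,\ldots,a_n) \otimes x_1 \cdots x_n\big) = \gamma_n(\mu)(a_1,\ldots,a_n) \otimes f(x_1)\cdots f(x_n),
\end{equation*}
using that $f$ is an algebra homomorphism, and the right-hand side is precisely $\widehat{\gamma}_n(\mu)$ applied to the images $a_i \otimes f(x_i)$ in $\fa \otimes B$. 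Preservation of identities and composition is immediate from the corresponding properties of the tensor product, so $\fa \otimes -$ is indeed a functor.

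The only genuinely delicate point — and the step I expect to require the most care in writing — is the verification of the associativity axiom for the partial compositions in the three separate cases of Definition \ref{partial_def}; keeping track of which monomial in the $x_i$'s appears in each nested composition, and confirming it always collapses to the single monomial $x_1 \cdots x_{m-1+n}$ (so that it matches on both sides), is the heart of the argument. Everything else is formal bookkeeping with the tensor product. I would also remark that the role of commutativity of $A$ is exactly what makes $\widehat{\gamma}_n(\mu)$ invariant under the $\Sm_n$-action on inputs, which is why the target of the functor is $\mathrm{ComAlg}$ rather than $\mathrm{Alg}$; this observation will matter for the enrichment discussion in Section \ref{sec-4}.
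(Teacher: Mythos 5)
Your proposal is correct and follows essentially the same route as the paper: define the structure map on elementary tensors by applying $\gamma_k(\mu)$ to the $\fa$-factors and multiplying the $A$-factors, verify compatibility with the partial compositions $\circ_i$, the $\Sm_n$-action (where commutativity of $A$ enters), and the unit, and send a morphism $f$ to $\mathrm{id}_\fa \otimes f$. One small simplification: since $\mathrm{End}_{\fa \otimes A}$ is already known to be an operad, the three-case associativity axiom of Definition \ref{partial_def} holds automatically there and need not be re-verified; the only thing to check is that $\mu \mapsto \widehat{\gamma}_n(\mu)$ is a morphism of operads, which is exactly the part you carried out.
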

	
	\begin{proof}
		Let $\gamma:\p \to \mathrm{End}_\fa$ be the $\p$-algebra structure on $\fa$. Now, for any $C \in \mathrm{ComAlg}$, we show that $\fa\otimes C$ is a $\p$-algebra. We define a map $\overline{\gamma}:\p\rightarrow \mathrm{End}_{\fa\otimes C}$ by 
		\begin{equation}\label{morph}
			\overline{\gamma}_k(\mu)(a_1\otimes c_1,\ldots,a_k\otimes c_k):=\gamma_k(\mu)(a_1,\ldots,a_k)\otimes c_1\cdots c_k,
		\end{equation}
		for all $\mu\in \p(k);~a_1,a_2,\ldots,a_k\in \fa;$ and $c_1,\ldots,c_k\in C$. To show that $\overline{\gamma}$ defines a $\p$ algebra structure on $\fa \otimes C$, we first establish the commutativity of the following diagram
		\begin{equation}\label{prop_diag1_P-Alg_tensor_ComAlg}
			\begin{tikzcd}
				{\p(k) \otimes \p(l)} & {\p(k-1+l)} \\
				{\mathrm{End}_{\mathfrak{a} \otimes C}(k) \otimes \mathrm{End}_{\mathfrak{a} \otimes C}(l)} & {\mathrm{End}_{\mathfrak{a} \otimes C}(k-1+l),}
				\arrow["{\circ_i}", from=1-1, to=1-2]
				\arrow["{\overline{\gamma}_k \otimes \overline{\gamma}_l}"', from=1-1, to=2-1]
				\arrow["{\overline{\gamma}_{k-1+l}}", from=1-2, to=2-2]
				\arrow["{\circ_i}", from=2-1, to=2-2]
			\end{tikzcd}
		\end{equation}
		for any $k,l \in \mathbb{N}$. Let $\mu \in \p(k)$ and $\nu \in \p(l)$. Then, we obtain the following expression.  
		\begin{align*}
			\overline{\gamma}&_{k-1+l}(\mu \circ_i \nu) (a_1 \otimes c_1, \ldots, a_{k-1+l} \otimes c_{k-1+l}) \\
			&\stackrel{(\ref{morph})}{=} 
			\gamma_{k-1+l} (\mu \circ_i \nu) (a_1,\ldots,a_{k-1+l}) \otimes c_1 \cdots c_{k-1+l} \\
			&\stackrel{(\ref{morphism_diagram})}{=}
			\gamma_k(\mu) \circ_i \gamma_l(\nu) (a_1,\ldots,a_{k-1+l}) \otimes c_1 \cdots c_{k-1+l} \\
			&\stackrel{(\ref{def_partial_comp_end_operad})}{=}\gamma_k(\mu)(a_1,\ldots,a_{i-1}, \gamma_l(\nu) (a_i,\ldots,a_{i+l-1}),a_{i+l},\ldots,a_{k-1+l}) \otimes c_1 \cdots c_{k-1+l} \\
			&\stackrel{(\ref{morph})}{=}
			\overline{\gamma}_k(\mu) (a_1 \otimes c_1, \ldots, a_{i-1} \otimes c_{i-1}, \gamma_l(\nu)(a_i,\ldots,a_{i+l-1}) \otimes c_{i} \cdots c_{i+l-1}, a_{i+l}\otimes c_{i+l}, \ldots, a_{k-1+l} \otimes c_{k-1+l}) \\
			&\stackrel{(\ref{morph})}{=}
			\overline{\gamma}_k(\mu)(a_1 \otimes c_1, \ldots, a_{i-1} \otimes c_{i-1}, \overline{\gamma}_l(\nu)(a_i \otimes c_i,\ldots,a_{i+l-1}\otimes c_{i+l-1}), a_{i+l}\otimes c_{i+l}, \ldots, a_{k-1+l} \otimes c_{k-1+l})\\
			&\stackrel{(\ref{def_partial_comp_end_operad})}{=}
			\overline{\gamma}_k(\mu) \circ_i \overline{\gamma}_l(\nu) (a_1 \otimes c_1, \ldots, a_{k-1+l} \otimes c_{k-1+l}), 
		\end{align*}
		for any $a_1,\ldots,a_{k-1+l} \in \fa$ and $c_1,\ldots,c_{k-1+l} \in C$. Therefore, Diagram \eqref{prop_diag1_P-Alg_tensor_ComAlg} commutes for any $k,l \in \mathbb{N}$. Furthermore, it is obvious that $\overline{\gamma}_1(1)=\mathrm{id}_{\fa\otimes C}$. Next, to obtain  $\overline{\gamma}_k(\mu\cdot\sigma)=\overline{\gamma}_k\cdot\sigma$ for any $\sigma\in S_k$, we do the following computation. Let $\mu \in \p(k)$ and $\sigma \in \Sm_k$,  
		\begin{eqnarray*}
			\overline{\gamma}_k(\mu) \cdot \sigma (a_1 \otimes c_1, \ldots, a_k \otimes c_k) 
			&\stackrel{(\ref{def_end_opd_sym_action})}{=}& \overline{\gamma}_k(\mu) (a_{\sigma(1)} \otimes c_{\sigma(1)}, \ldots, a_{\sigma(k)} \otimes c_{\sigma(k)}) \\
			&\stackrel{(\ref{morph})}{=}& \gamma_k(\mu)(a_{\sigma(1)},\ldots,a_{\sigma(k)}) \otimes c_{\sigma(1)} \cdots c_{\sigma(k)} \\
			(\text{Since} ~C \in \mathrm{ComAlg})
			&=& \gamma_k(\mu)(a_{\sigma(1)},\ldots,a_{\sigma(k)}) \otimes c_1 \cdots c_k \\
			&\stackrel{(\ref{morphism_diagram})}{=}&
			\gamma_k(\mu \cdot \sigma) (a_1,\ldots,a_k) \otimes c_1 \cdots c_k \\  		 
			&\stackrel{(\ref{morph})}{=}& 
			\overline{\gamma}_k(\mu \cdot \sigma)(a_1 \otimes c_1, \ldots, a_k \otimes c_k), 
		\end{eqnarray*}
		for any $a_1,\ldots,a_k \in \fa$ and $c_1,\ldots,c_k \in C$. Hence, $\fa\otimes C$ is a $\p$-algebra. 
		
		\medskip
		Next, we show that a morphism $f: C \to D$ in $\mathrm{ComAlg}$ induces a morphism $\mathfrak{a} \otimes f:\fa \otimes C \to \fa \otimes D$ in $\p$-Alg. We define it as follows
		\begin{equation}
			\fa\otimes{f}(a \otimes c):=a \otimes f(c),
		\end{equation}
		for any $a \in \fa$ and $c \in C$. Now, using Equation \eqref{morph}, we show that $\fa \otimes f: \fa \otimes C \to \fa \otimes D$ is a $\p$-algebra morphism. In other words, we prove the commutativity of the following diagram
		\begin{equation}
			\begin{tikzcd}
				{(\mathfrak{a} \otimes C)^{\otimes k}} & 	{\mathfrak{a} \otimes C} \\
				{(\mathfrak{a} \otimes D)^{\otimes k}} & 	{\mathfrak{a} \otimes D,}
				\arrow["{\overline{\gamma}_k(\mu)}", 	from=1-1, to=1-2]
				\arrow["{(\mathrm{id} \otimes f)^{\otimes k}}"', from=1-1, to=2-1]
				\arrow["{\mathrm{id} \otimes f}", 	from=1-2, to=2-2]
				\arrow["{\overline{\gamma}_k'(\mu)}", 	from=2-1, to=2-2]
			\end{tikzcd}
		\end{equation}
		for any $\mu \in \p(k)$. Let $a_1,\ldots,a_k \in \fa$ and $c_1,\ldots,c_k \in C$, then 
		\begin{eqnarray*}
			(\mathrm{id} \otimes f) \big(\overline{\gamma}_k(\mu) (a_{1} \otimes c_1, \ldots, a_k \otimes c_k)\big) 
			&\stackrel{(\ref{morph})}{=}& 
			(\mathrm{id} \otimes f) \big(\gamma_k(\mu)(a_1,\ldots,a_k) \otimes c_1 \cdots c_k\big) \\
			&=&
			\gamma_k(\mu)(a_1,\ldots,a_k) \otimes f(c_1 \cdots c_k) \\
			&=&
			\gamma_k(\mu)(a_1,\ldots,a_k) \otimes f(c_1)\cdots f(c_k) \\
			&\stackrel{(\ref{morph})}{=}& 
			\overline{\gamma}_k'(\mu) (a_1 \otimes f(c_1), \ldots, a_k \otimes f(c_k)) \\ 
			&=& 
			\overline{\gamma}_k'(\mu) \big((\mathrm{id} \otimes f)^{\otimes k}(a_1\otimes c_1, \ldots, a_k\otimes c_k)\big). 
		\end{eqnarray*}
		Thus, any morphism $f: C \to D$ in $\mathrm{ComAlg}$ induces a morphism $\mathfrak{a} \otimes f:\fa \otimes C \to \fa \otimes D$ in $\p$-Alg. 
		Hence, giving a functor $\fa \otimes - : \mathrm{ComAlg} \to \p$-Alg.
	\end{proof}

	\begin{theorem}\label{thm_left_adjoint}
		Let $\p$ be an operad and $\fa$ be a $\p$-algebra. Then, the functor $\fa\otimes -: \mathrm{ComAlg} \to \p$-Alg admits a left adjoint if and only if $\fa$ is finite-dimensional.
	\end{theorem}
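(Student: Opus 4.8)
The plan is to prove the two implications separately. The ``if'' direction carries the content: assuming $\fa$ is finite-dimensional, I would construct the left adjoint $\mathcal{C}(\fa,-)\colon\p\text{-Alg}\to\mathrm{ComAlg}$ explicitly as a quotient of a symmetric algebra and check the adjunction isomorphism by hand. For the ``only if'' direction I would argue formally: a functor admitting a left adjoint preserves all limits, and I would exhibit a product that $\fa\otimes-$ fails to preserve as soon as $\fa$ is infinite-dimensional.

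For the ``if'' direction, fix a basis $\{e_1,\dots,e_n\}$ of $\fa$ with dual basis $\{e_1^*,\dots,e_n^*\}\subseteq\fa^*$, let $\gamma$ be the $\p$-algebra structure of $\fa$, and record the structure constants $c^{i}_{j_1\cdots j_k}\in\K$ determined by $\gamma_k(\mu)(e_{j_1},\dots,e_{j_k})=\sum_{i}c^{i}_{j_1\cdots j_k}\,e_i$ for $\mu\in\p(k)$. Given a $\p$-algebra $\mathfrak{b}$, the first step is the linear-algebra observation that, because $\fa$ is finite-dimensional, there is an isomorphism
\[
\mathrm{Hom}_{\mathrm{Vect}}(\mathfrak{b},\fa\otimes C)\;\cong\;\mathrm{Hom}_{\mathrm{Vect}}(\fa^*\otimes\mathfrak{b},C)\;\cong\;\mathrm{Hom}_{\mathrm{ComAlg}}\!\big(S(\fa^*\otimes\mathfrak{b}),\,C\big),
\]
natural in the commutative algebra $C$: the first isomorphism comes from $\fa\otimes C\cong\mathrm{Hom}(\fa^*,C)$ together with the tensor--hom adjunction, and the second is the universal property of the symmetric algebra $S(\fa^*\otimes\mathfrak{b})$, viewed as the free commutative algebra on $\fa^*\otimes\mathfrak{b}$. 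Under this bijection an algebra map $\theta\colon S(\fa^*\otimes\mathfrak{b})\to C$ corresponds to the linear map $g_\theta\colon\mathfrak{b}\to\fa\otimes C$, $b\mapsto\sum_{i}e_i\otimes\theta(e_i^*\otimes b)$. The second step is to impose the $\p$-morphism condition: expanding
\[
g_\theta\big(\gamma^{\mathfrak{b}}_k(\mu)(b_1,\dots,b_k)\big)\;=\;\overline{\gamma}_k(\mu)\big(g_\theta(b_1),\dots,g_\theta(b_k)\big)
\]
with the help of formula \eqref{morph} for the induced structure $\overline{\gamma}$ on $\fa\otimes C$, and comparing the coefficients of the basis vectors $e_i$ on both sides, one finds that $g_\theta$ is a $\p$-algebra morphism if and only if $\theta$ vanishes on all the elements
\[
e_i^*\otimes\gamma^{\mathfrak{b}}_k(\mu)(b_1,\dots,b_k)\;-\;\sum_{j_1,\dots,j_k}c^{i}_{j_1\cdots j_k}\,(e_{j_1}^*\otimes b_1)\cdots(e_{j_k}^*\otimes b_k)\ \in\ S(\fa^*\otimes\mathfrak{b}),
\]
as $k$ ranges over $\mathbb{N}$, $\mu$ over $\p(k)$, $i$ over $\{1,\dots,n\}$, and $b_1,\dots,b_k$ over $\mathfrak{b}$ (these are the ``universal polynomials''; for $k=0$ the convention of empty products recovers the compatibility with nullary operations of $\p$). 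Let $I$ be the ideal generated by these elements and set $\mathcal{C}(\fa,\mathfrak{b}):=S(\fa^*\otimes\mathfrak{b})/I$. Then $\theta$ factors through $\mathcal{C}(\fa,\mathfrak{b})$ precisely when $g_\theta$ is a $\p$-algebra morphism, so the bijection above restricts to a natural isomorphism $\mathrm{Hom}_{\p\text{-Alg}}(\mathfrak{b},\fa\otimes C)\cong\mathrm{Hom}_{\mathrm{ComAlg}}(\mathcal{C}(\fa,\mathfrak{b}),C)$; functoriality of $\mathfrak{b}\mapsto\mathcal{C}(\fa,\mathfrak{b})$ and naturality in $\mathfrak{b}$ follow from the universal properties used, so $\mathcal{C}(\fa,-)$ is the sought left adjoint.

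For the ``only if'' direction, suppose $\fa\otimes-$ has a left adjoint; then it preserves all limits, in particular all products. Products in $\p\text{-Alg}$ and in $\mathrm{ComAlg}$ are computed on underlying vector spaces (both forgetful functors to $\mathrm{Vect}$ create limits), so for any index set $S$ the canonical comparison morphism
\[
\fa\otimes\K^{S}\;\longrightarrow\;\prod_{s\in S}\big(\fa\otimes\K\big)\;=\;\fa^{S},\qquad a\otimes(\lambda_s)_{s\in S}\;\longmapsto\;(\lambda_s a)_{s\in S},
\]
is necessarily an isomorphism. But every element of the image of this map is a function $S\to\fa$ whose values lie in a finite-dimensional subspace of $\fa$; taking $S$ to be a basis $B$ of $\fa$ and considering the inclusion $B\hookrightarrow\fa$ as an element of $\fa^{B}$ shows the map is not surjective unless $B$ is finite. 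Hence $\fa$ is finite-dimensional.

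The main obstacle is the bookkeeping in the first direction: one must verify that the displayed generators cut out \emph{exactly} the $\p$-algebra morphisms among all the linear maps $g_\theta$, simultaneously over every arity $k$ and every operation $\mu\in\p(k)$, and that the resulting family of bijections is genuinely natural in $C$, so that it is an adjunction rather than a pointwise bijection. Both points reduce to the universal property of $S(\fa^*\otimes\mathfrak{b})$ and the explicit formula \eqref{morph} for $\overline{\gamma}$, but care is needed because $\mathfrak{b}$ is not assumed finite-dimensional: one chooses a basis only on the $\fa$-side and keeps the arguments $b_1,\dots,b_k$ formal throughout.
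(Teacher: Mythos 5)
Your proposal is correct and follows essentially the same route as the paper: an explicit presentation of $\mathcal{C}(\fa,\mathfrak{b})$ as a free commutative algebra on the ``matrix coefficients'' modulo the ideal of universal polynomials encoding the $\p$-morphism condition, and, for the converse, limit preservation combined with the fact that products in $\p$-Alg are computed on underlying vector spaces. The only differences are presentational: you package the polynomial ring as $S(\fa^*\otimes\mathfrak{b})$ and obtain naturality in $C$ from the composite of the tensor--hom and free--forgetful adjunctions rather than by a direct check, and your converse spells out the failure of surjectivity of the comparison map $\fa\otimes\K^S\to\fa^S$, which the paper only cites.
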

	
	\begin{proof}
		($\impliedby$)
		We assume $\fa$ to be a finite-dimensional $\p$-algebra with a vector space basis $\{a_1,\ldots,a_n\}$. Let the $\p$-algebra structure on $\fa$ is given by $\gamma:\p\rightarrow \mathrm{End}_\fa$. Then for any $\mu \in \p(k)$ we obtain 
		\begin{equation}\label{struc_const_rel1}
			\gamma_k(\mu)(a_{i_1},\ldots,a_{i_k})
			~=~\sum_{s=1}^n \alpha^s_{\gamma_k(\mu),i_1,\ldots,i_k} a_s,
		\end{equation}
  		where $i_1,\ldots,i_k = 1,\ldots,n$. Therefore, the set of scalars $\{\alpha^s_{\gamma_k(\mu),i_1,\ldots,i_k}~|~s,i_1,\ldots,i_k=1,\ldots,n,~\mu\in \p(k)\}$ yields the structure constants of $\fa$.
		Now, for an arbitrary $\p$-algebra $\mathfrak{b}$, we construct a commutative algebra $\mathcal{C}(\fa,\mathfrak{b}) \in \mathrm{ComAlg}$.
		Let $\{b_i|~i\in I\}$ be a vector space basis of $\mathfrak{b}$. Assuming the $\p$-algebra structure on $\mathfrak{b}$ to be given $\widetilde{\gamma}:\p\rightarrow \mathrm{End}_{\mathfrak{b}}$, for any $\mu \in \p(k)$ we obtain 
		\begin{equation}\label{struc_const_rel2}
			\widetilde{\gamma}_k(\mu)(b_{i_1},\ldots,b_{i_k}) \quad =
			\sum_{u\in I_{\widetilde{\gamma}_k(\mu),i_1,\ldots,i_k}} \beta^u_{\widetilde{\gamma}_k(\mu),i_1,\ldots,i_k} b_u,
		\end{equation}
		where $i_1,\ldots,i_k \in I$ and $I_{\widetilde{\gamma}_k(\mu),i_1,i_2,\ldots,i_k}$ is a finite subset of $I$. Thus, the structure constants of $\mathfrak{b}$ is given by the set
		$\{\beta^u_{\widetilde{\gamma}_k(\mu),i_1,\ldots,i_k}|~i_1,\ldots,i_k\in I,~u\in I_{\widetilde{\gamma}_k(\mu),i_1,i_2,\ldots,i_k},~\mbox{and}~\mu\in \p(k)\}$.
		Let $\mathbb{K}\big[X_{si}|~s=1,2,\ldots,n, ~i\in I\big]$ be the polynomial algebra with indeterminates $X_{si}$. We define 
		\begin{equation}\label{C(a,b)}
			\mathcal{C}(\fa,\mathfrak{b}) := \mathbb{K}\big[X_{si}|~s=1,2,\ldots,n, ~i\in I\big] \big/ J,
		\end{equation}
		where $J$ is the ideal generated by the polynomials of the form
		\begin{equation}
			P^{(\fa,\mathfrak{b})}_{(\mu,a,i_1,\ldots,i_k)} ~:=
			\sum_{u\in I_{\widetilde{\gamma}_k(\mu),i_1,\ldots,i_k}} \beta^u_{\widetilde{\gamma}_k(\mu),i_1,\ldots,i_k} X_{au}
			\hspace{0.2cm} -
			\sum_{s_1,s_2,\ldots, s_k=1}^n \alpha^a_{\gamma_k(\mu),s_1,\ldots,s_k}X_{s_1 i_1}\cdots X_{s_k i_k},
		\end{equation}
		for any $\mu\in \p(k)$, $a=1,2,\ldots, n$, and $i_1,\ldots, i_k\in I$. Let $x_{si}$ denote the class $\widehat{X_{si}}$ of $X_{si}$ in the algebra $\mathcal{C}(\fa,\mathfrak{b})$. This gives us the following equality in  $\mathcal{C}(\fa,\mathfrak{b})$: 
		\begin{equation}\label{thm_rel_struc_const_a_b}
			\sum_{u\in I_{\widetilde{\gamma}_k(\mu),i_1,\ldots,i_k}} \beta^u_{\widetilde{\gamma}_k(\mu),i_1,\ldots,i_k} x_{au}
			\hspace{0.2cm} =~ 
			\sum_{s_1,s_2,\ldots, s_k=1}^n \alpha^a_{\gamma_k(\mu),s_1,\ldots,s_k}x_{s_1 i_1}\cdots x_{s_k i_k}, 
		\end{equation} 
		for any $\mu \in \p(k)$, $a =1,\ldots,n$, and  $i_1,\ldots,i_k \in I$.
		
		\bigskip
		Next, we define a map 
		$\eta_{\mathfrak{b}}:\mathfrak{b}\rightarrow \fa\otimes \mathcal{C}(\fa,\mathfrak{b})$ by 
		\begin{equation} \label{thm_def_eta_b}
			\eta_\mathfrak{b}(b_i):=\sum_{s=1}^n a_s \otimes x_{si}\quad \mbox{for any } i\in I,
		\end{equation}
		and show that $\eta_\mathfrak{b}$ is a $\p$-algebra morphism. That is, the following diagram commutes for any $\mu \in \p(k)$
		
		\begin{equation}
			\begin{tikzcd}
				{\mathfrak{b}^{\otimes k}} & {\mathfrak{b}} \\
				{(\fa \otimes \mathcal{C}(\fa,\mathfrak{b}))^{\otimes k}} & {\fa \otimes \mathcal{C}(\fa,\mathfrak{b}),}
				\arrow["{\widetilde{\gamma}_{k}(\mu)}", from=1-1, to=1-2]
				\arrow["{\eta_{\mathfrak{b}}^{\otimes k}}"', from=1-1, to=2-1]
				\arrow["{\eta_{\mathfrak{b}}}", from=1-2, to=2-2]
				\arrow["{\overline{\gamma}_{k}(\mu)}", from=2-1, to=2-2]
			\end{tikzcd}
		\end{equation}
		where the operad morphism $\overline{\gamma}: \p \to \mathrm{End}_{\fa \otimes \mathcal{C}(\fa,\mathfrak{b})}$ is induced from the operad morphism $\gamma: \p \to \mathrm{End}_\mathfrak{a}$ as defined in Equation (\ref{morph}). The commutativity of the above diagram follows from the computation given below. Let $\mu \in \p(k)$, and ${i_1},\ldots,{i_k} \in I$, then
		\begin{eqnarray*}
			\overline{\gamma}_k(\mu) \big(\eta_\mathfrak{b}^{\otimes k}(b_{i_1}, \ldots, b_{i_k})\big)
			&\stackrel{(\ref{thm_def_eta_b})}{=}&
			\overline{\gamma}_k(\mu) \Big(\sum_{s_1=1}^n a_{s_1} \otimes x_{s_1i_1}, \ldots, \sum_{s_k=1}^n a_{s_k} \otimes x_{s_ki_k}\Big) \\
			&=& 
			\sum_{s_1,\ldots,s_k=1}^n \gamma_k(\mu) (a_{s_1},\ldots,a_{s_k}) \otimes x_{s_1i_i} \cdots x_{s_ki_k} \\
			&\stackrel{(\ref{struc_const_rel1})}{=}&
			\sum_{i=1}^n ~a_i \hspace{0.2cm} \otimes \sum_{s_1,\ldots,s_k=1}^n \alpha^i_{\gamma_k(\mu),s_1,\ldots,s_k} x_{s_1i_i} \cdots x_{s_ki_k} \\
			&\stackrel{(\ref{thm_rel_struc_const_a_b})}{=}&\sum_{i=1}^n~ a_i \hspace{0.2cm} \otimes \sum_{u\in I_{\widetilde{\gamma}_k(\mu),i_1,\ldots,i_k}} \beta^u_{\widetilde{\gamma}_k(\mu),i_1,\ldots,i_k} x_{iu}\\
			&=&
			\sum_{u\in I_{\widetilde{\gamma}_k(\mu),i_1,\ldots,i_k}} \beta^u_{\widetilde{\gamma}_k(\mu),i_1,\ldots,i_k} \Big(\sum_{i=1}^n a_i \otimes x_{iu}\Big) \\
			&\stackrel{(\ref{thm_def_eta_b})}{=}&
			\sum_{u\in I_{\widetilde{\gamma}_k(\mu),i_1,\ldots,i_k}} \beta^u_{\widetilde{\gamma}_k(\mu),i_1,\ldots,i_k} ~\eta_\mathfrak{b}(b_u)\\
			&=&
			\eta_\mathfrak{b}~\Big(\sum_{u\in I_{\widetilde{\gamma}_k(\mu),i_1,\ldots,i_k}} \beta^u_{\widetilde{\gamma}_k(\mu),i_1,\ldots,i_k} ~b_u\Big) \\
			&\stackrel{(\ref{struc_const_rel2})}{=}&
			\eta_\mathfrak{b} \big(\widetilde{\gamma}_k(\mu)(b_{i_1}, \ldots, b_{i_k})\big).
  		\end{eqnarray*} 
		Thus, $\eta_{\mathfrak{b}}$ is $\p$-algebra morphism. Let $C \in \mathrm{ComAlg}$ and $f:\mathfrak{b}\rightarrow \fa\otimes C$ be a $\p$-algebra morphism, then we show that there exists a unique algebra morphism $\Phi: \mathcal{C}(\fa,\mathfrak{b})\rightarrow C$ for which the following diagram commutes
		
		\begin{equation}\label{universal prop}
			\begin{tikzcd}
				{\mathfrak{b }} & {\fa \otimes \mathcal{C}(\fa, \mathfrak{b})} \\
				& {\fa \otimes C.}
				\arrow["{\eta_\mathfrak{b}}", from=1-1, to=1-2]
				\arrow["f"', from=1-1, to=2-2]
				\arrow["{id \otimes \Phi}", from=1-2, to=2-2]
			\end{tikzcd}
		\end{equation}
		Let $\{c_{si}|~ s=1,2,\ldots,n, ~i\in I\}$ be a subset of $C$ such that for any $i \in I$, we have
		\begin{equation}
			f(b_i)=\sum_{s=1}^n a_s\otimes c_{si}. 
		\end{equation}
		Since $f:\mathfrak{b}\rightarrow \fa\otimes C$ is a morphism of $\p$-algebras, we get
		\begin{equation}\label{thm_morph_f}
			f\big(\widetilde{\gamma}_k(\mu)(b_{i_1},\ldots,b_{i_k})\big) ~=~ \overline{\gamma}_k(\mu) \big(f(b_{i_1}),\ldots,f(b_{i_k})\big), \mbox{ for any } ~\mu\in \p(k),~ i_1,\ldots,i_k \in I.
		\end{equation}
		Simplifying the left hand side of Equation (\ref{thm_morph_f}), 
		\begin{equation*}
			f\big(\widetilde{\gamma}_k(\mu)(b_{i_1},\ldots,b_{i_k})\big) ~=~ \sum_{s=1}^n a_s \otimes 
			\Big(\sum_{u\in I_{\widetilde{\gamma}_k(\mu),i_1,\ldots,i_k}} \beta^u_{\widetilde{\gamma}_k(\mu),i_1,\ldots,i_k} c_{su}\Big).
		\end{equation*}
		Similarly, simplifying the right hand side of Equation (\ref{thm_morph_f}), we obtain
		\begin{eqnarray*}
			\overline{\gamma}_k(\mu) \big(f(b_{i_1}),\ldots,f(b_{i_k})\big) 
			&=& 
			\sum_{s_1,\ldots,s_k=1}^n \gamma_{k}(\mu) (a_{s_1},\ldots,a_{s_k}) \otimes c_{s_1 i_1}\cdots~ c_{s_k i_k} \\
			&=& 
			\sum_{s=1}^n a_s \otimes ~\Big(\sum_{s_1,\ldots,s_k=1}^n \alpha^s_{\gamma_k(\mu),s_1,\ldots,s_k}~
			c_{s_1 i_1}\cdots~ c_{s_k i_k}\Big), ~\mbox{for any }~ i_1,\ldots,i_k \in I.
		\end{eqnarray*}
		 Therefore, the elements of the set $\{c_{si}|~ ~s=1,2,\ldots,n,~ i\in I\}$ satisfy the following equation 
		\begin{equation}\label{relations on csi}
			\sum_{u\in I_{\widetilde{\gamma}_k(\mu),i_1,\ldots,i_k}} \beta^u_{\widetilde{\gamma}_k(\mu),i_1,\ldots,i_k} c_{su}
			\hspace{0.2cm} =
			\sum_{s_1,\ldots, s_k=1}^n 	\alpha^s_{\gamma_k(\mu),s_1,\ldots,s_k}~c_{s_1 i_1}\cdots~ c_{s_k i_k},
		\end{equation}
		for any $\mu \in \p(k)$, $i_1,\ldots, i_k\in I$, and $s=1,2,\ldots,n$. By defining $\phi(X_{si})=c_{si}$, we obtain a unique algebra morphism $\phi: \mathbb{K}[X_{si}]\rightarrow C$. Equation \eqref{relations on csi} implies that $J \subseteq \mathrm{Ker}(\phi)$. Thus, inducing an algebra morphism on the quotient algebra, which we denote by the map ${\Phi}:\mathcal{C}(\fa,\mathfrak{b})\rightarrow C$. Note that
		\begin{equation}
			f(b_i)=\sum_{s=1}^n a_s\otimes c_{si}=\sum_{s=1}^n a_s\otimes \Phi(x_{si})=(\mathrm{id}\otimes \Phi)\Big(\sum_{s=1}^n a_s\otimes x_{si}\Big)=(\mathrm{id}\otimes \Phi)\circ \eta_\mathfrak{b}(b_i),\quad \mbox{for all }i\in I.
		\end{equation}
		Hence, Diagram \eqref{universal prop} commutes. One can show easily that the obtained map $\Phi$ is unique. Therefore, for any $\p$-algebra $\mathfrak{b}$ and commutative algebra $C$, we have the following bijection
		\begin{equation}\label{bijection1}
			\Omega_{\mathfrak{b},C}:\mathrm{Hom}_{\mathrm{ComAlg}}(\mathcal{C}(\fa,\mathfrak{b}),C)\longrightarrow \mathrm{Hom}_{\p\text{-Alg}}(\mathfrak{b},\fa\otimes C) \quad \mbox{defined by }~ \Omega_{\mathfrak{b},C}(\Phi)=(\mathrm{id}\otimes \Phi)\circ \eta_\mathfrak{b}.
		\end{equation}
		
		\medskip
		Now, one can verify that assigning to each $\p$-algebra $\mathfrak{b}$ the commutative algebra $\mathcal{C}(\fa,\mathfrak{b})$, defines a functor $$\mathcal{C}(\fa,-): \p\text{-Alg} \longrightarrow \mathrm{ComAlg}.$$ 
		Furthermore, for any $\p$-algebra $\mathfrak{b}^\prime$ with a $\p$-algebra morphism $\alpha:\mathfrak{b}\rightarrow \mathfrak{b}^\prime$ and for any commutative algebra $C$ with an algebra morphism $\beta:C\rightarrow C^\prime$, we obtain a commutative diagram as follows
		\begin{equation}
			\begin{CD}
				\mathrm{Hom}_{\mathrm{ComAlg}}(\mathcal{C}(\fa,\mathfrak{b}^\prime),C) @> \mathcal{C}(\alpha)^* >>
				\mathrm{Hom}_{\mathrm{ComAlg}}(\mathcal{C}(\fa,\mathfrak{b}),C) @> \beta_{*} >> \mathrm{Hom}_{\mathrm{ComAlg}}(\mathcal{C}(\fa,\mathfrak{b}),C^\prime)\\
				@V\Omega_{\mathfrak{b}^\prime,C} VV @V\Omega_{\mathfrak{b},C} VV @V\Omega_{\mathfrak{b},C^\prime} VV\\
				\mathrm{Hom}_{\p\text{-Alg}}(\mathfrak{b}^\prime,\fa\otimes C) @> \alpha^* >>
				\mathrm{Hom}_{\p\text{-Alg}}(\mathfrak{b},\fa\otimes C) @> \fa\otimes \beta_{*} >> \mathrm{Hom}_{\p\text{-Alg}}(\mathfrak{b},\fa\otimes C^\prime).
			\end{CD}
		\end{equation}
		Thus, $\Omega_{\mathfrak{b},C}$ is natural in both $\mathfrak{b}$ and $C$. Consequently, $\mathcal{C}(\fa,-):\p\text{-Alg}\rightarrow \mathrm{ComAlg}$ is left adjoint to the functor $\fa\otimes-:\mathrm{ComAlg} \to \p$-Alg. 
		
		\medskip\noindent
		($\implies$)
		Before proving the converse part, 
		we show that the category $\p$-Alg is closed under arbitrary product.
		Let $(\fa_i)_{i \in I}$ be a family of objects in $\p$-Alg with the $\p$-algebra structures on $\fa_i$ given by $\gamma^i: \p \to \mathrm{End}_{\fa_i}$. 
		Since the category $\mathrm{Vect}$ is closed under arbitrary product, we obtain the product of $(\fa_i)_{i \in I}$ in $\mathrm{Vect}$, given by 
		\begin{equation}\label{arb_product_P_alg}
			\prod_{i \in I} \fa_i := \big\{(a_i)_{i \in I} :a_i \in \fa_i\big\}. 
		\end{equation}
		along with projection maps $\pi_j: \prod_{i \in I} \fa_i \to \fa_j$ for each $j \in I$. 
		Now, to show that the product of $(\fa_i)_{i \in I}$ in $\p$-Alg is also given by the vector space $\prod_{i \in I} \fa_i$ we must show that the following things hold:
		\begin{itemize}
			\item There exists a $\p$-algebra structure on $\prod_{i \in I} \fa_i$. 
			\item Each projection maps $\pi_j: \prod_{i \in I} \fa_i \to \fa_j$ are $\p$-algebra morphisms. 
			\item For an arbitrary $\p$-algebra $\mathfrak{b}$ along with $\p$-algebra morphisms $\phi_j: \mathfrak{b} \to \fa_j$ for each $j \in I$, there exists a unique $\theta:\mathfrak{b}\to \prod_{i \in I}{\fa_i} $ such that the following diagram commutes for every $j\in I$.
			\begin{equation}\label{universality_prod}
				\begin{tikzcd}
				& {\mathfrak{b}} \\
				& {\prod_{i \in I} \mathfrak{a}_{i}} \\
				{\mathfrak{a}_j}
				\arrow["\theta", dashed, from=1-2, to=2-2]
				\arrow["{\phi_j}"', bend right=35, from=1-2, to=3-1]
				\arrow["{\pi_j}", from=2-2, to=3-1]
			\end{tikzcd}
		\end{equation} 
 		\end{itemize} 
		We note that the $\p$-algebra structure, given by $\gamma^i: \p \to \mathrm{End}_{\fa_i}$, on each $\fa_i$ induces a $\p$-algebra structure on $\prod_{i \in I } \fa_i$ which is denoted by $\psi:\p \to \mathrm{End}_{\prod_{i \in I} \fa_i}$ and is defined as follows:
		\begin{equation}\label{arb_product_P_alg_structure}
			\psi_k(\mu)\big((a^1_{i})_{i \in  I},\ldots,(a^k_{i})_{i \in I}\big)  
			:= 
			\big(\gamma^i_k(\mu)(a^1_{i},\ldots,a^k_{i})\big)_{i \in I}	\quad \text{ for all } \mu \in \p(k). 
		\end{equation}		 
		Thus, $\prod_{i \in I} \fa_i$ is an object in $\p$-Alg. 
		Next, to show that each projection map $\pi_j : \prod_{i \in I} \fa_i \to \fa_j$ is a $\p$-algebra morphism we show the commutativity the diagram given below: for each $k \in \mathbb{N}$ and $\mu \in \p(k)$
		\begin{equation}\label{proj_p_alg_mor_diag}
			\begin{tikzcd}
				{(\prod \fa_i)^{\otimes k}} & {\prod\fa_i} \\
				{\fa_j^{\otimes k}} & {\fa_j}
				\arrow["{\psi_k(\mu)}", from=1-1, to=1-2]
				\arrow["{\pi^{\otimes k}_j}"', from=1-1, to=2-1]
				\arrow["{\pi_j}", from=1-2, to=2-2]
				\arrow["{\gamma^{j}_k(\mu)}", from=2-1, to=2-2]
			\end{tikzcd}
		\end{equation}
		The commutativity of the above diagram follows from the following computation:
		\begin{eqnarray*}
			\big(\pi_j \circ \psi_k(\mu)\big)\big((a^1_{i})_{i \in I},\ldots,(a^k_{i})_{i \in I}\big) 
			&\stackrel{\eqref{proj_p_alg_mor_diag}}{=}& \pi_j\big(\big(\gamma^i_{k}(\mu)(a^1_{i},\ldots,a^k_{i})\big)_{i\in I}\big)\\
			&=& \gamma^j_{k}(\mu)\big(a^1_{j},\ldots,a^k_{j}\big)\\
			&=& \gamma^j_{k}(\mu)\big(\pi_j((a^1_{i})_{i\in I}),\ldots,\pi_j((a^k_{i})_{i \in I})\big)\\
			&=& \big(\gamma^j_{k}(\mu)\circ \pi_j^{\otimes k}\big)\big((a^1_{i})_{i \in I},\ldots,(a^k_{i})_{i \in I}\big).
		\end{eqnarray*}
		Hence, we obtain that each projection maps $\pi_j : \prod_{i \in I} \fa_i \to \fa_j$ are $\p$-algebra morphisms. 
		Finally, for an arbitrary $\p$-algebra $\mathfrak{b}$, given by $\beta :\p \to \text{End}_{\mathfrak{b}}$, along with $\p$-algebra morphism $\phi_{j} : \mathfrak{b}\to \fa_j$ for each $j \in I$, we show the existence of an unique $\p$-algebra morphism $\theta:\mathfrak{b}\to \prod_{i \in I}{\fa_i} $ for which Diagram \eqref{universality_prod} commutes for every $j\in I$.
		At first we note that since $\phi_j: \mathfrak{b} \to \mathfrak{a}_j$ is a $\p$-algebra morphism we have the following equality:
		\begin{equation}
			(\phi_j\circ\beta_k(\mu))(b^1,\ldots,b^k) ~=~ \gamma^j_{k}(\mu)(\phi_j(b^1),\ldots,\phi_j(b^k))
		\end{equation}
		for any $b^1,\cdots, b^k \in \mathfrak{b}$. We define a map $\theta: \mathfrak{b} \to \prod_{i \in I} \fa_i$ as follows:
		\begin{equation}
			\theta(b) := \big(\phi_i(b)\big)_{i \in I} \quad \text{for all } b \in \mathfrak{b}.  
		\end{equation}
		From the definition of $\theta$ it is clear that Diagram \eqref{universality_prod} commutes, since we have the following equality: 
		\begin{equation}
			(\pi_j\circ\theta)(b) ~=~ \pi_j((\phi_i(b))_{i \in I}) ~=~ \phi_j(b), \quad \text{for all } b \in \mathfrak{b}. 
		\end{equation}
		Thus, it only remains to show that $\theta: \mathfrak{b} \to \prod_{i \in I} \fa_i$ is $\p$-algebra morphism, which follows from the computation below: for any $b^1,\cdots,b^k\in \mathfrak{b}$
		\begin{eqnarray*}
			\theta \circ \beta_k(\mu)\big(b^1,\ldots,b^k\big)
			&=&\big(\phi_i\big(\beta_k(\mu)(b^1,\ldots,b^k)\big)\big)_{i\in I}\\
			&=&\big(\gamma^{i}_k(\mu)\big(\phi_i(b^1),\ldots,\phi_i(b^k)\big)\big)_{i\in I}\\
			&=&\psi_k(\mu)\big((\phi_i(b^1))_{i\in I},\ldots ,(\phi_i(b^k))_{i\in I}\big)\\
			&=&\psi_k(\mu)\big(\theta(b^1),\ldots,\theta(b^k)\big)\\
			&=&\psi_k(\mu)\circ\theta^{\otimes k}\big(b^1,\ldots,b^k\big).
		\end{eqnarray*}
		The uniqueness of $\theta$ is easy to verify. 
		Hence, we obtain that $\prod_{i \in I} \fa_i$ is the product of $\big(\fa_i\big)_{i \in I}$ is the $\p$-Alg.
		
		\medskip
		Now, following the arguments in the proof of \cite[Theorem 2.1]{Ag-Mil} we prove the converse part of this theorem. Assuming the functor $\fa \otimes-$ admits a left adjoint, we aim to show that $\fa$ is finite-dimensional. Since $\fa \otimes -$ admits a left adjoint it preserves all the limits that exists in the category $\mathrm{ComAlg}$ (cf. \cite[pg. 118]{MacLane}). Therefore, it preserves arbitrary products in $\mathrm{ComAlg}$. Furthermore, Equation \eqref{arb_product_P_alg} and \eqref{arb_product_P_alg_structure}, tells us that in $\p$-Alg products are constructed as the products of the underlying vector spaces. 
		Imposing the condition that $\fa \otimes -$ preserves the product of countable copies of the base field $\K$ in $\mathrm{ComAlg}$, we get $\fa$ is finite-dimensional. 
	\end{proof} 

	\begin{corollary}
		Let $\fa$ and $\mathfrak{b}$ be two $\p$-algebras with $\fa$ being finite-dimensional. Then the following map is bijective
		\begin{equation}
			\Omega:\mathrm{Hom}_{\mathrm{ComAlg}}(\mathcal{C}(\fa,\mathfrak{b}),\K)\longrightarrow \mathrm{Hom}_{\p\text{-Alg}}(\mathfrak{b},\fa) \quad \mbox{defined by }~ \Omega(\Phi)=(\mathrm{id}\otimes \Phi)\circ \eta_\mathfrak{b}.
		\end{equation}
	\end{corollary}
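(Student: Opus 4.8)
The assertion is precisely the $C=\K$ instance of the adjunction established in Theorem~\ref{thm_left_adjoint}, so the plan is to specialize that theorem and then transport along the canonical identification $\fa\otimes\K\cong\fa$. First I would invoke Theorem~\ref{thm_left_adjoint}: since $\fa$ is finite-dimensional, the functor $\fa\otimes-\colon\mathrm{ComAlg}\to\p\text{-Alg}$ admits the left adjoint $\mathcal{C}(\fa,-)$, and hence for every commutative algebra $C$ — in particular for $C=\K$ — the map $\Omega_{\mathfrak{b},C}$ of \eqref{bijection1}, sending $\Phi$ to $(\mathrm{id}\otimes\Phi)\circ\eta_\mathfrak{b}$, is a bijection $\mathrm{Hom}_{\mathrm{ComAlg}}(\mathcal{C}(\fa,\mathfrak{b}),C)\to\mathrm{Hom}_{\p\text{-Alg}}(\mathfrak{b},\fa\otimes C)$.

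Next I would record that the canonical linear isomorphism $\rho\colon\fa\otimes\K\to\fa$, $a\otimes\lambda\mapsto\lambda a$, is an isomorphism of $\p$-algebras. Using the description \eqref{morph} of the induced operad morphism $\overline{\gamma}\colon\p\to\mathrm{End}_{\fa\otimes\K}$, the fact that the product on $\K$ is just field multiplication, and the multilinearity of $\gamma_k(\mu)$, one checks for $\mu\in\p(k)$ that $\rho\big(\overline{\gamma}_k(\mu)(a_1\otimes\lambda_1,\dots,a_k\otimes\lambda_k)\big)=(\lambda_1\cdots\lambda_k)\,\gamma_k(\mu)(a_1,\dots,a_k)=\gamma_k(\mu)(\lambda_1 a_1,\dots,\lambda_k a_k)=\gamma_k(\mu)\big(\rho(a_1\otimes\lambda_1),\dots,\rho(a_k\otimes\lambda_k)\big)$, so that $\rho$ (and its inverse) is a $\p$-algebra morphism; consequently post-composition with $\rho$ yields a bijection $\rho_{*}\colon\mathrm{Hom}_{\p\text{-Alg}}(\mathfrak{b},\fa\otimes\K)\to\mathrm{Hom}_{\p\text{-Alg}}(\mathfrak{b},\fa)$.

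Then I would verify that $\Omega=\rho_{*}\circ\Omega_{\mathfrak{b},\K}$, i.e. that the formula $\Omega(\Phi)=(\mathrm{id}\otimes\Phi)\circ\eta_\mathfrak{b}$ in the statement is exactly the transported adjunction bijection under the tacit identification $\fa\otimes\K\cong\fa$. Indeed, for $\Phi\in\mathrm{Hom}_{\mathrm{ComAlg}}(\mathcal{C}(\fa,\mathfrak{b}),\K)$ and $i\in I$, using \eqref{thm_def_eta_b} one gets $\rho\big((\mathrm{id}\otimes\Phi)\circ\eta_\mathfrak{b}(b_i)\big)=\rho\big(\sum_{s=1}^{n} a_s\otimes\Phi(x_{si})\big)=\sum_{s=1}^{n}\Phi(x_{si})\,a_s$, which is the value at $b_i$ of the map denoted $\Omega(\Phi)$ in the corollary. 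Since a composite of bijections is a bijection, $\Omega$ is bijective.

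Nothing here is deep: the statement is a direct corollary, and the only point requiring (entirely routine) care is the verification that $\fa\otimes\K$ and $\fa$ agree as $\p$-algebras so that the map written in the corollary coincides on the nose with the specialization of $\Omega_{\mathfrak{b},C}$ at $C=\K$. One could alternatively skip $\rho$ altogether and simply note that $\K$ is the monoidal unit, so $\fa\otimes-$ evaluated at $\K$ is (naturally isomorphic to) the identity functor on $\fa$, and quote \eqref{bijection1} verbatim.
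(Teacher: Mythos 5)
Your proposal is correct and is exactly the argument the paper intends: the corollary is the specialization of the bijection $\Omega_{\mathfrak{b},C}$ from Equation \eqref{bijection1} at $C=\K$, combined with the canonical identification $\fa\otimes\K\cong\fa$ as $\p$-algebras (the paper leaves this routine transport implicit). Your extra care in checking that $\rho$ is a $\p$-algebra isomorphism and that $\Omega=\rho_*\circ\Omega_{\mathfrak{b},\K}$ only makes explicit what the paper takes for granted.
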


	\begin{definition}\label{def_universal_algebra}
		Let $\fa$ be a finite-dimensional $\mathcal{P}$-algebra with basis $\{a_1,\ldots,a_n\}$. We say that the commutative algebra $\mathcal{C}(\fa,\fa)$ (cf. Equation (\ref{C(a,b)})) is said to be the \textbf{universal algebra of $\fa$}, and is denoted by $\mathcal{C}(\fa)$. Recall that 
		\begin{equation}\label{C(a)}
			\mathcal{C}(\fa) := \mathbb{K}\big[X_{si}|~s,i=1,2,\ldots,n\big] \big/ J,
		\end{equation}
		where $J$ is the ideal generated by the polynomials of the form
		\begin{equation}\label{def_universal_polynomial_a}
			\p^{(\fa)}_{(\mu,a,i_1,\ldots,i_k)} := \sum_{u=1}^n \alpha^u_{\gamma_k(\mu),i_1,\ldots,i_k} X_{au}
			\hspace{0.2cm} - \sum_{s_1,s_2,\ldots, s_k=1}^n \alpha^a_{\gamma_k(\mu),s_1,\ldots,s_k}X_{s_1 i_1}\cdots X_{s_k i_k}
		\end{equation} 
		for any $k \in \mathbb{N}$ with $\mu \in \p(k)$ and $a,i_1,\ldots,i_k =1,\ldots,n$. The polynomials defined above are called the {\bf universal polynomials of $\fa$}. 
	\end{definition}

 	For any finite-dimensional $\p$-algebra $\fa$, we now state as a corollary the universal property of $\mathcal{C}(\fa)$, that was obtained in the proof of Theorem \ref{thm_left_adjoint}.

	\begin{corollary}\label{cor_univ_prop}
		Let $\fa$ be a finite-dimensional $\p$-algebra. Then for any commutative algebra $C$ and any $\p$-algebra morphism $f: \fa \to \fa \otimes C$, there exists a unique commutative algebra morphism $\Phi: \mathcal{C}(\fa) \to C$ such that $f = (\mathrm{id} \otimes \Phi) \circ \eta_{\fa}$, i.e., the following diagram is commutative: 
		\begin{equation}\label{cor_diag_universal_property}
			\begin{tikzcd}
				{\mathfrak{a}} & {\fa \otimes \mathcal{C}(\fa)} \\
				& {\fa \otimes C}
				\arrow["{\eta_\mathfrak{a}}", from=1-1, to=1-2]
				\arrow["f"', from=1-1, to=2-2]
				\arrow["{id \otimes \Phi}", from=1-2, to=2-2]
			\end{tikzcd}
		\end{equation}   
	\end{corollary}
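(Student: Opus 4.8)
The plan is to deduce this directly from (the proof of) Theorem~\ref{thm_left_adjoint}, specialized to the case $\mathfrak{b} = \fa$. Recall that in proving the ``if'' direction of that theorem we associated to every $\p$-algebra $\mathfrak{b}$ the commutative algebra $\mathcal{C}(\fa,\mathfrak{b})$ of Equation~\eqref{C(a,b)} together with the $\p$-algebra morphism $\eta_\mathfrak{b}: \mathfrak{b} \to \fa \otimes \mathcal{C}(\fa,\mathfrak{b})$ of Equation~\eqref{thm_def_eta_b}, and we showed that for every commutative algebra $C$ and every $\p$-algebra morphism $f: \mathfrak{b} \to \fa \otimes C$ there is a unique commutative algebra morphism $\Phi: \mathcal{C}(\fa,\mathfrak{b}) \to C$ making Diagram~\eqref{universal prop} commute, i.e.\ with $f = (\mathrm{id} \otimes \Phi) \circ \eta_\mathfrak{b}$. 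This is exactly the content of the bijection $\Omega_{\mathfrak{b},C}$ of Equation~\eqref{bijection1}.

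First I would set $\mathfrak{b} = \fa$. By Definition~\ref{def_universal_algebra} one has $\mathcal{C}(\fa) = \mathcal{C}(\fa,\fa)$, and writing $\eta_\fa$ for the structure map $\eta_\mathfrak{b}$ in this case, the universal property recorded in Diagram~\eqref{universal prop} becomes precisely Diagram~\eqref{cor_diag_universal_property}. Existence and uniqueness of $\Phi$ are then inherited verbatim: $\Phi$ is the map induced on the quotient $\mathcal{C}(\fa) = \K[X_{si} \mid s,i=1,\ldots,n]/J$ by the assignment $X_{si} \mapsto c_{si}$, where the scalars $c_{si} \in C$ are read off from $f(a_i) = \sum_{s=1}^n a_s \otimes c_{si}$, and one checks (as in Equation~\eqref{relations on csi}) that these scalars satisfy the defining relations of $J$, so the assignment descends to the quotient; equivalently $\Phi = \Omega_{\fa,C}^{-1}(f)$.

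I do not expect any genuine obstacle here: the corollary is merely the instance $\mathfrak{b} = \fa$ of a statement already established in full generality in Theorem~\ref{thm_left_adjoint}, recorded separately because $\mathcal{C}(\fa)$ is the object of primary interest in the paper. The only point worth flagging is that the construction underlying Theorem~\ref{thm_left_adjoint} uses the finite-dimensionality of $\fa$ (a finite basis $\{a_1,\ldots,a_n\}$ is needed to define the finitely-indexed family of indeterminates $X_{si}$), which is exactly the standing hypothesis assumed in the statement.
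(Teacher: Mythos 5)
Your proposal is correct and is exactly the paper's argument: the authors prove this corollary by citing the bijection $\Omega_{\mathfrak{b},C}$ of Equation~\eqref{bijection1} in the case $\mathfrak{b} = \fa$, which is precisely your specialization. The extra detail you give about recovering $\Phi$ from the scalars $c_{si}$ is a faithful unwinding of that same bijection, not a different route.
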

	
	\begin{proof}
		The proof follows directly from the bijection provided in Equation (\ref{bijection1}), for the case $\mathfrak{b} = \fa$. 
	\end{proof}
	
	\begin{remark}
	In the next section, we prove that the construction in Theorem \ref{thm_left_adjoint} yield an enrichment of the category of finite-dimensional $\mathcal{P}$-algebras over the category dual to the category $\mathrm{ComAlg}$ of commutative algebras. Subsequently, we obtain a natural universal coacting bi/Hopf algebra for any finite dimensional $\mathcal{P}$-algebra.
	\end{remark}

\medskip
\subsection{Algebras over a Graded Operad}\label{sub3.2}~~\\
	
	\vspace{-0.4cm}
	In this subsection, we present the construction of a universal graded algebra associated with an algebra over a graded operad. Our aim is to demonstrate that the results obtained for algebras over an operad extends in a natural way to the setting of graded operads.  
	
	\medskip
	Let $\mathcal{P}$ be a graded operad and $\mathfrak{A}$ be an arbitrary $\mathcal{P}$-algebra.  
	Recall that a {$\mathcal{P}$-algebra} structure on $\mathfrak{A}$ is given by a morphism of graded operads $\gamma: \mathcal{P} \to \mathrm{End}_\mathfrak{A}$. 
Similar to the proof of Proposition \ref{funct_com_to_p-alg}, one can verify that  for any $\mathfrak{A} \in \mathcal{P}$-Alg, the map $\mathfrak{A}  \otimes - : \mathrm{g}$-$\mathrm{ComAlg} \to  \mathcal{P}$-Alg defines a functor. In particular, if $C \in \mathrm{g}\text{-}\mathrm{ComAlg}$, then the $\mathcal{P}$-algebra structure on $\mathfrak{A} \otimes C$ is given by the graded operad morphism $\overline{\gamma}: \mathcal{P} \to \mathrm{End}_{\mathfrak{A}\otimes C}$, which is
	defined as 
	\begin{equation}\label{graded_case_gamma_bar}
		\overline{\gamma}_k(\mu)(a_{p_1i_1} \otimes c_{q_1j_1},\ldots,a_{p_ki_k} \otimes c_{q_kj_k}) ~:=~ 
		(-1)^{\varrho}~\gamma_{k}(\mu) (a_{p_1i_1}, \ldots, a_{p_ki_k}) \otimes c_{q_1j_1} \cdots c_{q_kj_k},
	\end{equation} 
	where $\varrho = (-1)^{p_kq_{k-1} + (p_{k-1} + p_k)q_{k-2} + \cdots + (p_2+\cdots+p_k)q_1}$ if $k \ge 2$, (the sign is obtained by following the Koszul sign convention) otherwise $\varrho = 0$. 
	
	\medskip
	Let $\mathfrak{A} = \oplus_{p \ge 0} \mathfrak{A}_p$  be a $\mathcal{P}$-algebra such that each component $\mathfrak{A}_p$ ($p \ge 0$) is finite-dimensional. 
	Then, we construct a universal graded algebra of $\mathfrak{A}$. Let  $$\{a_{01},\ldots,a_{0d_0};a_{11},\ldots,a_{1d_1};\ldots;a_{p1},\ldots,a_{pd_p};\ldots\}$$
	be a vector space basis of $\mathfrak{A}$, 
	where $\{a_{p1},\ldots,a_{p{d_p}}\}$ denotes a basis of the $p$-degree component $\mathfrak{A}_p$. If the $\mathcal{P}$-algebra structure on $\mathfrak{A}$ is given by $\gamma:\mathcal{P}\rightarrow \mathrm{End}_\mathfrak{A}$, then 
	\begin{equation}\label{struc_const_graded}
		\gamma_k(\mu)(a_{p_1i_1},\ldots,a_{p_ki_k})
		~=~\sum_{s=1}^{d_{\theta}} \alpha^{\theta,s }_{\gamma_k(\mu),p_1i_1,\ldots,p_ki_k} a_{\theta s}\quad \text{for any } \mu \in \mathcal{P}(k),
	\end{equation}
	where $\theta = p_1+\cdots+p_k+|\mu|$, $|\mu|$ denotes the degree of $\mu$ in the graded operad, $a_{p_ri_r} \in \mathfrak{A}_{p_r}$ for any $1 \le r \le k$. The set of scalars 
	\begin{equation*}
		\bigcup_{k \ge 1} \bigcup_{\mu \in \mathcal{P}(k)}\bigcup_{p_1,\ldots,p_k \ge 0}\Big\{
		\alpha^{\theta,s }_{\gamma_k(\mu),p_1i_1,\ldots,p_ki_k} ~\big|~ 1 \le i_r \le d_{p_r} \text{ for any } r=1,\ldots,k, \mbox{ and } 1 \le s \le d_{p_1 + \cdots + p_k + |\mu|}\Big\}
	\end{equation*}
 	yields the structure constants of $\mathfrak{A}$.
	Let $\mathbb{K}\left[X^{(\pi)}_{si}~\big|~s,i,\pi \ge 0 \right]$ be the graded commutative polynomial algebra, 
	with indeterminates $X^{(\pi)}_{si}$ of cohomological degree $\pi$. We define 
	\begin{equation}\label{C(A)}
		\mathcal{C}(\mathfrak{A}) := \mathbb{K}\big[X^{(\pi)}_{si}~\big|~s,i,\pi\geq 0 \big] \Big/ J,
	\end{equation}
	where $J$ is the ideal generated by graded polynomials (of cohomological degree $\omega \geq 0$) given by  
	\begin{multline}\label{graded universal poly}
		P^{(\mathfrak{A},\omega)}_{(\mu,a,p_1i_1,\ldots,p_ki_k)} 
		:=
		\sum_{u=1}^{d_{\theta}} \alpha^{\theta,u}_{\gamma_k(\mu),p_1i_1,\ldots,p_ki_k} ~X^{(\omega)}_{au}
		\\
		-
		\sum_{\substack{\varepsilon_1+\cdots+\varepsilon_k
		=\theta-\omega-|\mu|, \\ 0\leq \varepsilon_i \le p_i,~1\leq i\leq k.}}~
		\sum_{s_1=1}^{d_{p_1}} 
		\cdots 
		\sum_{s_k=1}^{d_{p_k}}	
		(-1)^\varrho
		\alpha^{\theta-\omega,a }_{\gamma_k(\mu),\varepsilon_1s_1,\ldots,\varepsilon_ks_k}X_{s_1 i_1}^{(p_1-\varepsilon_1)}\cdots X_{s_k i_k}^{(p_k-\varepsilon_k)},
	\end{multline}
	where $\mu \in \mathcal{P}(k)$, $p_1,\ldots,p_k \ge 0$, $\theta=p_1 + \cdots + p_k+|\mu|$, $0 \le \omega \le \theta$, $a=1,\ldots,d_{\theta-\omega}$, and $1\leq i_r\leq d_{p_r}$ for any $1\leq r\leq k$. Here, $\varrho$ in the second term of the right hand side of the expression \eqref{graded universal poly} is given by the sum 
	\begin{equation}\label{koszul_sign}
		\varrho =  \sum_{j=1}^{k-1}(\varepsilon_{j+1}+\cdots+\varepsilon_k)(p_j -\varepsilon_j) \mbox{ for any }k \ge 2,
	\end{equation}
	otherwise $\varrho = 0$. The polynomials in Equation \eqref{graded universal poly} are called the \textbf{universal graded polynomials of $\mathfrak{A}$}. We denote by $x^{(\pi)}_{si}$ the class of $X_{si}^{(\pi)}$ in the graded algebra $\mathcal{C}(\mathfrak{A})$.
	Next, we define a map 
	$\eta_{\mathfrak{A}}:\mathfrak{A}\rightarrow \mathfrak{A}\otimes \mathcal{C}(\mathfrak{A})$ by 
	\begin{equation} \label{def_eta_A}
		\eta_\mathfrak{A}(a_{pi}):=\sum_{\varepsilon =0}^p\sum_{s=1}^{d_\varepsilon} a_{\varepsilon s} \otimes x^{(p - \varepsilon)}_{si}\quad \mbox{for any } p \geq 0 ,~ i=1,2,\ldots,d_p.
	\end{equation}
	In the lemma below we show that the map $\eta_\mathfrak{A}: \mathfrak{A} \to \mathfrak{A} \otimes \mathcal{C}(\mathfrak{A})$ is a $\p$-algebra morphism. 
	
	\begin{lemma}\label{lemma_graded_eta_A}
		The map $\eta_\mathfrak{A}: \mathfrak{A} \to \mathfrak{A} \otimes \mathcal{C}(\mathfrak{A})$ is a $\mathcal{P}$-algebra morphism. 
	\end{lemma}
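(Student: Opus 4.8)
The plan is to verify directly that $\eta_\mathfrak{A}$ intertwines the $\mathcal{P}$-algebra structures on $\mathfrak{A}$ and on $\mathfrak{A}\otimes\mathcal{C}(\mathfrak{A})$; that is, to show that for every $k\ge 1$, every $\mu\in\mathcal{P}(k)$, and all basis elements $a_{p_1i_1},\ldots,a_{p_ki_k}$ of $\mathfrak{A}$, one has
\[
\overline{\gamma}_k(\mu)\bigl(\eta_\mathfrak{A}(a_{p_1i_1}),\ldots,\eta_\mathfrak{A}(a_{p_ki_k})\bigr) ~=~ \eta_\mathfrak{A}\bigl(\gamma_k(\mu)(a_{p_1i_1},\ldots,a_{p_ki_k})\bigr),
\]
where $\overline{\gamma}$ is the structure map of Equation \eqref{graded_case_gamma_bar}. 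Since everything in sight is multilinear, checking this on basis elements suffices. I would start from the left-hand side: expand each $\eta_\mathfrak{A}(a_{p_ri_r})$ using Equation \eqref{def_eta_A} as $\sum_{\varepsilon_r=0}^{p_r}\sum_{s_r=1}^{d_{\varepsilon_r}} a_{\varepsilon_r s_r}\otimes x^{(p_r-\varepsilon_r)}_{s_r i_r}$, substitute into $\overline{\gamma}_k(\mu)$, and pull the sums out; the definition \eqref{graded_case_gamma_bar} then produces a Koszul sign $(-1)^\varrho$ together with $\gamma_k(\mu)(a_{\varepsilon_1 s_1},\ldots,a_{\varepsilon_k s_k})\otimes x^{(p_1-\varepsilon_1)}_{s_1i_1}\cdots x^{(p_k-\varepsilon_k)}_{s_ki_k}$.

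The next step is to rewrite $\gamma_k(\mu)(a_{\varepsilon_1 s_1},\ldots,a_{\varepsilon_k s_k})$ via the structure-constant relation \eqref{struc_const_graded}: this equals $\sum_{a=1}^{d_{\theta'}}\alpha^{\theta',a}_{\gamma_k(\mu),\varepsilon_1 s_1,\ldots,\varepsilon_k s_k}\,a_{\theta' a}$ where $\theta' = \varepsilon_1+\cdots+\varepsilon_k+|\mu|$. Collecting the coefficient of each basis vector $a_{\theta' a}\otimes(\text{monomial in }x)$, the left-hand side becomes a double sum: an outer sum over the total degree and over $a$, and an inner sum over all $\varepsilon_1,\ldots,\varepsilon_k$ with $\varepsilon_i\le p_i$ and $\varepsilon_1+\cdots+\varepsilon_k$ fixed. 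Comparing with the right-hand side, $\eta_\mathfrak{A}(\gamma_k(\mu)(a_{p_1i_1},\ldots,a_{p_ki_k}))$, one first applies \eqref{struc_const_graded} to get $\sum_{u}\alpha^{\theta,u}_{\gamma_k(\mu),p_1i_1,\ldots,p_ki_k}a_{\theta u}$ with $\theta=p_1+\cdots+p_k+|\mu|$, and then applies \eqref{def_eta_A} to each $a_{\theta u}$, producing $\sum_{u}\alpha^{\theta,u}_{\gamma_k(\mu),p_1i_1,\ldots,p_ki_k}\sum_{\omega=0}^{\theta}\sum_{a=1}^{d_{\theta-\omega}} a_{(\theta-\omega)a}\otimes x^{(\omega)}_{au}$. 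The key point is that matching the coefficient of $a_{(\theta-\omega)a}\otimes(\cdots)$ on both sides is exactly the relation obtained by setting the universal graded polynomial $P^{(\mathfrak{A},\omega)}_{(\mu,a,p_1i_1,\ldots,p_ki_k)}$ of Equation \eqref{graded universal poly} to zero in the quotient $\mathcal{C}(\mathfrak{A})$ — indeed the two terms of \eqref{graded universal poly} are precisely the two sides of this comparison, with the index constraint $\varepsilon_1+\cdots+\varepsilon_k=\theta-\omega-|\mu|$ encoding "total internal degree of the $x$-monomial equals $\omega$''. So the identity in $\mathfrak{A}\otimes\mathcal{C}(\mathfrak{A})$ holds because it holds modulo $J$.

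The main obstacle I expect is bookkeeping of the Koszul signs: I must check that the sign $\varrho$ generated by $\overline{\gamma}$ in \eqref{graded_case_gamma_bar} when applied to the tensors $a_{\varepsilon_r s_r}\otimes x^{(p_r-\varepsilon_r)}_{s_ri_r}$ — which involves the degrees $\varepsilon_r$ of the $a$-factors and $p_r-\varepsilon_r$ of the $x$-factors — reduces exactly to the sign $\varrho=\sum_{j=1}^{k-1}(\varepsilon_{j+1}+\cdots+\varepsilon_k)(p_j-\varepsilon_j)$ appearing in \eqref{koszul_sign}. This is a matter of carefully applying the Koszul rule for moving the homogeneous elements $x^{(p_r-\varepsilon_r)}_{s_ri_r}$ past the $a_{\varepsilon_{r'}s_{r'}}$ to regroup $(\bigotimes_r a_{\varepsilon_r s_r})\otimes(\prod_r x^{(p_r-\varepsilon_r)}_{s_ri_r})$, together with the convention that $|\mu|$ sits to the left; one also has to confirm that the degree $|x^{(\pi)}_{si}|=\pi$ is consistent with $\theta' = \theta-\omega$ where $\omega = \sum_r(p_r-\varepsilon_r)$, i.e. $\theta-\omega-|\mu|=\sum_r\varepsilon_r$, matching the summation constraint in \eqref{graded universal poly}. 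Once the sign identity is confirmed, the proof closes by reading off the two sides term-by-term and invoking Equation \eqref{graded universal poly} in $\mathcal{C}(\mathfrak{A})$; the non-graded case (Lemma-free, already done in Theorem \ref{thm_left_adjoint}) serves as a template, with all signs equal to $1$.
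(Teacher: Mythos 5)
Your proposal is correct and follows essentially the same route as the paper's proof: expand $\eta_\mathfrak{A}$ on basis elements via Equation \eqref{def_eta_A}, apply $\overline{\gamma}_k(\mu)$ using \eqref{graded_case_gamma_bar} and the structure constants \eqref{struc_const_graded}, regroup the sums by total degree, and identify the coefficient matching with the vanishing of the universal graded polynomials \eqref{graded universal poly} in the quotient $\mathcal{C}(\mathfrak{A})$. The Koszul-sign bookkeeping and the degree constraint $\varepsilon_1+\cdots+\varepsilon_k=\theta-\omega-|\mu|$ you flag as the main obstacles are exactly the points the paper's computation handles, so nothing essential is missing.
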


	\begin{proof}
		For any $k \ge 1$, let $a_{p_1i_1},\ldots,a_{p_ki_k} \in \mathfrak{A}$ of degree $p_1,\ldots,p_k$, respectively. To show that $\eta_\mathfrak{A}$ is a $\mathcal{P}$-algebra morphism, one needs to verify the commutativity of the following diagram for all $\mu \in \mathcal{P}(k)$.
		\begin{equation}
			\begin{tikzcd}
				{\mathfrak{A}^{\otimes k}} & {\mathfrak{A}} \\
				{(\mathfrak{A} \otimes \mathcal{C}(\mathfrak{A}))^{\otimes k}} & {\mathfrak{A} \otimes \mathcal{C}(\mathfrak{A}).}
				\arrow["{\gamma_{k}(\mu)}", from=1-1, to=1-2]
				\arrow["{\eta_{\mathfrak{A}}^{\otimes k}}"', from=1-1, to=2-1]
				\arrow["{\eta_{\mathfrak{A}}}", from=1-2, to=2-2]
				\arrow["{\overline{\gamma}_{k}(\mu)}", from=2-1, to=2-2]
			\end{tikzcd}
		\end{equation}
		Commutativity of the above diagram is obtained from the following computation:
		\begin{eqnarray*}
			\overline{\gamma}_k(\mu)
			\hspace{-0.7cm}&~&\hspace{-0.7cm} 
			\big(\eta_\mathfrak{A}(a_{p_1i_1}), \ldots
			,\eta_\mathfrak{A}(a_{p_ki_k})\big) \\
			&\stackrel{\eqref{def_eta_A}}{=}& 
			\overline{\gamma}_k(\mu) 
			\Big(\sum_{\varepsilon_1 = 0}^{p_1}\sum_{s_1=1}^{d_{\varepsilon_1}} a_{\varepsilon_1 s_1} \otimes x_{s_1i_1}^{(p_1-\varepsilon_1)}
			,\ldots, 
			\sum_{\varepsilon_k = 0}^{p_k}\sum_{s_k=1}^{d_{\varepsilon_k}} a_{\varepsilon_k s_k} \otimes x_{s_ki_k}^{(p_k-\varepsilon_k)}\Big) \\
			&\stackrel{\eqref{graded_case_gamma_bar}}{=}& 
			\sum_{\varepsilon_1=0}^{p_1} \cdots \sum_{\varepsilon_k=0}^{p_k}
			\sum_{s_1=1}^{d_{p_1}} \cdots 
			\sum_{s_k=1}^{d_{p_k}} 
			(-1)^{\varrho} \gamma_k(\mu) (a_{\varepsilon_1s_1},\ldots,a_{\varepsilon_ks_k}) 
			\otimes 
			x_{s_1i_1}^{(p_1-\varepsilon_1)}
			\cdots 
			~x_{s_ki_k}^{(p_k-\varepsilon_k)} \\
			&\stackrel{\eqref{struc_const_graded}}{=}& 
			\sum_{\varepsilon_1=0}^{p_1} \cdots \sum_{\varepsilon_k=0}^{p_k}
			\sum_{s_1=1}^{d_{p_1}} \cdots 
			\sum_{s_k=1}^{d_{p_k}}
			\sum_{r=1}^{d_{\varepsilon_1 + \cdots + \varepsilon_k + |\mu|}} 
			(-1)^\varrho
			\alpha_{\gamma_k(\mu),\varepsilon_1s_1,\ldots,\varepsilon_ks_k}^{\varepsilon_1 + \cdots + \varepsilon_k + |\mu|,r} 
			a_{(\varepsilon_1 + \cdots + \varepsilon_k + |\mu|)r}
			\otimes
			x_{s_1i_1}^{(p_1-\varepsilon_1)}
			\cdots 
			~x_{s_ki_k}^{(p_k-\varepsilon_k)} \\
			&=& 
			\sum_{\lambda = |\mu|}^{p_1+\cdots+p_k+|\mu|}
			\sum_{\substack{\varepsilon_1 + \cdots + \varepsilon_k = \lambda - |\mu|,\\
			0 \le \varepsilon_i \le p_i,~
			1 \le i \le k.}} 
			~\sum_{s_1=1}^{d_{p_1}} \cdots 
			\sum_{s_k=1}^{d_{p_k}}
			\sum_{r=1}^{d_{\lambda}} 
			(-1)^\varrho
			\alpha_{\gamma_k(\mu),\varepsilon_1s_1,\ldots,\varepsilon_ks_k}^{\lambda, r}
			a_{\lambda r}
			\otimes 
			x_{s_1i_1}^{(p_1-\varepsilon_1)}
			\cdots 
			~x_{s_ki_k}^{(p_k-\varepsilon_k)} \\
			&=& 
			\sum_{\lambda=|\mu|}^{p_1+\cdots+p_k+|\mu|}
			\sum_{r=1}^{d_{\lambda}}
			a_{\lambda r}~
			\otimes
			\sum_{\substack{\varepsilon_1 + \cdots		+\varepsilon_k = \lambda - |\mu|,\\
			0 \le \varepsilon_i \le p_i,~
			1 \le i \le k.}} 
			~\sum_{s_1=1}^{d_{p_1}} \cdots 
			\sum_{s_k=1}^{d_{p_k}}
			(-1)^\varrho
			\alpha_{\gamma_k(\mu),\varepsilon_1s_1,\ldots,\varepsilon_ks_k}^{\lambda, r}
			x_{s_1i_1}^{(p_1-\varepsilon_1)}
			\cdots 
			~x_{s_ki_k}^{(p_k-\varepsilon_k)} \\
			&\stackrel{\eqref{graded universal poly}}{=}&
			\sum_{\lambda = |\mu|}^{\theta}
			\sum_{r=1}^{d_{\lambda}}
			a_{\lambda r}
			~\otimes~
			\sum_{u=1}^{d_{\theta}} \alpha_{\gamma_k(\mu),p_1i_1,\ldots,p_ki_k}^{\theta,u} x_{ru}^{(\theta-\lambda)}   
			\\
			&=& 
			\sum_{\lambda = 0}^{\theta}
			\sum_{r=1}^{d_{\lambda}}
			a_{\lambda r}
			\otimes 
			\sum_{u=1}^{d_{\theta}} \alpha_{\gamma_k(\mu),p_1i_1,\ldots,p_ki_k}^{\theta,u} x_{ru}^{(\theta-\lambda)}
			\quad 
			\Big(\sum_{u=1}^{d_{\theta}} \alpha^{\theta,u}_{\gamma_k(\mu),p_1i_1,\ldots,p_ki_k} ~x^{( \theta-\lambda)}_{ru}=0,~ \mbox{for any }~ 0\leq \lambda \leq |\mu|-1 \Big)\\
			&=& 
			\sum_{u = 1}^{d_{\theta}} \alpha_{\gamma_k(\mu),p_1i_1,\ldots,p_ki_k}^{\theta,u}
			\Big(
			\sum_{\lambda = 0}^{\theta}
			\sum_{r=1}^{d_{\lambda}}
			a_{\lambda r}
			\otimes 
			x_{ru}^{(\theta-\lambda)}
			\Big) \\
			&\stackrel{\eqref{def_eta_A}}{=}& 
			\sum_{u = 1}^{d_{\theta}} \alpha_{\gamma_k(\mu),p_1i_1,\ldots,p_ki_k}^{\theta,u}
			~\eta_{\mathfrak{A}}\big(a_{\theta u}\big) \qquad 
			\\
			&=& 
			\eta_\mathfrak{A}
			\Big(\sum_{u = 1}^{d_{\theta}} \alpha_{\gamma_k(\mu),p_1i_1,\ldots,p_ki_k}^{\theta,u} ~a_{\theta u}\Big) 
			~\stackrel{\eqref{struc_const_graded}}{=}~
			\eta_\mathfrak{A} \big(\gamma_k(\mu)(a_{p_1i_1},\ldots,a_{p_ki_k})\big).
		\end{eqnarray*}
		Hence, the map $\eta_\mathfrak{A}:\mathfrak{A} \to \mathfrak{A} \otimes \mathcal{C}(\mathfrak{A})$ is a $\p$-algebra morphism. 
	\end{proof}
	
	Next, in the proposition below we show that the graded algebra $\mathcal{C}(\mathfrak{A})$ satisfies the following universal property. Thus, making $\mathcal{C}(\mathfrak{A})$ the \textbf{universal graded algebra of $\mathfrak{A}$}.
	
	\begin{prop}\label{Thm_univ_prop_graded_algebras}
		For any graded commutative algebra $C$ and a $\mathcal{P}$-algebra morphism $f: \mathfrak{A} \to \mathfrak{A} \otimes C$, there exists a unique graded commutative algebra morphism $\Phi: \mathcal{C}(\mathfrak{A}) \to C$ such that $f = (\mathrm{id} \otimes \Phi) \circ \eta_{\mathfrak{A}}$. In other words, the following diagram commutes. 
		\begin{equation}\label{Com-diag-graded-case}
			\begin{tikzcd}
				{\mathfrak{A}} & {\mathfrak{A} \otimes \mathcal{C}(\mathfrak{A})} \\
				& {\mathfrak{A} \otimes C.}
				\arrow["{\eta_\mathfrak{A}}", from=1-1, to=1-2]
				\arrow["f"', from=1-1, to=2-2]
				\arrow["{\mathrm{id} \otimes \Phi}", from=1-2, to=2-2]
			\end{tikzcd}
		\end{equation}
	\end{prop}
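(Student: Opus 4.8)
The plan is to follow the \textquotedblleft if\textquotedblright{} direction of Theorem \ref{thm_left_adjoint}, adapting it to the graded setting with the Koszul signs handled exactly as in Lemma \ref{lemma_graded_eta_A}. Let $C$ be a graded commutative algebra and $f\colon\mathfrak{A}\to\mathfrak{A}\otimes C$ a $\mathcal{P}$-algebra morphism. Since $f$ is degree-preserving and $a_{pi}$ is homogeneous of degree $p$, there exist homogeneous elements $c^{(\pi)}_{si}\in C$ of degree $\pi$ with
\begin{equation*}
	f(a_{pi})=\sum_{\varepsilon=0}^{p}\sum_{s=1}^{d_\varepsilon} a_{\varepsilon s}\otimes c^{(p-\varepsilon)}_{si},\qquad p\ge 0,\ 1\le i\le d_p,
\end{equation*}
which is the natural analogue of the expansion $f(b_i)=\sum a_s\otimes c_{si}$ used in Theorem \ref{thm_left_adjoint} (summands with $s>d_\varepsilon$ are taken to be $0$).

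First I would extract the defining relations of the ideal $J$ of \eqref{C(A)}. Applying the identity $f\circ\gamma_k(\mu)=\overline{\gamma}_k(\mu)\circ f^{\otimes k}$ to a tuple of basis vectors $(a_{p_1i_1},\dots,a_{p_ki_k})$, I would expand the right-hand side via \eqref{graded_case_gamma_bar} and \eqref{struc_const_graded} and the left-hand side via \eqref{struc_const_graded}; this reproduces, verbatim, the chain of equalities in the proof of Lemma \ref{lemma_graded_eta_A} with every $x^{(\cdot)}_{\cdot\cdot}$ replaced by the corresponding $c^{(\cdot)}_{\cdot\cdot}$ and the self-referential final steps omitted. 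Writing $\theta=p_1+\cdots+p_k+|\mu|$ and comparing, for each $0\le\lambda\le\theta$ and each $1\le r\le d_\lambda$, the coefficient of the basis vector $a_{\lambda r}$ on the two sides yields exactly the relation obtained by setting $\omega=\theta-\lambda$ and $a=r$ in the universal graded polynomial $P^{(\mathfrak{A},\omega)}_{(\mu,a,p_1i_1,\dots,p_ki_k)}$ of \eqref{graded universal poly} and substituting $X^{(\pi)}_{st}\mapsto c^{(\pi)}_{st}$. Hence the $c^{(\pi)}_{si}$ satisfy all the relations generating $J$.

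Consequently, the degree-preserving assignment $X^{(\pi)}_{si}\mapsto c^{(\pi)}_{si}$ extends to a morphism of graded commutative algebras $\mathbb{K}\bigl[X^{(\pi)}_{si}\bigr]\to C$ that kills $J$, hence descends to a graded algebra morphism $\Phi\colon\mathcal{C}(\mathfrak{A})\to C$ with $\Phi(x^{(\pi)}_{si})=c^{(\pi)}_{si}$. Plugging this into \eqref{def_eta_A} gives $(\mathrm{id}\otimes\Phi)\circ\eta_{\mathfrak{A}}(a_{pi})=\sum_{\varepsilon,s} a_{\varepsilon s}\otimes c^{(p-\varepsilon)}_{si}=f(a_{pi})$, so Diagram \eqref{Com-diag-graded-case} commutes. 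For uniqueness, any $\Phi'$ with $f=(\mathrm{id}\otimes\Phi')\circ\eta_{\mathfrak{A}}$ must, by linear independence of the $a_{\varepsilon s}$, send $x^{(p-\varepsilon)}_{si}$ to $c^{(p-\varepsilon)}_{si}$, and the classes $x^{(\pi)}_{si}$ generate $\mathcal{C}(\mathfrak{A})$ as an algebra, so $\Phi'=\Phi$. The only genuine obstacle is the sign bookkeeping in the middle step — checking that the sign $\varrho$ of \eqref{koszul_sign} is precisely the one produced by repeatedly pulling $\gamma_k(\mu)$ through \eqref{graded_case_gamma_bar} — but this is exactly the Koszul-sign computation already carried out inside Lemma \ref{lemma_graded_eta_A}, so it transfers without change.
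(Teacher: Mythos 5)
Your proposal is correct and follows essentially the same route as the paper's proof: expand $f(a_{pi})$ in homogeneous components $c^{(p-\varepsilon)}_{si}$, use the morphism identity $f\circ\gamma_k(\mu)=\overline{\gamma}_k(\mu)\circ f^{\otimes k}$ to deduce that these elements satisfy the relations generating $J$ (the paper's Equation \eqref{relations_graded_on_csi}, which is your substitution $X^{(\pi)}_{st}\mapsto c^{(\pi)}_{st}$ into \eqref{graded universal poly} with $\omega=\theta-\lambda$), and then descend $X^{(\pi)}_{si}\mapsto c^{(\pi)}_{si}$ to the quotient. Your treatment of uniqueness via linear independence of the $a_{\varepsilon s}$ and the generation of $\mathcal{C}(\mathfrak{A})$ by the classes $x^{(\pi)}_{si}$ is the standard argument the paper leaves implicit.
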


	\begin{proof}		
		For any homogeneous element $a_{pi} \in \mathfrak{A}$, there exists a finite subset $\cup_{0 \le \varepsilon \le p}\big\{c^{(p-\varepsilon)}_{si} \in C_{p-\varepsilon}~\big|~ s = 1 ,\ldots,d_{\varepsilon}\big\} \subset C$ such that the following equation holds: 
		\begin{equation}
		f(a_{p i}) =
		\sum_{\varepsilon =0}^p
		\sum_{s=1}^{d_\varepsilon} 
		a_{\varepsilon s} 
		\otimes 
		c^{(p - \varepsilon )}_{si}.
		\end{equation}
		Now, since the map $f:\mathfrak{A}\rightarrow \mathfrak{A}\otimes C$ is a graded $\mathcal{P}$-algebra morphism, for any homogeneous elements $a_{p_1i_1},\ldots,a_{p_ki_k} \in \mathfrak{A}$, and $\mu \in \mathcal{P}(k)$ we obtain,
		\begin{equation*}
			f\big(\gamma_k(\mu)(a_{p_1i_1},\ldots,a_{p_ki_k})\big) ~=~ \gamma_k(\mu)\big(f(a_{p_1i_1}),\ldots,f(a_{p_ki_k})\big).
		\end{equation*} 
		Evaluating the above equation we get 
		\begin{equation}\label{relations_graded_on_csi}
			\sum_{u=1}^{d_\theta} \alpha^{\theta,u}_{\gamma_k(\mu),p_1i_1,\ldots,p_ki_k} c^{(\theta - \lambda)}_{au}
		\hspace{0.2cm} =
		\sum_{\substack{\varepsilon_1+\cdots+\varepsilon_k=\lambda-|\mu|, \\ 0\leq \varepsilon_i \le p_i,~1\leq i\leq k.}}~
		\sum_{s_1=1}^{d_{p_1}}\cdots
		\sum_{s_k=1}^{d_{p_k}} 	
		(-1)^\varrho \alpha^{\lambda,a }_{\gamma_k(\mu),\varepsilon_1s_1,\ldots,\varepsilon_ks_k}c_{s_1 i_1}^{(p_1-\varepsilon_1)}\cdots c_{s_k i_k}^{(p_k-\varepsilon_k)},
		\end{equation}
		where $\theta=p_1 + \cdots + p_k+|\mu|$, $0 \le \lambda \le \theta$, and $a=1,\ldots,d_\theta$. Subsequently, the association $x^{(\pi)}_{si}\mapsto c^{(\pi)}_{si}$ induces a unique graded commutative algebra morphism ${\Phi}:\mathcal{C}(\mathfrak{A})\rightarrow C$ for which Diagram \eqref{Com-diag-graded-case} commutes. Thereby, the proof is complete. 
	\end{proof}

 We now prove the Theorem \ref{thm_left_adjoint} in the graded context. 
	 
	\begin{theorem}\label{thm_graded_left_adjoint}
		Let $\p$ be a graded operad and $\mathfrak{A}=\oplus_{p \ge 0}\mathfrak{A}_p \in \p$-algebra. The functor $\mathfrak{A}\otimes -: \mathrm{g}$-$\mathrm{ComAlg} \to \mathcal{P}$-Alg admits a left adjoint if and only if $\mathfrak{A}_p$ is finite-dimensional for each $p \ge 0$.
	\end{theorem}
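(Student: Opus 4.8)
The plan is to mirror, almost line for line, the proof of Theorem~\ref{thm_left_adjoint}, with the graded constructions of Subsection~\ref{sub3.2} playing the role of their ungraded counterparts. For the implication $(\Longleftarrow)$, assume each $\mathfrak{A}_p$ is finite-dimensional. The objects $\mathcal{C}(\mathfrak{A})$, $\eta_\mathfrak{A}$, and the statements of Lemma~\ref{lemma_graded_eta_A} and Proposition~\ref{Thm_univ_prop_graded_algebras} are precisely the ``$\mathfrak{B}=\mathfrak{A}$'' instance of what is needed, so the first step is to upgrade them to an arbitrary $\p$-algebra $\mathfrak{B}$ (whose homogeneous components need \emph{not} be finite-dimensional). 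Concretely, I would fix a homogeneous basis $\{a_{ps}\}$ of $\mathfrak{A}$ (finite in each degree $p$) and a homogeneous basis $\{b_{qj}\}_{j\in I_q}$ of $\mathfrak{B}$, record the structure constants $\beta$ of $\mathfrak{B}$ as in \eqref{struc_const_graded}, form the graded commutative polynomial algebra on indeterminates $X^{(\pi)}_{sj}$ (with $s$ ranging over the basis of $\mathfrak{A}$, $j$ over the basis of $\mathfrak{B}$, $\pi\ge 0$ the cohomological degree), and set $\mathcal{C}(\mathfrak{A},\mathfrak{B})$ to be its quotient by the \emph{graded} ideal generated by the evident analogue of the polynomials \eqref{graded universal poly}, with the $\alpha$'s coming from $\mathfrak{A}$ and the linear slot fed by $\mathfrak{B}$. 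Defining $\eta_\mathfrak{B}\colon \mathfrak{B}\to \mathfrak{A}\otimes\mathcal{C}(\mathfrak{A},\mathfrak{B})$ by the formula of \eqref{def_eta_A} (with $b_{qj}$ on the left), the proof that $\eta_\mathfrak{B}$ is a $\p$-algebra morphism and the proof of the universal property are the \emph{same} computations as in Lemma~\ref{lemma_graded_eta_A} and Proposition~\ref{Thm_univ_prop_graded_algebras}, with $\beta$ in place of $\alpha$ in the linear term; I would indicate that these go through verbatim.

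Granting this, the second step is formal. One obtains, for every graded commutative algebra $C$, a bijection
\[
\Omega_{\mathfrak{B},C}\colon \mathrm{Hom}_{\mathrm{g}\text{-}\mathrm{ComAlg}}\big(\mathcal{C}(\mathfrak{A},\mathfrak{B}),C\big)\ \longrightarrow\ \mathrm{Hom}_{\p\text{-Alg}}\big(\mathfrak{B},\mathfrak{A}\otimes C\big),\qquad \Omega_{\mathfrak{B},C}(\Phi)=(\mathrm{id}\otimes\Phi)\circ\eta_\mathfrak{B},
\]
exactly as in \eqref{bijection1}. Checking that $\mathfrak{B}\mapsto \mathcal{C}(\mathfrak{A},\mathfrak{B})$ is functorial and that $\Omega_{\mathfrak{B},C}$ is natural in both variables is carried out by the same commuting square of $\mathrm{Hom}$-sets that appears at the end of the proof of Theorem~\ref{thm_left_adjoint}; this exhibits $\mathcal{C}(\mathfrak{A},-)\colon \p\text{-Alg}\to \mathrm{g}\text{-}\mathrm{ComAlg}$ as left adjoint to $\mathfrak{A}\otimes-$.

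For the converse $(\Longrightarrow)$, I would first verify that $\p\text{-Alg}$ (for a graded operad $\p$) is closed under arbitrary products and that these are computed degreewise on the underlying graded vector spaces: for a family $(\mathfrak{A}_i)_{i\in I}$ one takes $\big(\prod_i \mathfrak{A}_i\big)_p=\prod_i(\mathfrak{A}_i)_p$ with the componentwise graded operad action, and the universal property is checked exactly as in \eqref{arb_product_P_alg}--\eqref{arb_product_P_alg_structure}. Since a functor that has a left adjoint preserves all limits (\cite[p.~118]{MacLane}), the functor $\mathfrak{A}\otimes-$ preserves products; applying this to the product of countably many copies of the base field $\K$ (sitting in degree $0$) in $\mathrm{g}\text{-}\mathrm{ComAlg}$ and reading off the degree-$p$ homogeneous component, the canonical map $\mathfrak{A}_p\otimes\big(\prod_{\mathbb{N}}\K\big)\to\prod_{\mathbb{N}}\mathfrak{A}_p$ must be an isomorphism, which forces $\dim_\K\mathfrak{A}_p<\infty$ for every $p\ge 0$.

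The main obstacle I anticipate is bookkeeping rather than conceptual: one must ensure that the generators of the defining ideal of $\mathcal{C}(\mathfrak{A},\mathfrak{B})$ are homogeneous of the correct cohomological degree, so that $J$ is a graded ideal and the quotient is genuinely a \emph{graded} commutative algebra, and that the Koszul signs $\varrho$ of \eqref{koszul_sign} match on both sides of the universal property. These are precisely the points already resolved in Lemma~\ref{lemma_graded_eta_A} and Proposition~\ref{Thm_univ_prop_graded_algebras}, so the actual content is just the observation that nothing in those arguments used $\mathfrak{B}=\mathfrak{A}$ beyond the finite-dimensionality of the homogeneous components of $\mathfrak{A}$. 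In the converse direction the only subtlety is to confirm that the categorical product in $\mathrm{g}\text{-}\mathrm{ComAlg}$ and in $\p\text{-Alg}$ is the degreewise product, which legitimizes the degree-by-degree comparison with $\prod_{\mathbb{N}}\mathfrak{A}_p$.
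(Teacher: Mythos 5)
Your proposal is correct and follows essentially the same route as the paper: the forward direction is obtained by extending the construction of $\mathcal{C}(\mathfrak{A})$, Lemma \ref{lemma_graded_eta_A}, and Proposition \ref{Thm_univ_prop_graded_algebras} to a relative object $\mathcal{C}(\mathfrak{A},\mathfrak{B})$ exactly as you describe (the paper states this in one line), and the converse uses the degreewise description of products in $\mathrm{g}$-$\mathrm{ComAlg}$ and $\p$-Alg together with preservation of limits by right adjoints applied to $\prod_{\mathbb{N}}\K$ in degree $0$. Your write-up is in fact slightly more explicit than the paper's on the forward direction, but the argument is the same.
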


	\begin{proof}
Let $\p$ be a graded operad and $\mathfrak{A}=\oplus_{p \ge 0}\mathfrak{A}_p \in \p$-algebra such that $\mathfrak{A}_p$ is finite-dimensional for each $p \ge 0$.	Similar to the construction of the universal graded algebra $\mathcal{C}(\mathfrak{A})$, one can construct a functor $\mathcal{C}(\mathfrak{A},-):\mathcal{P}$-Alg $\to \mathrm{g}$-$\mathrm{ComAlg}$.

Conversely, assume that the functor $\mathfrak{A}\otimes -: \mathrm{g}$-$\mathrm{ComAlg} \to \mathcal{P}$-Alg admits a left adjoint. Let us first observe that the product of a family of graded vector spaces $\{\mathfrak{A}^i,~i\in I\}$ is given by a graded vector space $\prod_{i \in I} \mathfrak{A}^i$ with the homogeneous component of degree $p$ defined as $$\big(\prod_{i \in I} \mathfrak{A}^i\big)_p:=\{(a^i)_{i \in I} : |a^i|=p \mbox{ in } \mathfrak{A}^i, \text{ for all } i \in I\}.$$
Furthermore, it easily follows that in the category $\p$-Alg for a graded operad $\p$, the products are same as the products of the underlying graded vector spaces. Since the functor $\fa \otimes-$ admits a left adjoint,  it preserves arbitrary products in $\mathrm{g}$-$\mathrm{ComAlg}$.  Let us consider $\mathbb{K}$ as a graded commutative algebra concentrated in degree zero. 
Then by assumption $\mathfrak{A} \otimes -$ preserves the product of countable copies of $\K$ in $\mathrm{g}$-$\mathrm{ComAlg}$. By the definition of the product of graded vector spaces, we have 
$$\big( \mathfrak{A}\otimes \prod_{i \geq 0}\mathbb{K}^i\big)_p= \prod_{i \geq 0} (\mathfrak{A}_p\otimes \mathbb{K}^i). $$  
Hence, $\mathfrak{A} \otimes -$ preserves the product of countable copies of $\K$ in $\mathrm{g}$-$\mathrm{ComAlg}$ only if $\mathfrak{A}$ is finite-dimensional in each homogeneous component. 
	\end{proof}
	

\medskip
\section{\large An Enrichment of the category $\p$-$\mathrm{FinAlg}$ over the dual category of $\mathrm{ComAlg}$} \label{sec-4}
	
	In this section, we prove that the category $\p$-FinAlg of all finite-dimensional $\p$-algebras is enriched over the monoidal category $(\mathrm{ComAlg}^{\mathrm{op}},\otimes, \mathbb{K},\alpha,l,r)$, where $\mathrm{ComAlg}^{\mathrm{op}}$ the dual category of $\mathrm{ComAlg}$. Subsequently, we show that $\mathcal{C}(\mathfrak{a})$ possesses a unique bialgebra structure, making it the initial object in the category of all the commutative bialgebras coacting on the $\p$-algebra $\mathfrak{a}$.
	Before proceeding, we recall the notion of enriched category.
	\begin{definition}\cite[Section 1.2, pg 8]{Kelly}
		Let $(\mathcal{M},\otimes, I, \alpha,l,r)$ be a monoidal category. 
		Then an $\mathcal{M}$-category (or a category enriched over $\mathcal{M}$) is a category $\mathcal{C}$ consisting of
		\begin{itemize}
			\item a class $ob(\mathcal{C})$ of objects, 
			\item a hom-object $\mathcal{M}(\fa,\mathfrak{b}) \in \mathcal{M}$ for each pair of objects $\fa,\mathfrak{b}$ in $\mathcal{C}$,
			\item a composition morphism  $\circ_{\fa,\mathfrak{b},\mathfrak{c}}: \mathcal{M}(\mathfrak{b},\mathfrak{c}) \otimes \mathcal{M}(\mathfrak{a},\mathfrak{b}) \to \mathcal{M}(\mathfrak{a},\mathfrak{c})$ in $\mathcal{M}$ for each triple of objects $\fa,\mathfrak{b},\mathfrak{c} \in \mathcal{C}$. Additionally, for any $\fa,\mathfrak{b},\mathfrak{c},\mathfrak{d} \in \mathcal{C}$ the composition morphisms should the associativity condition as described by the commutative diagram below:
			\begin{equation}\label{diag4.1}
				\begin{tikzcd}
					& 	{\mathcal{M}(\mathfrak{a},\mathfrak{d})} \\
					{\mathcal{M}(\mathfrak{b},\mathfrak{d}) 	\otimes \mathcal{M}(\mathfrak{a},\mathfrak{b})} && {\mathcal{M}(\mathfrak{c},\mathfrak{d}) \otimes \mathcal{M}(\mathfrak{a},\mathfrak{c})} \\
					{\big(\mathcal{M}(\mathfrak{c},\mathfrak{d})\otimes\mathcal{M}(\mathfrak{b},\mathfrak{c})\big)\otimes \mathcal{M}(\mathfrak{a},\mathfrak{b})} && {\mathcal{M}(\mathfrak{c},\mathfrak{d}) \otimes \big(\mathcal{M}(\mathfrak{b},\mathfrak{c}) \otimes \mathcal{M}(\mathfrak{a},\mathfrak{b})\big).}
					\arrow["{{\circ_{\mathfrak{a},\mathfrak{b},\mathfrak{d}}}}"', from=2-1, to=1-2]
					\arrow["{{\circ_{\mathfrak{a},\mathfrak{c},\mathfrak{d}}}}", from=2-3, to=1-2]
					\arrow["{{\circ_{\mathfrak{b},\mathfrak{c},\mathfrak{d}} \otimes \mathrm{id}}}"', from=3-1, to=2-1]
					\arrow["\alpha", from=3-1, to=3-3]
					\arrow["{{\mathrm{id} \otimes \circ_{\mathfrak{a},\mathfrak{b},\mathfrak{c}}}}", from=3-3, to=2-3]
				\end{tikzcd} 
			\end{equation}  
				
			\item an identity morphism $\varepsilon_\fa:I \to \mathcal{M}(\fa,\fa)$ for each object $\fa \in \mathcal{C}$ there exists such that the following diagrams commute:
			\begin{equation}\label{diag4.2}
					\begin{tikzcd}
					{\mathcal{M}(\mathfrak{a},\mathfrak{b})} & {\mathcal{M}(\mathfrak{a},\mathfrak{b}) \otimes \mathcal{M}(\mathfrak{a},\mathfrak{a})} && {\mathcal{M}(\mathfrak{a},\mathfrak{b})} & {\mathcal{M}(\mathfrak{b},\mathfrak{b}) \otimes \mathcal{M}(\mathfrak{a},\mathfrak{b})} \\
					& 	{\mathcal{M}(\mathfrak{a},\mathfrak{b}) \otimes \mathbb{K}} &&& {\mathbb{K}\otimes \mathcal{M}(\mathfrak{a},\mathfrak{b}).}
					\arrow["{{\circ_{\mathfrak{a},\mathfrak{a},\mathfrak{b}}}}"', from=1-2, to=1-1]
					\arrow["{{\circ_{\mathfrak{a},\mathfrak{b},\mathfrak{b}}}}"', from=1-5, to=1-4]
					\arrow["r", from=2-2, to=1-1]
					\arrow["{{\mathrm{id} \otimes 	\varepsilon_\mathfrak{a} }}"', from=2-2, to=1-2]
					\arrow["l", from=2-5, to=1-4]
					\arrow["{{\varepsilon_\mathfrak{b} 	\otimes  \mathrm{id} }}"', from=2-5, to=1-5]
				\end{tikzcd}
			\end{equation}
		\end{itemize}  
	\end{definition}
		
	\begin{theorem}
		Let $\mathrm{ComAlg}^{\mathrm{op}}$ be the dual category of $\mathrm{ComAlg}$. The category of all finite-dimensional $\p$-algebras is enriched over the monoidal category $(\mathrm{ComAlg}^{\mathrm{op}},\otimes, \mathbb{K},\alpha,l,r)$.
	\end{theorem}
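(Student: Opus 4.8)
The plan is to realise the enrichment through the two-variable universal property established in Theorem \ref{thm_left_adjoint}. For finite-dimensional $\p$-algebras $\fa,\mathfrak{b}$ I would take the hom-object to be
\[
	\mathrm{ComAlg}^{\mathrm{op}}(\fa,\mathfrak{b}) \ :=\ \mathcal{C}(\mathfrak{b},\fa),
\]
which is a bona fide commutative algebra because $\mathfrak{b}$ is finite-dimensional, so the construction \eqref{C(a,b)} applies with $\mathfrak{b}$ in the first slot. This choice is forced by the adjunction: setting $C=\K$ in the natural bijection \eqref{bijection1} gives $\mathrm{Hom}_{\mathrm{ComAlg}}(\mathcal{C}(\mathfrak{b},\fa),\K)\cong\mathrm{Hom}_{\p\text{-}\mathrm{Alg}}(\fa,\mathfrak{b})$, so that the underlying ordinary category of the enrichment is exactly $\p$-$\mathrm{FinAlg}$; more generally, for every commutative algebra $C$ one has the natural bijection $\Omega\colon\mathrm{Hom}_{\mathrm{ComAlg}}(\mathcal{C}(\mathfrak{b},\fa),C)\cong\mathrm{Hom}_{\p\text{-}\mathrm{Alg}}(\fa,\mathfrak{b}\otimes C)$, and this is essentially the only tool I intend to use. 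Throughout, I write $\eta^{\mathfrak{d}}_{\mathfrak{e}}\colon\mathfrak{e}\to\mathfrak{d}\otimes\mathcal{C}(\mathfrak{d},\mathfrak{e})$ for the universal $\p$-algebra morphism of \eqref{thm_def_eta_b}.

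Next I would define the composition. For finite-dimensional $\fa,\mathfrak{b},\mathfrak{c}$, a morphism $\circ_{\fa,\mathfrak{b},\mathfrak{c}}\colon \mathrm{ComAlg}^{\mathrm{op}}(\mathfrak{b},\mathfrak{c})\otimes\mathrm{ComAlg}^{\mathrm{op}}(\fa,\mathfrak{b})\to\mathrm{ComAlg}^{\mathrm{op}}(\fa,\mathfrak{c})$ is, by definition of the opposite category, a morphism $\mathcal{C}(\mathfrak{c},\fa)\to\mathcal{C}(\mathfrak{c},\mathfrak{b})\otimes\mathcal{C}(\mathfrak{b},\fa)$ in $\mathrm{ComAlg}$. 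I obtain it by forming the composite of $\p$-algebra morphisms
\[
	\fa \xrightarrow{\ \eta^{\mathfrak{b}}_{\fa}\ } \mathfrak{b}\otimes\mathcal{C}(\mathfrak{b},\fa) \xrightarrow{\ \eta^{\mathfrak{c}}_{\mathfrak{b}}\otimes\mathrm{id}\ } \mathfrak{c}\otimes\mathcal{C}(\mathfrak{c},\mathfrak{b})\otimes\mathcal{C}(\mathfrak{b},\fa)
\]
and then applying $\Omega^{-1}$ with $C=\mathcal{C}(\mathfrak{c},\mathfrak{b})\otimes\mathcal{C}(\mathfrak{b},\fa)$, i.e. the universal property of $\mathcal{C}(\mathfrak{c},\fa)$ in the form of Corollary \ref{cor_univ_prop}. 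Here one uses two small facts, both proved exactly as in Proposition \ref{funct_com_to_p-alg}: tensoring a $\p$-algebra morphism with the identity of a fixed commutative algebra is again a $\p$-algebra morphism, and the canonical identification $(\mathfrak{c}\otimes D)\otimes E\cong\mathfrak{c}\otimes(D\otimes E)$ of $\p$-algebras for $D,E\in\mathrm{ComAlg}$. The unit $\varepsilon_\fa\colon\K\to\mathrm{ComAlg}^{\mathrm{op}}(\fa,\fa)=\mathcal{C}(\fa)$ is defined as the morphism $\mathcal{C}(\fa)\to\K$ in $\mathrm{ComAlg}$ corresponding under $\Omega$ to $\mathrm{id}_\fa\in\mathrm{Hom}_{\p\text{-}\mathrm{Alg}}(\fa,\fa\otimes\K)$.

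It then remains to verify the axioms, namely the associativity diagram \eqref{diag4.1} for $\circ$ and the unit diagrams \eqref{diag4.2} for $\varepsilon$ (the associator $\alpha$ and unitors $l,r$ of $(\mathrm{ComAlg}^{\mathrm{op}},\otimes,\K)$ being inherited from those of vector spaces, hence coherent). My strategy is to transport both composites in each diagram back to $\p$-$\mathrm{Alg}$ through the bijections $\Omega$: for finite-dimensional $\fa,\mathfrak{b},\mathfrak{c},\mathfrak{d}$ I would show that the two ways around \eqref{diag4.1} are sent by $\Omega$ to one and the same $\p$-algebra morphism $\fa\to\mathfrak{d}\otimes\mathcal{C}(\mathfrak{d},\mathfrak{c})\otimes\mathcal{C}(\mathfrak{c},\mathfrak{b})\otimes\mathcal{C}(\mathfrak{b},\fa)$, namely the iterated composite $\eta^{\mathfrak{b}}_{\fa}$ followed by $\eta^{\mathfrak{c}}_{\mathfrak{b}}\otimes\mathrm{id}$ followed by $\eta^{\mathfrak{d}}_{\mathfrak{c}}\otimes\mathrm{id}\otimes\mathrm{id}$ (up to reassociation), and then conclude equality of the two $\mathrm{ComAlg}$ morphisms from the injectivity of $\Omega$, equivalently from the uniqueness clause of Corollary \ref{cor_univ_prop}. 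The unit diagrams \eqref{diag4.2} are dispatched in the same way, using that $\eta^{\fa}_{\fa}$ corresponds to $\mathrm{id}_\fa$ and that, by the formula $\Omega_{\mathfrak{b},C}(\Phi)=(\mathrm{id}\otimes\Phi)\circ\eta_\mathfrak{b}$, naturality of $\Omega$ in its first variable lets one rewrite $\Omega^{-1}$ of a composite through an $\eta$.

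The whole argument is conceptually routine once the hom-object and the three structure morphisms are pinned down; the one place that actually requires care — and the main obstacle — is the bookkeeping behind the claim that the two legs of \eqref{diag4.1} transport to the same morphism in $\p$-$\mathrm{Alg}$. The difficulty is essentially notational: one must track which category one is working in ($\mathrm{ComAlg}$ versus $\mathrm{ComAlg}^{\mathrm{op}}$), insert the associativity constraints of $\otimes$ in the correct spots, and repeatedly invoke the naturality of $\eta^{\mathfrak{d}}_{\mathfrak{e}}$ in $\mathfrak{e}$ together with the fact that $-\otimes C$ preserves $\p$-algebra morphisms. Once this is organised, both unitality and associativity reduce to a single application of uniqueness, and the enrichment follows.
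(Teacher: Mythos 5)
Your proposal is correct and follows essentially the same route as the paper: the hom-object $\mathcal{C}(\mathfrak{b},\fa)$, the composition obtained by applying the universal property of $\mathcal{C}(\mathfrak{c},\fa)$ to the composite $(\eta^{\mathfrak{c}}_{\mathfrak{b}}\otimes\mathrm{id})\circ\eta^{\mathfrak{b}}_{\fa}$, and the unit corresponding to the canonical isomorphism $\fa\cong\fa\otimes\K$ are exactly the structure maps the paper uses. The only inessential difference is in the verification of Diagrams \eqref{diag4.1} and \eqref{diag4.2}: the paper first extracts the explicit coordinate formulas $\Delta_{\fa,\mathfrak{b},\mathfrak{c}}(x_{st})=\sum_{i} x_{si}^{\prime}\otimes x_{it}^{\prime\prime}$ and $\varepsilon_\fa(x_{st})=\delta_{s,t}$ and checks the diagrams by direct computation, whereas you transport both legs back to $\p$-$\mathrm{Alg}$ and invoke the uniqueness clause of the universal property; both arguments go through.
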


	\begin{proof}
		In order to show that the category $\p$-FinAlg is enriched over the monoidal category $(\mathrm{ComAlg}^{\mathrm{op}},\otimes, \mathbb{K},\alpha,l,r)$, we need to find a hom-object $\mathcal{M}(\mathfrak{a},\mathfrak{b})$ in $\mathrm{ComAlg}^{\mathrm{op}}$ for any $\mathfrak{a},\mathfrak{b}\in \p$-FinAlg such that 
		\begin{itemize}
			\item[(1)]  morphisms $f: \mathfrak{a} \to \mathfrak{b}$ in the enriched category structure on $\p$-FinAlg are given by morphisms $f: \K \to \mathcal{M}(\mathfrak{a},\mathfrak{b})$ in  $\mathrm{ComAlg}^{\mathrm{op}}$.
		
			\item[(2)] for each object $\mathfrak{a}$ in $\p$-FinAlg, we must have a morphism $\varepsilon_\mathfrak{a}: \K \to \mathcal{M}(\mathfrak{a},\mathfrak{a})$ in $\mathrm{ComAlg}^{\mathrm{op}}$ which is called the identity morphism.
		
			\item[(3)] for any object $\fa,\mathfrak{b},\mathfrak{c}$ there exists a  composition morphism $\circ_{\mathfrak{a},\mathfrak{b},\mathfrak{c}}: \mathcal{M}(\mathfrak{b},\mathfrak{c}) \otimes \mathcal{M}(\mathfrak{a},\mathfrak{b})\to \mathcal{M}(\mathfrak{a},\mathfrak{c})$ in $\mathrm{ComAlg}^{\mathrm{op}}$ for $\mathfrak{a},\mathfrak{b},\mathfrak{c}\in \p$-FinAlg for which Diagram \eqref{diag4.1} commutes. 
			Additionally, for $\mathfrak{a},\mathfrak{b}\in \p$-FinAlg the composition morphisms $\circ_{\mathfrak{a},\mathfrak{a},\mathfrak{b}},\circ_{\mathfrak{a},\mathfrak{b},\mathfrak{b}}$ and the identity morphisms $\varepsilon_{\mathfrak{a}},\varepsilon_{\mathfrak{b}}$  makes Diagram \eqref{diag4.2} commutative.
 		\end{itemize}	
 		
 		\medskip
		Now, we step-by-step show that the universal construction done in Section \ref{sec-3} leads to an enrichment of $\p$-FinAlg. First, for any two objects $\mathfrak{a}$ and $\mathfrak{b}$ in $\p$-FinAlg, we fix the required object $\mathcal{M}(\mathfrak{a},\mathfrak{b})$ to be the universal object $\mathcal{C}(\mathfrak{b},\mathfrak{a})$ constructed in the proof of Theorem \ref{thm_left_adjoint}. Then, the morphism set from $\mathfrak{a}$ to $\mathfrak{b}$ in the enriched category structure on $\p$-FinAlg is defined to be $\mathrm{Hom}_{\mathrm{ComAlg^{\mathrm{op}}}}(\K,\mathcal{C}(\mathfrak{b},\mathfrak{a}))$. Note that using the universal property of $\mathcal{C}(\mathfrak{a},\mathfrak{b})$ obtained from the  proof of Theorem \ref{thm_left_adjoint} we get a bijective correspondence
		\begin{equation*}
			\mathrm{Hom}_{\p\text{-}\mathrm{FinAlg}}(\mathfrak{a},\mathfrak{b})  \longleftrightarrow \mathrm{Hom}_{\mathrm{ComAlg^{\mathrm{op}}}}\big(\K,\mathcal{C}(\mathfrak{b},\mathfrak{a})\big).
		\end{equation*}
		
		Next, for any object $\fa \in \p$-FinAlg, the $\p$-algebra morphism 
		$\eta_{\fa,\fa}:\fa \to \fa \otimes \mathcal{C}(\fa)$ (cf. Equation \eqref{thm_def_eta_b}) uniquely determines an algebra morphism $\varepsilon_\fa: \mathcal{C}(\fa) \to \K$ making the following diagram commutative 
		\begin{equation}\label{diag_coidentity}
			\begin{tikzcd}
				{\mathfrak{a}} & {\fa \otimes \mathcal{C}(\fa)} \\
				& {\fa \otimes \mathbb{K}}
				\arrow["{\eta_{\mathfrak{a},\fa}}", from=1-1, to=1-2]
				\arrow["\mathrm{i}"', from=1-1, to=2-2]
				\arrow["{id \otimes \varepsilon_{\fa}}", from=1-2, to=2-2]
			\end{tikzcd}
		\end{equation} 
		where $\mathrm{i}:\fa \to \fa \otimes \mathbb{K}$ is the canonical isomorphism from $\mathfrak{a}$ to $\fa \otimes \mathbb{K}$.  
		Evaluating the above diagram we get 
		\begin{equation}\label{def_epsilon_id}
			\varepsilon_\fa(x_{st})=\delta_{s,t} \quad \text{for all }s,t=1,2,\ldots,\mathrm{dim}~\fa.
		\end{equation}
		Thus, for any object $\fa$ in $\p$-FinAlg there exists a morphism $\varepsilon_\fa \in \mathrm{Hom}_{\mathrm{ComAlg}^{\mathrm{op}}}\big(\mathbb{K},\mathcal{C}(\mathfrak{a})\big)$ which serves as the identity morphism for $\fa$ in the enriched category.
		
		\medskip
		Finally, for any objects $\fa$, $\mathfrak{b}$, and $\mathfrak{c}$ in $\p$-FinAlg, 
		consider the linear map $f_\mathfrak{c}:\mathfrak{c}\rightarrow \fa\otimes \mathcal{C}(\fa,\mathfrak{b})\otimes \mathcal{C}(\mathfrak{b},\mathfrak{c})$ defined by
		\begin{equation}
			f_\mathfrak{c}:\mathfrak{c}
			\xrightarrow{\eta_{\mathfrak{c},\mathfrak{b}}}\mathfrak{b}\otimes \mathcal{C}(\mathfrak{b},\mathfrak{c}) \xrightarrow{\eta_{\mathfrak{b},\fa}\otimes \mathrm{id}} \fa\otimes \mathcal{C}(\fa,\mathfrak{b})\otimes \mathcal{C}(\mathfrak{b},\mathfrak{c}). 
		\end{equation}
		One can verify that $f_\mathfrak{c}$ is a $\p$-algebra morphism. Using the universal property of $\eta_{\mathfrak{c},\mathfrak{a}}: \mathfrak{c} \to \mathfrak{a} \otimes \mathcal{C}(\mathfrak{a},\mathfrak{c})$ we get an algebra morphism $\Delta_{\mathfrak{a},\mathfrak{b},\mathfrak{c}}: \mathcal{C}(\mathfrak{a}, \mathfrak{c}) \to \mathcal{C}(\fa,\mathfrak{b})\otimes \mathcal{C}(\mathfrak{b},\mathfrak{c})$ for which the following diagram is commutative
		\begin{equation}\label{diag_coen_id2}
			\begin{tikzcd}
				{\mathfrak{c}} & {\mathfrak{a} \otimes \mathcal{C}(\mathfrak{a},\mathfrak{c})} \\
				& {\mathfrak{a} 
					\otimes \mathcal{C}(\mathfrak{a},\mathfrak{b}) \otimes \mathcal{C}(\mathfrak{b},\mathfrak{c}).}
				\arrow["{\eta_{\mathfrak{c},\mathfrak{a}}}", from=1-1, to=1-2]
				\arrow["{f_\mathfrak{c}}"', from=1-1, to=2-2]
				\arrow["{\mathrm{id} \otimes \Delta_{\mathfrak{a},\mathfrak{b},\mathfrak{c}}}", from=1-2, to=2-2]
			\end{tikzcd}
		\end{equation}
		Let $x_{ij},~x_{ij}^\prime,$ and $x_{ij}^{\prime\prime}$ denote the equivalence classes in the algebras $\mathcal{C}(\mathfrak{a},\mathfrak{c}),$ $\mathcal{C}(\mathfrak{a},\mathfrak{b}),$ and $\mathcal{C}(\mathfrak{b},\mathfrak{c}),$ respectively. Then, evaluating the above diagram, the map $\Delta_{\fa,\mathfrak{b},\mathfrak{c}}$ can be described by the following expression:
		\begin{equation}\label{def_delta_abc}
			\Delta_{\fa,\mathfrak{b},\mathfrak{c}} (x_{st}) ~=~ \sum_{i=1}^{m} x_{si}^\prime \otimes x_{it}^{\prime\prime}, 
		\end{equation}
		for any $s=1,\ldots,\mathrm{dim}~\fa$ and $t = 1,\ldots, \mathrm{dim}~\mathfrak{c}$, where $m=\mathrm{dim}~\mathfrak{b}$. 
		Using Equation \eqref{def_delta_abc} we get, for any objects $\mathfrak{a}$, $\mathfrak{b}$, $\mathfrak{c}$, $\mathfrak{d}$ in $\p$-FinAlg the Diagram \eqref{diag4.1} is commutative, where we define the composition $\circ_{a,b,c}:=\Delta_{c,b,a}^{op}$. Similarly, using Equation \eqref{def_epsilon_id} and \eqref{def_delta_abc} one can also verify that Diagram \eqref{diag4.2} commutes. Therefore, the category $\p$-FinAlg is enriched over the  monoidal category $(\mathrm{ComAlg}^{\mathrm{op}}, \otimes, \mathbb{K},\alpha,l,r)$.
	\end{proof}

	\begin{theorem}
		Let $\fa$ be an object in $\p$-FinAlg. Then there exists a unique bialgebra structure on $\mathcal{C}(\fa)$ such that the $\p$-algebra morphism $\eta_\fa : \fa \to \fa \otimes \mathcal{C}(\fa)$ defines a right $\mathcal{C}(\fa)$-comodule structure on $\fa$.
	\end{theorem}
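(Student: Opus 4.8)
The plan is to import the enrichment data constructed in the previous theorem, specialized to the single object $\fa$. Since $\mathcal{M}(\fa,\fa)=\mathcal{C}(\fa,\fa)=\mathcal{C}(\fa)$, the composition morphism $\circ_{\fa,\fa,\fa}=\Delta_{\fa,\fa,\fa}^{\mathrm{op}}$ gives, back inside $\mathrm{ComAlg}$, a commutative-algebra morphism $\Delta := \Delta_{\fa,\fa,\fa}\colon \mathcal{C}(\fa)\to\mathcal{C}(\fa)\otimes\mathcal{C}(\fa)$, and the identity morphism $\varepsilon_\fa$ of the enriched structure gives an algebra morphism $\varepsilon := \varepsilon_\fa\colon\mathcal{C}(\fa)\to\K$. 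On generators, by \eqref{def_delta_abc} and \eqref{def_epsilon_id}, these are $\Delta(x_{st})=\sum_{i=1}^{n} x_{si}\otimes x_{it}$ and $\varepsilon(x_{st})=\delta_{s,t}$ with $n=\dim\fa$. I claim $(\mathcal{C}(\fa),\Delta,\varepsilon)$ is the required bialgebra: both structure maps are algebra morphisms by construction, so the only bialgebra axiom still to be verified is that $(\mathcal{C}(\fa),\Delta,\varepsilon)$ is a coalgebra.

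First I would establish coassociativity of $\Delta$ together with the counit identities. These are precisely the enrichment axioms for $\p$-$\mathrm{FinAlg}$ over $\mathrm{ComAlg}^{\mathrm{op}}$ evaluated at $\fa=\mathfrak{b}=\mathfrak{c}=\mathfrak{d}$: the associativity diagram \eqref{diag4.1} translates into $(\Delta\otimes\mathrm{id})\circ\Delta=(\mathrm{id}\otimes\Delta)\circ\Delta$, and the unit triangles \eqref{diag4.2} translate into $(\varepsilon\otimes\mathrm{id})\circ\Delta=\mathrm{id}=(\mathrm{id}\otimes\varepsilon)\circ\Delta$, after identifying $\K\otimes\mathcal{C}(\fa)\cong\mathcal{C}(\fa)\cong\mathcal{C}(\fa)\otimes\K$. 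Alternatively, both identities are a one-line check on the generators $x_{st}$ from the explicit formulas above. Hence $\mathcal{C}(\fa)$ carries a (commutative) bialgebra structure.

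Next I would verify that $\eta_\fa=\eta_{\fa,\fa}\colon\fa\to\fa\otimes\mathcal{C}(\fa)$ is a right $\mathcal{C}(\fa)$-comodule structure on $\fa$, that is, $(\mathrm{id}_\fa\otimes\Delta)\circ\eta_\fa=(\eta_\fa\otimes\mathrm{id})\circ\eta_\fa$ and $(\mathrm{id}_\fa\otimes\varepsilon)\circ\eta_\fa=\mathrm{i}$. The second identity is exactly the commutativity of Diagram \eqref{diag_coidentity}. For the first, Diagram \eqref{diag_coen_id2} with $\mathfrak{a}=\mathfrak{b}=\mathfrak{c}=\fa$ reads $(\mathrm{id}\otimes\Delta)\circ\eta_\fa=f_\fa$, and by the very definition of $f_{\mathfrak c}$ one has $f_\fa=(\eta_\fa\otimes\mathrm{id})\circ\eta_\fa$; combining the two gives coassociativity of the coaction. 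Again, one may instead compute directly on a basis $\{a_i\}$ of $\fa$ using $\eta_\fa(a_i)=\sum_s a_s\otimes x_{si}$.

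Finally, for uniqueness, suppose $(\mathcal{C}(\fa),\Delta',\varepsilon')$ is any bialgebra structure making $\eta_\fa$ a right $\mathcal{C}(\fa)$-comodule. Then $\Delta'$ is a commutative-algebra morphism with $(\mathrm{id}\otimes\Delta')\circ\eta_\fa=(\eta_\fa\otimes\mathrm{id})\circ\eta_\fa=f_\fa$, where $f_\fa$ is a $\p$-algebra morphism $\fa\to\fa\otimes\big(\mathcal{C}(\fa)\otimes\mathcal{C}(\fa)\big)$; by the universal property of $\eta_\fa$ (Corollary \ref{cor_univ_prop} applied with $C=\mathcal{C}(\fa)\otimes\mathcal{C}(\fa)$) there is exactly one algebra morphism with that property, so $\Delta'=\Delta$. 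Similarly $(\mathrm{id}\otimes\varepsilon')\circ\eta_\fa=\mathrm{i}$, and since the canonical isomorphism $\mathrm{i}\colon\fa\to\fa\otimes\K$ is a $\p$-algebra morphism, Corollary \ref{cor_univ_prop} applied with $C=\K$ forces $\varepsilon'=\varepsilon$. I expect the only genuinely delicate point to be the bookkeeping needed to see that the enrichment's associativity and unit diagrams translate verbatim into the coassociativity and counit axioms under the identification $\circ_{\fa,\fa,\fa}=\Delta^{\mathrm{op}}$; if one prefers to avoid this, the alternative generator-level computations are routine but slightly tedious.
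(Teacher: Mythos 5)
Your proposal is correct and follows essentially the same route as the paper: both obtain $\Delta_\fa$ and $\varepsilon_\fa$ by specializing the enrichment data of the preceding theorem to $\fa=\mathfrak{b}=\mathfrak{c}$ (Diagrams \eqref{diag_coen_id2} and \eqref{diag_coidentity}) and read off the comodule axioms from Diagram \eqref{comodule action 1}. If anything, you are more explicit than the paper on two points it leaves implicit — the verification of coassociativity/counit axioms for $\Delta_\fa$ itself via \eqref{diag4.1}--\eqref{diag4.2}, and the uniqueness of the bialgebra structure via Corollary \ref{cor_univ_prop} applied with $C=\mathcal{C}(\fa)\otimes\mathcal{C}(\fa)$ and $C=\K$ — which is a welcome tightening rather than a deviation.
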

	
	\begin{proof}
		Taking $\fa=\mathfrak{b}=\mathfrak{c}$ in Diagram \eqref{diag_coen_id2}, we get a unique algebra morphism $\Delta_{\fa}:\mathcal{C}(\fa)\rightarrow\mathcal{C}(\fa)\otimes\mathcal{C}(\fa)$ defined as 
		\begin{equation}
			\Delta_\fa(x_{st})=\sum_{i=1}^n x_{si}\otimes x_{it} \quad \text{for all}~ s,t=1,2,\ldots, n.
		\end{equation}
		Using it we get that the following diagram is commutative 
		\begin{equation}\label{comodule action 1}
			\begin{CD}
				\fa @>\eta_\fa>> \fa\otimes \mathcal{C}(\fa)\\
				@V \eta_\fa VV @V \mathrm{id}\otimes \Delta_\fa VV\\
				\fa\otimes \mathcal{C}(\fa)@>\eta_\fa\otimes \mathrm{id}>> \fa\otimes \mathcal{C}(\fa)\otimes\mathcal{C}(\fa).
			\end{CD}
		\end{equation}
		Furthermore, note that Diagram \eqref{diag_coidentity} guarantees the existence of coidentity map 
		$\varepsilon_\fa:\mathcal{C}(\fa)\rightarrow \mathbb{K}$ given by
		\begin{equation}
			\varepsilon_\fa(x_{st})=\delta_{s,t} \quad \text{for all }s,t=1,2,\ldots,n.
		\end{equation}
		Thus, making $\mathcal{C}(\fa)$ a bialgebra with coproduct $\Delta_\fa$ and counit $\varepsilon_\fa$. Also note that, Diagram \eqref{comodule action 1} implies that $\eta_\fa:\fa\rightarrow \fa\otimes \mathcal{C}(\fa)$ defines a right $\mathcal{C}(\fa)$-comodule structure on $\fa$. 
	\end{proof}

	\begin{remark}\label{universal bialgebra const}
		The universal property of $\eta_\fa: \fa \to \fa \otimes \mathcal{C}(\fa)$ stated in Corollary \ref{cor_univ_prop} extends to bialgebras in the following way: For any commutative bialgebra $C$ with $C$-comodule action $\psi:\fa\rightarrow \fa\otimes C,$ there exists a unique bialgebra homomorphism $\Phi:\mathcal{C}(\fa)\rightarrow C$ such that the following diagram commutes
 		\begin{equation}\label{universal bialgebra prop}
 			\begin{tikzcd}
 				\fa & {\fa \otimes \mathcal{C}(\fa)} \\
 				& C.
 				\arrow["{\eta_\fa}", from=1-1, to=1-2]
 				\arrow["\psi"', from=1-1, to=2-2]
 				\arrow["{\mathrm{id} \otimes \Phi}", from=1-2, to=2-2]
 			\end{tikzcd} 
		\end{equation}
	\end{remark}

	We recall from \cite{Takeuchi} that, one can construct a free commutative Hopf algebra from a commutative bialgebra. The construction defines a functor $L: \operatorname{ComBiAlg} \to \operatorname{ComHopf}$ that is left adjoint to the forgetful functor $U:\operatorname{ComHopf} \to \operatorname{ComBiAlg}$. We denote by $\mu:\mathds{1}_{\operatorname{ComBiAlg}} \to UL,$ the unit of the adjunction $L \dashv U$. More precisely, for any $\p$-algebra $\fa$, the associated Hopf algebra $\mathcal{H}(\fa)$ and the comodule action $\lambda_\fa:\fa\rightarrow\fa\otimes \mathcal{H}(\fa)$ is defined as follows: 
	$$\mathcal{H}(\fa):=L(\mathcal{C}(\fa)),\quad\text{and}\quad\lambda_\fa:=(\mathrm{id}\otimes \mu_{\mathcal{C}(\fa)})\circ \eta_\fa.$$
	Furthermore, there exists an initial object $(\mathcal{H}(\fa),\lambda_\fa)$ in the category of all commutative Hopf algebras coacting on the finite-dimensional $\p$-algebra $\fa$. With the above discussion in mind, we state our next result.

	\begin{theorem}
		Let $H$ be a commutative Hopf algebra with right $H$-comodule action $\psi:\fa\rightarrow \fa\otimes H,$ then there exists a unique Hopf algebra homomorphism $\Phi:\mathcal{H}(\fa)\rightarrow H$ such that 
 		\begin{equation}\label{universal hopf prop}
 			\xymatrix{
			\mathfrak{a} \ar[r]^-{\lambda_\mathfrak{a}}\ar[rd]_\psi 
			&\fa\otimes \mathcal{H}(\fa)\ar[d]^{\mathrm{id}\otimes \Phi}\\
			&\fa\otimes H} 
		\end{equation}
	\end{theorem}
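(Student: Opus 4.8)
The plan is to derive this universal property of $\mathcal{H}(\fa)$ by stacking two universal properties already at our disposal: the universal property of the commutative bialgebra $\mathcal{C}(\fa)$ recorded in Remark \ref{universal bialgebra const}, and the free--forgetful adjunction $L \dashv U$ between $\operatorname{ComBiAlg}$ and $\operatorname{ComHopf}$ together with its unit $\mu \colon \mathds{1}_{\operatorname{ComBiAlg}} \to UL$. No computation with structure constants or universal polynomials will be needed; the argument is a two-step diagram chase.

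First I would unpack the given data. A right $H$-comodule action $\psi \colon \fa \to \fa \otimes H$ that is a $\p$-algebra morphism is, after forgetting the antipode of $H$, precisely a right $U(H)$-comodule action in $\p$-FinAlg, since the comodule axioms and the compatibility with the $\p$-algebra structure only involve the comultiplication, counit, and multiplication of $H$. Hence Remark \ref{universal bialgebra const} applies and produces a \emph{unique} commutative bialgebra homomorphism $\varphi \colon \mathcal{C}(\fa) \to U(H)$ with $(\mathrm{id} \otimes \varphi) \circ \eta_\fa = \psi$.

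Next I would pass through the adjunction. The bialgebra map $\varphi \colon \mathcal{C}(\fa) \to U(H)$ corresponds, under the natural bijection
\begin{equation*}
\operatorname{Hom}_{\operatorname{ComHopf}}\big(L(\mathcal{C}(\fa)),\, H\big) \;\cong\; \operatorname{Hom}_{\operatorname{ComBiAlg}}\big(\mathcal{C}(\fa),\, U(H)\big),
\end{equation*}
to a unique Hopf algebra homomorphism $\Phi \colon \mathcal{H}(\fa) = L(\mathcal{C}(\fa)) \to H$, characterised by $U(\Phi) \circ \mu_{\mathcal{C}(\fa)} = \varphi$. Recalling that $\lambda_\fa = (\mathrm{id} \otimes \mu_{\mathcal{C}(\fa)}) \circ \eta_\fa$, I would then verify commutativity of \eqref{universal hopf prop}:
\begin{equation*}
(\mathrm{id} \otimes \Phi) \circ \lambda_\fa
= (\mathrm{id} \otimes \Phi) \circ (\mathrm{id} \otimes \mu_{\mathcal{C}(\fa)}) \circ \eta_\fa
= \big(\mathrm{id} \otimes (U(\Phi) \circ \mu_{\mathcal{C}(\fa)})\big) \circ \eta_\fa
= (\mathrm{id} \otimes \varphi) \circ \eta_\fa = \psi .
\end{equation*}

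For uniqueness, let $\Phi' \colon \mathcal{H}(\fa) \to H$ be any Hopf algebra map with $(\mathrm{id} \otimes \Phi') \circ \lambda_\fa = \psi$. Setting $\varphi' := U(\Phi') \circ \mu_{\mathcal{C}(\fa)}$, the same chain of equalities gives $(\mathrm{id} \otimes \varphi') \circ \eta_\fa = \psi$, so the uniqueness clause in Remark \ref{universal bialgebra const} forces $\varphi' = \varphi$; the uniqueness clause in the adjunction $L \dashv U$ then forces $\Phi' = \Phi$. The only step demanding genuine attention — rather than a real obstacle — is the bookkeeping in the second paragraph: confirming that an $H$-comodule action and a $U(H)$-comodule action are literally the same datum, and that in both cases "comodule action" is required to be a $\p$-algebra morphism, so that Remark \ref{universal bialgebra const} and the discussion preceding the theorem apply verbatim. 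Once this is in place, the statement is a purely formal consequence of composing the two universal properties.
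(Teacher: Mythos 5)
Your argument is correct and is precisely the argument the paper has in mind: the paper's proof simply defers to \cite[Theorem 2.13]{Ag-Mil}, whose content is exactly your two-step composition of the universal property of the coacting bialgebra $\mathcal{C}(\fa)$ (Remark \ref{universal bialgebra const}) with the adjunction $L \dashv U$ and its unit $\mu$. Your write-up in fact supplies more detail than the paper itself, and the bookkeeping point you flag (that an $H$-comodule $\p$-algebra coaction is the same datum as a $U(H)$-comodule $\p$-algebra coaction) is handled correctly.
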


	\begin{proof}
		The proof is straightforward and similar to the proof given in \cite[Theorem 2.13]{Ag-Mil}.
	\end{proof}


\medskip
\section{\large Examples of universal construction for algebras over (Graded) Operads}\label{sec-5}
	
	In this section, we discuss examples of the universal constructions (introduced in Section \ref{sec-3}) for algebras over some binary quadratic operads, $k$-ary quadratic operads, non-symmetric operads, and graded operads. Furthermore, we show that the universal coacting bi/Hopf algebra associated with a finite-dimensional $\p$-algebra coincides the corresponding universal coacting algebra for Lie/Leibniz algebras \cite{Ag-Mil}, Poisson algebras \cite{Ag-Mil2}, and associative algebras \cite{Mil} if $\p$ is $\mathcal{L}ie$, $\mathcal{P}ois$, and $\mathcal{A}ss$, respectively. Moreover, we highlight the universal constructions for $k$-ary algebras and graded algebras using our operadic approach.
\bigskip
\subsection{Algebras over binary quadratic operads}~\\

	\vspace{-0.4cm}
	Here we will discuss some examples of binary quadratic operads and describe the universal algebra and universal polynomials for any finite-dimensional algebra governed by these operads. Specifically, we describe the universal algebras and universal polynomials for finite-dimensional commutative algebras, associative algebras, Leibniz algebras, Lie algebras, Zinbiel algebras, Poisson algebras, pre-Lie algebras, and Perm algebras. Before proceeding further, we fix some notation. We denote the basis elements $\mathrm{id}_{\Sm_2}$ and $(12)$ of the vector space $\K[\Sm_2]$ by $\mu$ and $\mu^{(12)}$, respectively.  
	
	\medskip
	We recall from Section \ref{sec-3} that for any finite-dimensional algebra $\mathfrak{a}$, over an operad $\mathcal{P}$, its universal algebra $\mathcal{C}(\mathfrak{a})$ is defined as
	\begin{equation}\label{section5_def_universal_algebra}
		\mathcal{C}(\fa) := \mathbb{K}\big[X_{si}|~s,i=1,2,\ldots,n\big] \big/ J,
	\end{equation}
	where $J$ is the ideal generated by the universal polynomials of $\fa$
	\begin{equation}\label{def_universal_poly_a}
		P^{(\fa)}_{(\mu,a,i_1,\ldots,i_k)} = \sum_{u=1}^n \alpha^u_{\gamma_k(\mu),i_1,\ldots,i_k} X_{au}
		\hspace{0.2cm} - \sum_{s_1,s_2,\ldots, s_k=1}^n \alpha^a_{\gamma_k(\mu),s_1,\ldots,s_k}X_{s_1 i_1}\cdots X_{s_k i_k},
	\end{equation} 
	for any $\mu\in \p(k)$ and $a,i_1,\ldots, i_k=1,\ldots,n$. If $\mathcal{P}$ is a binary quadratic operad, then we will show that the ideal $J \subseteq \mathcal{C}(\fa)$ is generated by the universal polynomials $P^{(\fa)}_{(\mu,a,i_1,\ldots,i_k)}$, where $\mu$ is a basis element of $\mathcal{P}(2)$.
	
	\medskip
	Any binary quadratic operad $\mathcal{P}$ can be described in terms of quadratic data (see \cite[Chapter 7]{Loday-Vallette}) as follows: 
	$$\mathcal{P}:= \T(E_\p)/(R_\p),$$
	where $\overline{E}_\p:=(0,0,E_\p,0,\ldots)$ is an $\mathbb{S}$-module concentrated in arity $2$ and $\T(E_\p)$ denotes the free operad over $\overline{E}_\p$ as described in Example \ref{EXAMPLE}.
	Here $(R_\p)$ is the operadic ideal generated by the $\Sm_3$-submodule $R_\p \subseteq \T(E_\p)(3)$, called the space of quadratic relations. 
	Now, let $\mathfrak{a}$ be any finite-dimensional $\mathcal{P}$-algebra with vector space basis $\{a_1,\ldots,a_n\}$. 
	Then, for any $\mu \in \mathcal{P}(k)$, $\nu \in \mathcal{P}(l)$, and $\sigma \in \Sm_{k+l-1}$ we have the following equations:
	\begin{equation}\label{eqn_freeOperad_struc_const_2}
		\gamma_k(\mu)(a_{r_1},\ldots,a_{r_k}) = \sum_{s=1}^n \alpha_{\gamma_k(\mu),r_1,\ldots,r_k}^sa_s, 
		\quad 
		\gamma_l(\nu)(a_{t_1},\ldots,a_{t_l}) = \sum_{s=1}^n \alpha_{\gamma_l(\nu),t_1,\ldots,t_l}^sa_s,
 	\end{equation} 
 	\begin{equation}\label{eqn_freeOperad_struct_const_gamma3}
 		\gamma_{k+l-1}\big((\mu \circ_1 \nu)^{\sigma}\big)(a_{i_1},\ldots,a_{i_{k+l-1}})
 		=
 		\sum_{s=1}^n \alpha^s_{\gamma_{k+l-1}(\mu \circ_1 \nu),i_{\sigma(1)},\ldots,i_{\sigma(k+l-1)}}~ a_s,
 	\end{equation}
	where $r_1,\ldots,r_k \in \{1,2,\ldots,n\}$, $t_1,\ldots,t_l \in \{1,2,\ldots,n\}$, and $i_1,\ldots,i_{k+l-1} \in \{1,2,\ldots,n\}$. 
	As a result, the structure constants of $\mathfrak{a}$ corresponding to $\mu \in \mathcal{P}(k)$, $\nu \in \mathcal{P}(l)$, and $(\mu \circ_1 \nu)^\sigma \in \mathcal{P}(k+l-1)$ are given by the following sets:  $\{\alpha^s_{\gamma_k(\mu),r_1,\ldots,r_k}~|~s,r_1,\ldots,r_k~=~1,\ldots,n\}$, $\{\alpha^s_{\gamma_l(\nu),t_1,\ldots,t_l}~|~s,t_1,\ldots,t_l~=~1,\ldots,n\}$, and   $\{\alpha^s_{\gamma_{k+l-1}(\mu \circ_1 \nu),i_{\sigma(1)},\ldots,i_{\sigma(k+l-1)}}~|~s,i_1,\ldots,i_{k+l-1}=1,\ldots,n\}$, respectively. The following lemma shows that these structure constants are related to each other. 

	\begin{lemma}\label{lemma_5.1}
		Let $\p$ be a binary quadratic operad and $\mathfrak{a}$ be a finite-dimensional $\p$-algebra. Then, for any $\mu \in \mathcal{P}(k)$ and $\nu \in \mathcal{P}(l)$, the structure constants of $\mathfrak{a}$ corresponding to $\mu$, $\nu$, and $(\mu \circ_1 \nu)^\sigma$ (as described in the discussion above) are related as follows:
		\begin{equation}\label{lem_freeOperad_struct_rel_gamma2_gamma3_gen}
			\alpha^s_{\gamma_{k+l-1}(\mu \circ_1 \nu),i_{\sigma(1)},\ldots,i_{\sigma(k+l-1)}} = \sum_{p=1}^n \alpha^p_{\gamma_l(\nu),i_{\sigma(1)},\ldots,i_{\sigma(l)}} \alpha^s_{\gamma_k(\mu),p,i_{\sigma(l+1)},\ldots,i_{\sigma(k+l-1)}},
		\end{equation}
		for any \(s = 1, 2, \ldots, n\). 
	\end{lemma}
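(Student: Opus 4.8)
The plan is to observe that, although \eqref{lem_freeOperad_struct_rel_gamma2_gamma3_gen} is stated for a binary quadratic operad, the identity holds for an arbitrary operad and is simply a consequence of $\gamma\colon\p\to\mathrm{End}_\fa$ being a morphism of operads, unravelled through the explicit descriptions of the partial composition \eqref{def_partial_comp_end_operad} and of the symmetric group action \eqref{def_end_opd_sym_action} in the endomorphism operad. Accordingly, the entire proof is one chain of equalities obtained by evaluating $\gamma_{k+l-1}\big((\mu\circ_1\nu)^\sigma\big)$ on the basis vectors $a_{i_1},\ldots,a_{i_{k+l-1}}$ and then comparing coefficients.

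First I would rewrite the element $\gamma_{k+l-1}\big((\mu\circ_1\nu)^\sigma\big)$. Equivariance of the operad morphism $\gamma$ (the right-hand square of Diagram \ref{morphism_diagram}) gives $\gamma_{k+l-1}\big((\mu\circ_1\nu)^\sigma\big)=\gamma_{k+l-1}(\mu\circ_1\nu)\cdot\sigma$, and by \eqref{def_end_opd_sym_action} evaluating this at $(a_{i_1},\ldots,a_{i_{k+l-1}})$ is the same as evaluating $\gamma_{k+l-1}(\mu\circ_1\nu)$ at $(a_{i_{\sigma(1)}},\ldots,a_{i_{\sigma(k+l-1)}})$. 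Next, compatibility of $\gamma$ with the partial compositions (the left-hand square of Diagram \ref{morphism_diagram}) yields $\gamma_{k+l-1}(\mu\circ_1\nu)=\gamma_k(\mu)\circ_1\gamma_l(\nu)$, so by the substitution formula \eqref{def_partial_comp_end_operad} for $\circ_1$ in $\mathrm{End}_\fa$ the expression becomes
\[
\gamma_k(\mu)\big(\gamma_l(\nu)(a_{i_{\sigma(1)}},\ldots,a_{i_{\sigma(l)}}),\ a_{i_{\sigma(l+1)}},\ldots,a_{i_{\sigma(k+l-1)}}\big).
\]

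Then I would substitute the structure-constant expansion \eqref{eqn_freeOperad_struc_const_2} for $\gamma_l(\nu)(a_{i_{\sigma(1)}},\ldots,a_{i_{\sigma(l)}})=\sum_p \alpha^p_{\gamma_l(\nu),i_{\sigma(1)},\ldots,i_{\sigma(l)}}a_p$, use multilinearity of $\gamma_k(\mu)$ to pull the scalars $\alpha^p_{\gamma_l(\nu),i_{\sigma(1)},\ldots,i_{\sigma(l)}}$ outside, and expand $\gamma_k(\mu)(a_p,a_{i_{\sigma(l+1)}},\ldots,a_{i_{\sigma(k+l-1)}})$ once more by \eqref{eqn_freeOperad_struc_const_2}. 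Collecting the coefficient of $a_s$ and matching it with the defining expansion \eqref{eqn_freeOperad_struct_const_gamma3} of $\gamma_{k+l-1}\big((\mu\circ_1\nu)^\sigma\big)(a_{i_1},\ldots,a_{i_{k+l-1}})$, the linear independence of $\{a_1,\ldots,a_n\}$ forces \eqref{lem_freeOperad_struct_rel_gamma2_gamma3_gen}. I do not expect any genuine obstacle here; the only point requiring care is the ordering convention, namely that the permutation $\sigma$ acts on the inputs \emph{before} the partial composition is carried out, which is exactly how the left-hand side of \eqref{lem_freeOperad_struct_rel_gamma2_gamma3_gen} and Equation \eqref{eqn_freeOperad_struct_const_gamma3} are set up.
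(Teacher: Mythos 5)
Your proposal is correct and follows essentially the same route as the paper: expand $\gamma_{k+l-1}\big((\mu\circ_1\nu)^\sigma\big)(a_{i_1},\ldots,a_{i_{k+l-1}})$ as $\gamma_k(\mu)\big(\gamma_l(\nu)(a_{i_{\sigma(1)}},\ldots,a_{i_{\sigma(l)}}),a_{i_{\sigma(l+1)}},\ldots,a_{i_{\sigma(k+l-1)}}\big)$, substitute the structure-constant expansions \eqref{eqn_freeOperad_struc_const_2} twice, and compare coefficients of $a_s$ with \eqref{eqn_freeOperad_struct_const_gamma3}. The only (harmless) differences are that you spell out the justification of the first equality via the two squares of Diagram \eqref{morphism_diagram}, which the paper leaves implicit, and you correctly note the identity needs no quadraticity hypothesis.
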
 
	
	\begin{proof}
		Expanding $\gamma_{k+l-1}\big((\mu \circ_1 \nu)^\sigma\big)(a_{i_1},\ldots,a_{i_{k+l-1}})$ we get
		\begin{eqnarray} 
			\label{eqn_freeOperd_struct_rel_gamma2_gamma3_gen}
			\gamma_{k+l-1}\big((\mu \circ_1 \nu)^\sigma\big)(a_{i_1},\ldots,a_{i_{k+l-1}}) 
			&=&
			\gamma_k(\mu)\big(\gamma_{l}(\nu)(a_{i_{{\sigma(1)}}},\ldots,a_{i_{\sigma(l)}}),a_{i_{\sigma(l+1)}},\ldots, a_{i_{\sigma(k+l-1)}}\big) \nonumber \\
			&\stackrel{(\ref{eqn_freeOperad_struc_const_2})}{=}& 
			\gamma_{k}(\mu)\Big(\sum_{p=1}^{n} \alpha^p_{\gamma_l(\nu),i_{\sigma(1)},\ldots,i_{\sigma(l)}}a_p, a_{i_{\sigma(l+1)}},\ldots,a_{i_{\sigma(k+l-1)}}\Big) \nonumber \\
			&=&
			\sum_{p=1}^{n} \alpha^p_{\gamma_l(\nu),i_{\sigma(1)},\ldots,i_{\sigma(l)}}
			\gamma_{k}(\mu)\big(a_p,a_{i_{\sigma(l+1)}},\ldots,a_{i_{\sigma(k+l-1)}}\big) \nonumber \\
			&\stackrel{(\ref{eqn_freeOperad_struc_const_2})}{=}&			
			\sum_{p=1}^{n} \alpha^p_{\gamma_l(\nu),i_{\sigma(1)},\ldots,i_{\sigma(l)}}
			\Big(\sum_{s=1}^n 	\alpha^s_{\gamma_{k}(\mu),p,i_{\sigma(l+1)},\ldots,i_{\sigma(k+l-1)}}~ a_s \Big)	\nonumber \\
			&=&
			\sum_{s=1}^n 
			\Big(\sum_{p=1}^n 	\alpha^p_{\gamma_l(\nu),i_{\sigma(1)},\ldots,i_{\sigma(l)}}
			~ 	\alpha^s_{\gamma_{k}(\mu),p,i_{\sigma(l+1)},\ldots,i_{\sigma(k+l-1)}} \Big)~ a_s
		\end{eqnarray}
		Thus, from Equation (\ref{eqn_freeOperad_struct_const_gamma3}) and Equation (\ref{eqn_freeOperd_struct_rel_gamma2_gamma3_gen}), we get 
		\begin{equation*}
			\alpha^s_{\gamma_{k+l-1}(\mu \circ_1 \nu),i_{\sigma(1)},\ldots,i_{\sigma(k+l-1)}}
			~=~ 
			\sum_{p=1}^n 	\alpha^p_{\gamma_l(\nu),i_{\sigma(1)},\ldots,i_{\sigma(l)}}	\alpha^s_{\gamma_{k}(\mu),p,i_{\sigma(l+1)},\ldots,i_{\sigma(k+l-1)}}
		\end{equation*}
		for any $s = 1,2,\ldots,n$. Hence, the proof is complete. 
	\end{proof}

	\begin{theorem}\label{thm_2_gen_rest_all}
		Let $\mathcal{P}= \T(E_\p)/(R_\p)$ be a binary quadratic operad. Let $\mathfrak{a}$ be a finite-dimensional $\mathcal{P}$-algebra and $\mathcal{C}(\mathfrak{a})$ be the associated universal algebra given by \eqref{section5_def_universal_algebra}. Then, the ideal $J $ is generated by the universal polynomials corresponding to the basis elements of $\mathcal{P}(2) = E_\p$. 
	\end{theorem}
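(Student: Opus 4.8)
The plan is to compare $J$ with the ideal $J'\subseteq \mathbb{K}\big[X_{si}\mid s,i=1,\ldots,n\big]$ generated by the universal polynomials $P^{(\fa)}_{(\mu,a,i_1,i_2)}$ attached to the basis elements $\mu$ of $\mathcal{P}(2)=E_\p$ and to $a,i_1,i_2\in\{1,\ldots,n\}$. Since $\mu\mapsto P^{(\fa)}_{(\mu,a,i_1,\ldots,i_k)}$ is linear in $\mu$ (each $\gamma_k$ is linear, hence so are the structure constants), $J'$ is unchanged if we use all of $E_\p$ in place of a basis, and because $E_\p=\mathcal{P}(2)$ we trivially have $J'\subseteq J$. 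Thus the content of the theorem is the reverse inclusion $J\subseteq J'$, i.e. that $P^{(\fa)}_{(\mu,a,i_1,\ldots,i_k)}\in J'$ for every $k$ and every $\mu\in\mathcal{P}(k)$. I would prove this by induction on the arity $k$, using repeatedly that $P^{(\fa)}_{(\mu,a,i_1,\ldots,i_k)}\in J'$ is equivalent to the relation
\begin{equation*}
  \sum_{u=1}^{n}\alpha^{u}_{\gamma_k(\mu),i_1,\ldots,i_k}\,x_{au}
  =\sum_{s_1,\ldots,s_k=1}^{n}\alpha^{a}_{\gamma_k(\mu),s_1,\ldots,s_k}\,x_{s_1i_1}\cdots x_{s_ki_k}
\end{equation*}
holding in the quotient ring $R:=\mathbb{K}[X_{si}]/J'$, where $x_{si}$ denotes the class of $X_{si}$.

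Two preliminary observations set up the induction. First, for $\sigma\in\Sm_k$ the equivariance of the $\p$-algebra structure (the second square in \eqref{morphism_diagram}) gives $\alpha^{s}_{\gamma_k(\mu\cdot\sigma),i_1,\ldots,i_k}=\alpha^{s}_{\gamma_k(\mu),i_{\sigma(1)},\ldots,i_{\sigma(k)}}$, and a relabelling of the dummy indices in \eqref{def_universal_poly_a} (legitimate because the polynomial ring is commutative) yields
\begin{equation*}
  P^{(\fa)}_{(\mu\cdot\sigma,\,a,\,i_1,\ldots,i_k)}=P^{(\fa)}_{(\mu,\,a,\,i_{\sigma(1)},\ldots,i_{\sigma(k)})};
\end{equation*}
so the whole family $\{P^{(\fa)}_{(\mu\cdot\sigma,a,i_1,\ldots,i_k)}\}$ lies in $J'$ as soon as $\{P^{(\fa)}_{(\mu,a,i_1,\ldots,i_k)}\}$ does. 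Second, because $\mathcal{P}=\T(E_\p)/(R_\p)$ is binary quadratic, the map $\T(E_\p)\twoheadrightarrow\mathcal{P}$ is onto; hence $\mathcal{P}(0)=0$, $\mathcal{P}(1)=\K\mathbbm{1}$, and for $k\ge 3$ the space $\mathcal{P}(k)$ is spanned by treewise compositions of elements of $E_\p$ (cf. Example \ref{EXAMPLE}). Splitting such a tree at its binary root, $\omega=\beta\circ(\omega_1,\omega_2)$ with $\beta\in E_\p$ and $\omega_1,\omega_2$ having $p$ and $k-p$ leaves, one rewrites $\omega$ as $(\beta\circ_2\omega_2)\circ_1\omega_1$ when $p\ge 2$ and as $\beta\circ_2\omega_2$ when $p=1$ (so $\omega_1=\mathbbm{1}$); in either case $\omega$ is a partial composition of two elements of arity $<k$, and equivariance moves the composition into the first slot at the cost of a symmetric-group twist. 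Therefore every $\mu\in\mathcal{P}(k)$ with $k\ge 3$ is a linear combination of elements of the form $(\mu'\circ_1\nu')\cdot\sigma$ with $\mu',\nu'$ of arity $<k$ and $\sigma\in\Sm_k$.

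The inductive step proper is the crux, and it is exactly where Lemma \ref{lemma_5.1} is used. Assume $\mu\in\mathcal{P}(k)$ and $\nu\in\mathcal{P}(l)$ have all their universal polynomials in $J'$, i.e. the displayed $R$-relation above holds for $\mu$ and for $\nu$; I claim it then holds for $\mu\circ_1\nu$ as well. Starting from $\sum_u\alpha^{u}_{\gamma_{k+l-1}(\mu\circ_1\nu),i_1,\ldots,i_{k+l-1}}x_{au}$, one substitutes for the structure constant of $\mu\circ_1\nu$ using \eqref{lem_freeOperad_struct_rel_gamma2_gamma3_gen} with $\sigma=\mathrm{id}$, applies the $\mu$-relation in $R$, then the $\nu$-relation in $R$, and finally reassembles by invoking \eqref{lem_freeOperad_struct_rel_gamma2_gamma3_gen} once more to recognise the outcome as $\sum_{s_1,\ldots,s_{k+l-1}}\alpha^{a}_{\gamma_{k+l-1}(\mu\circ_1\nu),s_1,\ldots,s_{k+l-1}}x_{s_1i_1}\cdots x_{s_{k+l-1}i_{k+l-1}}$. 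This is the arbitrary-arity version of the computation which, in the binary case $k=l=2$, shows that $P^{(\fa)}_{((\mu\circ_1\nu)\cdot\sigma,\,a,\,i_1,i_2,i_3)}=0$ whenever the universal polynomials of $\mu$ and of $\nu$ vanish. I expect the careful tracking of the summation indices through these substitutions (and of the points where commutativity of $R$ is invoked) to be the only real obstacle; everything else is formal.

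With these pieces in hand the induction closes quickly. For $k\le 2$: $\mathcal{P}(0)=0$ contributes no generators, $P^{(\fa)}_{(\mathbbm{1},a,i_1)}=X_{ai_1}-X_{ai_1}=0\in J'$, and for $k=2$ linearity in $\mu$ reduces matters to the basis elements of $E_\p$, which generate $J'$ by definition. For $k\ge 3$ it suffices, by linearity, to treat $\mu=(\mu'\circ_1\nu')\cdot\sigma$ with $\mu',\nu'$ of arity $<k$; the induction hypothesis places all universal polynomials of $\mu'$ and of $\nu'$ in $J'$, the inductive step then puts all universal polynomials of $\mu'\circ_1\nu'$ in $J'$, and the first preliminary observation gives $P^{(\fa)}_{(\mu,a,i_1,\ldots,i_k)}\in J'$. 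Hence $J\subseteq J'$, so $J=J'$ and the ideal $J$ is generated by the universal polynomials coming from the basis elements of $\mathcal{P}(2)$.
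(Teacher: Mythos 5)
Your proposal is correct and follows essentially the same route as the paper: the same reduction to showing $P^{(\fa)}_{(\mu,a,i_1,\ldots,i_k)}$ lies in the degree-$2$ ideal, the same inductive step via the structure-constant identity of Lemma \ref{lemma_5.1} (substitute, apply the $\mu$-relation and the $\nu$-relation in the quotient, reassemble), and the same induction on arity using that every element of $\mathcal{P}(k)$ for $k\ge 3$ is a combination of twisted partial composites $(\mu'\circ_1\nu')^\sigma$ of lower-arity elements. Your treatment is slightly more explicit than the paper's about the base cases $k\le 1$ and about why the $\Sm_k$-twist is harmless, but the substance is identical.
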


	\begin{proof}
		Let $\mu \in \p(k),~\nu \in \p(l)$ and assume that the universal polynomials $P^\mathfrak{(a)}_{(\mu; a, r_1, \ldots, r_k)},~P^\mathfrak{(a)}_{(\nu; b, t_1, \ldots, t_l)} \in J$ for any $r_1,\ldots,r_k, t_1,\ldots,t_l =1,\ldots,n$. Therefore, using Equation \eqref{def_universal_poly_a}, we have
		\begin{equation} \label{freeOperad_eqn_univ_poly_rel_P2_gen1}
			\sum_{u=1}^n \alpha^u_{\gamma_k(\mu),r_1,\ldots,r_k} x_{au} 
			~=
			\sum_{s_1,\ldots,s_k=1}^n 
			\alpha^a_{\gamma_k(\mu),s_1,\ldots,s_k} x_{s_1r_1} \cdots x_{s_kr_k},
		\end{equation}
		\begin{equation} 
		\label{freeOperad_eqn_univ_poly_rel_P2_gen2}
			\sum_{u=1}^n \alpha^u_{\gamma_l(\nu),t_1,\ldots,t_l} x_{bu} 
			~=
			\sum_{s_1,\ldots,s_l=1}^n 
			\alpha^b_{\gamma_l(\nu),s_1,\ldots,s_l} x_{s_1t_1} \cdots x_{s_lt_l}, 
		\end{equation} 	
		where $1 \le a,r_1,\ldots, r_k, b,t_1,\ldots,t_l \le n$. For $\sigma \in \Sm_{k+l-1}$, using Equation \eqref{def_universal_poly_a} we obtain
 		\begin{multline*}
			P^\mathfrak{(a)}_{((\mu \circ_1 \nu)^\sigma;c,i_1,\ldots,i_{k+l-1})}
			~=~
			\overbrace{\sum_{u=1}^n \alpha^u_{\gamma_{k+l-1}(\mu \circ_1 \nu),i_{\sigma(1)},\ldots,i_{\sigma(k+l-1)}} x_{cu}}^{(A)} \\
			-
			\underbrace{\sum_{s_{1},\ldots,s_{k+l-1}=1}^n 
				\alpha^c_{\gamma_{k+l-1}(\mu\circ_1 \nu),s_1,\ldots,s_{k+l-1}} x_{s_1i_{\sigma(1)}} \cdots x_{s_{k+l-1}i_{\sigma(k+l-1)}}}_{(B)}, \qquad
		\end{multline*}
		where $1 \le c,i_1,\ldots,i_{k+l-1} \le n$. Expanding $(A)$ we get
		\begin{align*}
			\sum_{u=1}^n &\alpha^u_{\gamma_{k+l-1}(\mu \circ_1 \nu),i_{\sigma(1)},\ldots,i_{\sigma(k+l-1)}} x_{cu} \\
			&\stackrel{(\ref{lem_freeOperad_struct_rel_gamma2_gamma3_gen})}{=} 
			\sum_{u=1}^n \Big(\sum_{p=1}^n 	\alpha^p_{\gamma_l(\nu),i_{\sigma(1)},\ldots,i_{\sigma(l)}}
			\alpha^u_{\gamma_{k}(\mu),p,i_{\sigma(l+1)},\ldots,i_{\sigma(k+l-1)}}\Big)x_{cu} \\
			&\stackrel{(\ref{freeOperad_eqn_univ_poly_rel_P2_gen1})}{=} 
			\sum_{p=1}^n \alpha^p_{\gamma_l(\nu),i_{\sigma(1)},\ldots,i_{\sigma(l)}}
			\Big(\sum_{s_1,\ldots,s_k=1}^n 
			\alpha^c_{\gamma_k(\mu),s_1,\ldots,s_k} x_{s_1p}x_{s_2i_{\sigma(l+1)}}\ldots 
			x_{s_k i_{\sigma(k+l-1)}}\Big).
		\end{align*}
		Similarly, expanding $(B)$ we obtain the following expression. 
		\begin{eqnarray*}
			&~& \hspace{-2cm}
			\sum_{s_1,\ldots,s_{k+l-1}=1}^n 
			\alpha^c_{\gamma_{k+l-1}(\mu \circ_1 \nu),s_1,\ldots,s_{k+l-1}} x_{s_1i_{\sigma(1)}}\ldots x_{s_{k+l-1}i_{\sigma(k+l-1)}} \\
			&\stackrel{(\ref{lem_freeOperad_struct_rel_gamma2_gamma3_gen})}{=}&
			\sum_{s_1,\ldots,s_{k+l-1}=1}^n  \Big(\sum_{s=1}^n 	\alpha^s_{\gamma_l(\nu),s_1,\ldots,s_l}~
			\alpha^c_{\gamma_{k}(\mu),s,s_{l+1},\ldots,s_{k+l-1}}\Big)x_{s_1i_{\sigma(1)}}\ldots x_{s_{k+l-1}i_{\sigma(k+l-1)}} \\
			&\stackrel{\quad}{=}&\sum_{s_1,\ldots,s_{k+l-1}=1}^n \sum_{s=1}^n \alpha^s_{\gamma_l(\nu),s_1,\ldots,s_l} x_{s_1i_{\sigma(1)}}\ldots x_{s_li_{\sigma(l)}} ~\alpha^c_{\gamma_k(\mu),s,s_{l+1},\ldots,s_{k+l-1}}x_{s_{l+1}i_{\sigma(l+1)}} \ldots x_{s_{k+l-1}i_{\sigma(k+l-1)}}\\
			&\stackrel{(\ref{freeOperad_eqn_univ_poly_rel_P2_gen2})}{=}&
			\sum_{s=1}^n \Big(\sum_{p=1}^n\alpha^p_{\gamma_l(\nu),i_{\sigma(1)},\ldots,i_{\sigma(l)}}x_{sp}\Big)\Big(\sum_{s_{l+1},\ldots,s_{k+l-1}=1}^n\alpha^c_{\gamma_k(\mu),s,s_{l+1},\ldots,s_{k+l-1}}
			x_{s_{l+1}i_{\sigma(l+1)}} \cdots x_{s_{k+l-1}i_{\sigma(k+l-1)}}\Big)\\
			&=&
			\sum_{p=1}^n \alpha^p_{\gamma_l(\nu),i_{\sigma(1)},\ldots,i_{\sigma(l)}} 
			\Big(\sum_{s,s_{l+1},\ldots,s_{k+l-1}=1}^n
			\alpha^c_{\gamma_k(\mu),s,s_{l+1},\ldots,s_{k+l-1}}x_{sp}x_{s_{l+1}i_{\sigma(l+1)}} \cdots x_{s_{k+l-1}i_{\sigma(k+l-1)}}\Big).
		\end{eqnarray*} 
		As a result, we obtain 
		$$P^{(\mathfrak{a})}_{((\mu \circ_1 \nu)^\sigma;c,i_1,\ldots,i_{k+l-1})} = 0 ~ \text{ in } \mathcal{C}(\mathfrak{a}),$$ 
		for any $\sigma \in \Sm_{k+l-1}$ and $c,i_1,\ldots,i_{k+l-1} = 1,\ldots,n$. 
		
		\medskip
		Now, recall that any basis element of $\mathcal{P}(n)$ ($n \ge 3$) is of the form $(\mu \circ_1 \nu)^\sigma$, where $\mu$ and $\nu$ have arities less than $n$. Therefore, using mathematical induction on the arity of $\mathcal{P}$, we obtain the claim of the theorem.
	\end{proof}

	\begin{remark}\label{remark_5.3}
		We observe that Lemma \ref{lemma_5.1} and Theorem \ref{thm_2_gen_rest_all} extends naturally to the context of $k$-ary quadratic operads and graded (quadratic) operads. 
	\end{remark}

\medskip
\subsubsection{\it The operad $\mathcal{C}om$}~~\\

	\vspace{-0.4cm}
	Algebras over the operad $\mathcal{C}om$ are (non-unital) commutative, associative algebras. 
	The quadratic data given by $(E_{\mathcal{C}om}, R_{\mathcal{C}om})$ defines the operad $\mathcal{C}om := \T(E_{\mathcal{C}om})/(R_{\mathcal{C}om})$, where $E_{\mathcal{C}om}$ is the $\mathbb{S}$-module concentrated in arity $2$ with trivial $\Sm_2$-action
	\begin{equation*}
		E_{\mathcal{C}om} := (0, 0, \mathbb{K}\mu, 0, \ldots),
	\end{equation*}
	and $(R_{\mathcal{C}om})$ is the operadic ideal generated by $R_{\mathcal{C}om}$, which is a  $\mathbb{S}_3$-submodule of $\mathbb{T}(E_{\mathcal{C}om})(3)$ generated by the associator $\mu \circ_1 \mu - \mu \circ_2 \mu$.
	
	\medskip 
	Let $\mathfrak{c}$ be a finite-dimensional $\mathcal{C}om$-algebra. Also, let the set of scalars $\{\alpha^s_{i,j} ~|~ s,i,j = 1,\ldots,n\}$ denotes the structure constants of $\mathfrak{c}$ corresponding to $\mu$. Now, using Definition \ref{def_universal_algebra} and Theorem \ref{thm_2_gen_rest_all} we get that the universal algebra $\mathcal{C}(\mathfrak{c})$ of $\mathfrak{c}$ is given by:   
	\begin{equation*}
		\mathcal{C}(\mathfrak{c}):= \K[X_{si}:s,i=1,\ldots,n] \big/ J,
	\end{equation*}
 	where $J$ is the ideal generated by the universal polynomials of $\mathfrak{c}$ corresponding to the basis element $\mu \in \K\mu = \mathcal{C}om(2)$:
 	\begin{equation}
 		P^{(\mathfrak{c})}_{(a,i_1,i_2)} = \sum_{u=1}^n \alpha^u_{i_1, i_2} X_{au} - \sum_{s_1, s_2 = 1}^n \alpha^a_{s_1, s_2} X_{s_1 i_1} X_{s_2 i_2},
 	\end{equation}
 	for any $a, i_1, i_2 = 1, \ldots, n$. 
	
\bigskip	
\subsubsection{\it The operad $\mathcal{A}ss$}~~\\

	\vspace{-0.4cm}
	Algebras over the operad $\mathcal{A}ss$ are (non-unital) associative algebras. The quadratic data $(E_{\mathcal{A}ss},R_{\mathcal{A}ss})$, describing the operad $\mathcal{A}ss := \T(E_{\mathcal{A}ss})/(R_{\mathcal{A}ss})$, is defined as follows. $E_{\mathcal{A}ss}$ is the $\Sm$-module 
	$(0,0,\K[\Sm_2],0,\ldots)$
	and the space of relations $R_{\mathcal{A}ss}$ is the $\Sm_3$-submodule of $\mathbb{T}(E_{\mathcal{A}ss})(3)$ generated by the following element: 
	$$\mu \circ_1 \mu - \mu \circ_2 \mu.$$ 
	Note that, a $\K$-linear basis $\mathbb{T}(E_{\mathcal{A}ss})(3)$ is given by $\{(\mu \circ_1 \mu)^\sigma,(\mu \circ_2 \mu)^\sigma: \sigma \in \Sm_3\}$ and a $\K$-linear basis of $R_{\mathcal{A}ss}$ is given by $\{(\mu \circ_1 \mu - \mu \circ_2 \mu)^\sigma : \sigma \in \Sm_3\}$. For further details, see \cite[Section 7.6.4]{Loday-Vallette}. Now, taking the operad to be $\mathcal{A}ss$ in Theorem \ref{thm_left_adjoint} we obtain the following corollary.
	\begin{corollary}{\cite[Theorem 1.1]{Mil}}
		Let $A$ be a (non-unital) associative algebra. Then the functor $A \otimes - : \mathrm{ComAlg} \to \mathrm{Alg}$ admits a left adjoint if and only if $A$ is finite-dimensional. 
	\end{corollary}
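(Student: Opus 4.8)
The plan is to obtain this corollary as a direct specialisation of Theorem~\ref{thm_left_adjoint} to the operad $\p = \mathcal{A}ss$. The two points to pin down are: (i) that the category $\mathcal{A}ss$-Alg of algebras over $\mathcal{A}ss$ is, as a category, the category $\mathrm{Alg}$ of (non-unital) associative algebras; and (ii) that under this identification the functor $A \otimes -$ supplied by Proposition~\ref{funct_com_to_p-alg} agrees on the nose with the ordinary ``tensor with $C$'' functor. Once these are in place, the statement is exactly Theorem~\ref{thm_left_adjoint} rephrased.

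For (i), I would recall the standard description of $\mathcal{A}ss$-algebras: an operad morphism $\gamma : \mathcal{A}ss \to \mathrm{End}_A$ is completely determined by the bilinear map $\gamma_2(\mu) : A \otimes A \to A$, where $\mu = \mathrm{id}_{\Sm_2}$ is a basis element of $\mathcal{A}ss(2) = \K[\Sm_2]$, because $\mathcal{A}ss$ is generated in arity $2$; the single quadratic relation $\mu \circ_1 \mu - \mu \circ_2 \mu$ generating $(R_{\mathcal{A}ss})$ (cf.\ Example~\ref{ass}) translates precisely to associativity of $\gamma_2(\mu)$, and conversely any associative product on $A$ extends uniquely along the free operad and factors through $\mathcal{A}ss$. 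Since a $\p$-algebra morphism is by definition a linear map commuting with the images of all operations — equivalently, with the single generator $\mu$ — this is nothing but an associative algebra homomorphism. Hence one gets an isomorphism of categories $\mathcal{A}ss$-Alg $\cong \mathrm{Alg}$ that is the identity on underlying vector spaces and on morphisms.

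For (ii), I would simply unwind Equation~\eqref{morph} at $k = 2$ and $\mu = \mathrm{id}_{\Sm_2}$: the induced $\mathcal{A}ss$-structure $\overline{\gamma}$ on $A \otimes C$ satisfies
\[
\overline{\gamma}_2(\mu)(a \otimes c,\, a' \otimes c') = \gamma_2(\mu)(a,a') \otimes c c' = (a a') \otimes (c c'),
\]
which is exactly the usual multiplication of the tensor-product algebra $A \otimes C$; and on a morphism $f : C \to D$ the functor of Proposition~\ref{funct_com_to_p-alg} sends $a \otimes c \mapsto a \otimes f(c)$, i.e.\ it is $\mathrm{id}_A \otimes f$, again the standard thing. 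Therefore the composite of $A \otimes - : \mathrm{ComAlg} \to \mathcal{A}ss$-Alg with the isomorphism $\mathcal{A}ss$-Alg $\cong \mathrm{Alg}$ is precisely the functor $A \otimes - : \mathrm{ComAlg} \to \mathrm{Alg}$ appearing in the statement.

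Finally, I would invoke Theorem~\ref{thm_left_adjoint} with $\p = \mathcal{A}ss$ and $\fa = A$: it says $A \otimes - : \mathrm{ComAlg} \to \mathcal{A}ss$-Alg has a left adjoint iff $A$ is finite-dimensional (the ``if'' direction being the explicit construction of $\mathcal{C}(A,-)$, the ``only if'' direction using that a left adjoint preserves the product of countably many copies of $\K$). Transporting this along the isomorphism of categories gives the corollary, reproving \cite[Theorem~1.1]{Mil}. The only delicate point — hence the main obstacle — is the categorical bookkeeping in steps (i)--(ii): one must verify that $\mathcal{A}ss$-Alg is literally identified with $\mathrm{Alg}$ (not merely equivalent to it) and that the two functors coincide rather than being merely naturally isomorphic, so that the property ``admits a left adjoint'' transfers verbatim; after that the result is immediate.
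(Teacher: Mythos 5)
Your proposal is correct and follows exactly the paper's route: the paper obtains this corollary simply by taking $\p = \mathcal{A}ss$ in Theorem \ref{thm_left_adjoint}, using the identification of $\mathcal{A}ss$-algebras with (non-unital) associative algebras. Your steps (i)--(ii) merely make explicit the categorical identification that the paper leaves implicit, so the argument is the same, only more fully justified.
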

	
	Furthermore, using Theorem \ref{thm_2_gen_rest_all} we obtain the following definition.
	\begin{definition}
		Let $A$ be a finite-dimensional associative algebra with structure constants $\{\alpha_{i,j}^s~|~ s,i,j = 1,\ldots,n\}$. Then, the universal polynomials of $A$ are of the form: 
		\begin{equation}
			P^{(A)}_{(a,i_1,i_2)} = \sum_{u=1}^n \alpha^u_{i_1, i_2} X_{au} - \sum_{s_1, s_2 = 1}^n \alpha^a_{s_1, s_2} X_{s_1 i_1} X_{s_2 i_2},
		\end{equation} 
		for any $a, i_1, i_2 = 1, \ldots, n$. The universal algebra $\mathcal{C}(A)$ of $A$ is given by 
		$\mathcal{C}(A) := \K[X_{si}~|~ s,i=1,\ldots,n] \big/J,$
		where $J$ is the ideal generated by the universal polynomials of $A$. 
	\end{definition}
	
	\begin{remark}
		Note that the universal algebra $\mathcal{C}(A)$ described above coincides with the ``quantum symmetry semigroup" $a(A)$ of $A$, which was introduced by G. Militaru in \cite[Definition 1.3]{Mil}.
	\end{remark}

\smallskip
\subsubsection{\it The operad $\mathcal{L}eib$}~~\\

	\vspace{-0.4cm}
	Any algebra over the operad $\mathcal{L}eib := \mathbb{T}(E_{\mathcal{L}eib})/(R_{\mathcal{L}eib})$ is a Leibniz algebras. The $\Sm$-module $E_{\mathcal{L}eib}$ is given by
	$E_{\mathcal{L}eib} := (0,0,\K[\Sm_2],0,\ldots)$
	and the space of relations $R_{\mathcal{L}eib}$ is the $\Sm_3$-submodule of $\mathbb{T}(E_{\mathcal{L}eib})(3)$ generated by 
	\begin{equation}\label{quad_rel_lbz}
		\mu \circ_1 \mu -\mu \circ_2 \mu - (\mu \circ_1 \mu)^{(23)}.
	\end{equation}
	Now, taking the operad $\mathcal{L}eib$ in Theorem \ref{thm_left_adjoint} we obtain the following corollary. 

	\begin{corollary}{\cite[Theorem 2.1]{Ag-Mil}}\label{cor_leib}
		Let $\mathfrak{h}$ be a Leibniz algebra. Then the functor $\mathfrak{h} \otimes - : \mathrm{ComAlg} \to \mathrm{Lbz}$ admits a left adjoint if and only if $\mathfrak{h}$ is finite-dimensional.  
	\end{corollary}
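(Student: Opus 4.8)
The plan is to obtain this statement as the specialization of Theorem~\ref{thm_left_adjoint} to the operad $\p = \mathcal{L}eib$. The first step is to identify the category $\mathrm{Lbz}$ of Leibniz algebras with the category $\mathcal{L}eib\text{-Alg}$. Unpacking a $\mathcal{L}eib$-algebra structure $\gamma \colon \mathcal{L}eib \to \mathrm{End}_{\mathfrak{h}}$, the arity-two generator $\mu \in \mathcal{L}eib(2)$ is sent to a bilinear operation $[-,-] := \gamma_2(\mu) \in \mathrm{Hom}(\mathfrak{h}^{\otimes 2},\mathfrak{h})$, the unit axiom forces $\gamma_1(\mathbbm{1}) = \mathrm{id}_{\mathfrak{h}}$, and — using the description of partial composition in the endomorphism operad from Example~\ref{ExEnd} together with the $\Sm_3$-action — the defining quadratic relation \eqref{quad_rel_lbz} is carried by $\gamma_3$ precisely onto the (right) Leibniz identity $[[x,y],z] = [[x,z],y] + [x,[y,z]]$. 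Since these relations generate the operadic ideal $(R_{\mathcal{L}eib})$, a vector space admits a $\mathcal{L}eib$-algebra structure if and only if it is a Leibniz algebra, and a linear map is a $\mathcal{L}eib$-algebra morphism if and only if it preserves the bracket; hence $\mathrm{Lbz} \cong \mathcal{L}eib\text{-Alg}$ as categories.

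Next I would apply Proposition~\ref{funct_com_to_p-alg} with $\p = \mathcal{L}eib$ and $\fa = \mathfrak{h}$ to produce the functor $\mathfrak{h} \otimes - \colon \mathrm{ComAlg} \to \mathcal{L}eib\text{-Alg}$, and check that under the identification above it agrees with the functor named in the statement: by Equation~\eqref{morph}, for $C \in \mathrm{ComAlg}$ the induced bracket on $\mathfrak{h} \otimes C$ is $[a \otimes c,\, a' \otimes c'] = [a,a'] \otimes cc'$, and a morphism $f \colon C \to D$ in $\mathrm{ComAlg}$ induces $\mathrm{id}_{\mathfrak{h}} \otimes f$, which is exactly the usual scalar-extension functor for Leibniz algebras.

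The last step is then a one-line invocation of Theorem~\ref{thm_left_adjoint}: this functor admits a left adjoint if and only if $\mathfrak{h}$ is finite-dimensional, which is precisely the assertion of the corollary. I do not anticipate any genuine obstacle here; the only point deserving care is the verification in the first step that the operadic relation \eqref{quad_rel_lbz}, once pushed through an operad morphism into $\mathrm{End}_{\mathfrak{h}}$, reproduces the Leibniz identity and nothing more, since everything else is a routine transcription of the machinery developed in Section~\ref{sec-3}.
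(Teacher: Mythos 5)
Your proposal is correct and follows exactly the paper's route: the paper obtains this corollary simply by specializing Theorem~\ref{thm_left_adjoint} (via Proposition~\ref{funct_com_to_p-alg}) to the operad $\mathcal{L}eib$, with the identification $\mathrm{Lbz}\cong\mathcal{L}eib\text{-Alg}$ left implicit. The extra care you take in checking that the quadratic relation \eqref{quad_rel_lbz} transcribes to the Leibniz identity is a sound (if unstated in the paper) verification, not a deviation.
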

 	Furthermore, using Theorem \ref{thm_2_gen_rest_all} we obtain the following description of the universal algebra of a finite-dimensional Leibniz algebra. 
 	\begin{definition}
 		Let $\mathfrak{h}$ be a finite-dimensional Leibniz algebra with structure constants $\{\alpha_{i,j}^s~|~s,i,j=1,\ldots,n\}$. Then, universal algebra $\mathcal{C}(\mathfrak{h})$ of $\mathfrak{h}$ is given by $\mathcal{C}(\mathfrak{h}):= \K[x_{si} : s,i =1\ldots,n]/J$ of $\mathfrak{h}$, where $J$ is the ideal generated by the following universal polynomials: 
 		\begin{equation}\label{univ_poly_lbz}
 			P^{(\mathfrak{h})}_{(a,i_1,i_2)} = \sum_{u=1}^n \alpha^u_{i_1, i_2} X_{au} - \sum_{s_1, s_2 = 1}^n \alpha^a_{s_1, s_2} X_{s_1 i_1} X_{s_2 i_2},
 		\end{equation} 
 		for any $a, i_1, i_2 = 1, \ldots, n$. 
 	\end{definition}
 	
 	\begin{remark}
 		Note that the universal algebra and the universal polynomials of $\mathfrak{h}$ described above are identical with the ones introduced in \cite[Definition 2.6]{Ag-Mil}.
 	\end{remark}
 	
\smallskip	
\subsubsection{\it The operad $\mathcal{L}ie$}~~\\
	
	\vspace{-0.4cm} 
	The operad $\mathcal{L}ie :=  \mathbb{T}(E_{\mathcal{L}ie})/(R_{\mathcal{L}ie})$ corresponds to the category of Lie algebras. The $\Sm$-module $E_{\mathcal{L}ie}$ is defined as
	$E_{\mathcal{L}ie} := (0,0,\K c, 0, \ldots),$	
	where the right $\Sm_2$-action on $\K c$ is given by $c^{(12)} = - c$ (this means that $c$ is an antisymmetric operation). The space of relations $R_{\mathcal{L}ie}$ is the $\Sm_3$-submodule of $\T(E_{\mathcal{L}ie})(3)$ generated by the Jacobiator 
	$$c \circ_1 c ~+~ (c \circ_1 c)^{(123)} + (c \circ_1 c)^{(132)}.$$ 
	Note that a $\K$-linear basis of $\T(E_{\mathcal{L}ie})(3)$ is given by $\{c \circ_1 c, (c \circ_1 c)^{(123)}, (c \circ_1 c)^{(132)}\}$. We refer the reader to \cite[Section 7.6.2]{Loday-Vallette} for further details. 
	
	\medskip
	Let $\mathfrak{g}$ be a finite-dimensional Lie algebra with structure constants $\{\alpha_{i,j}^s ~|~ s,i,j=1,\ldots,n\}$. Then, the universal polynomials of $\mathfrak{g}$ are the same as described in Equation \eqref{univ_poly_lbz}. However, here the polynomials take a rather simplified form, because of the following restrictions on the structure constants: 
	\begin{equation*}
		\alpha_{i,i}^s = 0 \quad \alpha_{i,j}^s = - \alpha_{j,i}^s \quad \text{for all } s,i,j = 1,\ldots,n.
	\end{equation*} 
	Note that Corollary \ref{cor_leib} remains valid in the case of Lie algebras. Additionally, using Theorem \ref{thm_2_gen_rest_all} we obtain that the universal algebra $\mathcal{C}(\mathfrak{g})$ coincides with the ``universal algebra of $\mathfrak{g}$" as constructed in \cite[Definition 2.6]{Ag-Mil}.

\bigskip
\subsubsection{\it The operad $\mathcal{Z}inb$}~~\\

	\vspace{-0.4cm}
	The operad $\mathcal{Z}inb := \mathbb{T}(E_{\mathcal{Z}inb})/(R_{\mathcal{Z}inb})$ is the Koszul dual of the operad $\mathcal{L}eib$. It encodes the category of Zinbiel algebras. The $\Sm$-module $E_{\mathcal{Z}inb}$ is given by
	$E_{\mathcal{Z}inb} := (0,0,\K[\Sm_2],0,\ldots)$	
	and the space of relations $R_{\mathcal{Z}inb}$ is an $\Sm_3$-submodule of $\mathbb{T}(E_{\mathcal{Z}inb})(3)$ generated by 
	\begin{equation}\label{quad_rel_zinb}
		\mu \circ_1 \mu -\mu \circ_2 \mu - (\mu \circ_2 \mu)^{(23)}. 
	\end{equation}
	It is worth noting that the generators of \( R_{\mathcal{L}ieb} \) and \( R_{\mathcal{Z}inb} \), as specified in Equations \eqref{quad_rel_lbz} and \eqref{quad_rel_zinb}, respectively, are distinct from one another.

	\medskip
	Now, using Theorems \ref{thm_left_adjoint} and \ref{thm_2_gen_rest_all}, we present the following result and definition in the context of Zinbiel algebras. 
	\begin{corollary}
		Let $\mathfrak{z}$ be a Zinbiel algebra. Then the functor $\mathfrak{z} \otimes - : \mathrm{ComAlg} \to \mathrm{Zinb}$ admits a left adjoint if and only if $\mathfrak{z}$ is finite-dimensional.  
	\end{corollary}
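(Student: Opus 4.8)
The plan is to simply specialize Theorem \ref{thm_left_adjoint} to the operad $\p = \mathcal{Z}inb$. Since the category $\mathrm{Zinb}$ of Zinbiel algebras is, by definition, the category $\mathcal{Z}inb$-Alg of algebras over the binary quadratic operad $\mathcal{Z}inb = \T(E_{\mathcal{Z}inb})/(R_{\mathcal{Z}inb})$ described above, Proposition \ref{funct_com_to_p-alg} already supplies the functor $\mathfrak{z} \otimes - : \mathrm{ComAlg} \to \mathrm{Zinb}$, and Theorem \ref{thm_left_adjoint} then yields precisely the asserted equivalence. So the argument reduces to observing that the hypotheses of Theorem \ref{thm_left_adjoint} are met, and unwinding what the theorem says in this case.

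For completeness I would recall both implications. For ($\Leftarrow$), assume $\dim \mathfrak{z} = n < \infty$ with basis $\{z_1,\ldots,z_n\}$; writing the structure constants $\{\alpha^s_{i,j} \mid s,i,j = 1,\ldots,n\}$ of the single binary generator of $\mathcal{Z}inb$, one forms $\K[X_{si} \mid s,i=1,\ldots,n]$ modulo the ideal $J$ generated by the universal polynomials $P^{(\mathfrak{z})}_{(a,i_1,i_2)} = \sum_{u=1}^n \alpha^u_{i_1,i_2} X_{au} - \sum_{s_1,s_2=1}^n \alpha^a_{s_1,s_2} X_{s_1 i_1} X_{s_2 i_2}$, and invokes the universal property established in the proof of Theorem \ref{thm_left_adjoint} to identify $\mathcal{C}(\mathfrak{z},-)$ as the desired left adjoint. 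Here Lemma \ref{lemma_5.1} and Theorem \ref{thm_2_gen_rest_all} are what justify restricting to the arity-$2$ generator, so that the explicit description matches that of the other binary quadratic examples. For ($\Rightarrow$), a left adjoint preserves all limits, hence arbitrary products; since products in $\mathcal{Z}inb$-Alg are computed on underlying vector spaces (the general argument for $\p$-Alg in the proof of Theorem \ref{thm_left_adjoint} applies verbatim), requiring that $\mathfrak{z} \otimes -$ preserve the product of countably many copies of $\K$ forces $\mathfrak{z}$ to be finite-dimensional.

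There is no real obstacle here: all the mathematical content lies in Theorem \ref{thm_left_adjoint}, and the only thing to verify — that $\mathcal{Z}inb$ is a genuine symmetric operad whose algebras are exactly Zinbiel algebras — is standard (see \cite[Section 13.5]{Loday-Vallette}). The proof can therefore be written in one or two lines, simply citing Theorem \ref{thm_left_adjoint} with $\p = \mathcal{Z}inb$, with the accompanying definition recording the explicit form of $\mathcal{C}(\mathfrak{z})$ obtained from Theorem \ref{thm_2_gen_rest_all}.
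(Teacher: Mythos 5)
Your proposal is correct and matches the paper exactly: the paper derives this corollary by specializing Theorem \ref{thm_left_adjoint} to the operad $\mathcal{Z}inb$, with Theorem \ref{thm_2_gen_rest_all} supplying the reduction of the universal polynomials to the arity-$2$ generator, which is precisely what you do. The extra unwinding of both implications is harmless but not needed beyond the citation.
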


	\begin{definition}
		Let $\mathfrak{z}$ be a finite-dimensional Zinbiel algebra with structure constants $\{\alpha_{i,j}^s~|~s,i,j=1,\ldots,n\}$. Then, the universal polynomials of $\mathfrak{z}$ are of the form
		\begin{equation}\label{univ_poly_zinb}
			P^{(\mathfrak{z})}_{(\mu,a,i_1,i_2)} = \sum_{u=1}^n \alpha^u_{i_1,i_2} X_{au}
			\hspace{0.2cm} - \sum_{s_1,s_2=1}^n \alpha^a_{s_1,s_2}X_{s_1 i_1} X_{s_2 i_2},
		\end{equation}
		for all $a,i_1,i_2 = 1,\ldots, n$. And the universal algebra $\mathcal{C}(\mathfrak{z})$ of $\mathfrak{z}$ is given by $\K[X_{si}~|~s,i=1,\ldots,n]\big/J$, where $J$ is the ideal generated by the universal polynomials of $\mathfrak{z}$ described in Equation \eqref{univ_poly_zinb}.  
	\end{definition}
	
\smallskip	
\subsubsection{\it The operad $\mathcal{P}ois$}~~\\

	\vspace{-0.4cm}
	The operad encoding Poisson algebras is denoted by $\mathcal{P}ois$. It can be constructed from two operads, $\mathcal{L}ie$ and $\mathcal{C}om$, using the ``distributive law" (cf. \cite[Section 8.6]{Loday-Vallette}). The quadratic presentation of $\mathcal{P}ois$ is given by 
	\begin{equation}\label{def_operad_pois}
		\mathcal{P}ois := \mathbb{T}(E_{\mathcal{L}ie} \oplus E_{\mathcal{C}om})/ (R_{\mathcal{L}ie} \oplus R_{\mathcal{C}om} \oplus D)
	\end{equation} 
	where $D$ is an $\Sm_3$-submodule of $\mathbb{T}(E_{\mathcal{L}ie} \oplus E_{\mathcal{C}om})(3)$ generated by $c \circ_1 \mu - (\mu \circ_1 c)^{(23)} - \mu \circ_2 c$. 
	Now, as a corollary of Theorem \ref{thm_left_adjoint} we obtain the result proved by Agore and Militaru for Poisson algebras. 
	\begin{corollary}\cite[Theorem 2.2]{Ag-Mil2}
		Let $\mathfrak{p}$ be a Poisson algebra. Then the functor $\mathfrak{p} \otimes - : \mathrm{ComAlg} \to \mathrm{Pois}$ admits a left adjoint if and only if $\mathfrak{p}$ is finite-dimensional.  
	\end{corollary}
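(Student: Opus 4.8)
The plan is to deduce this directly from Theorem \ref{thm_left_adjoint} by specializing the operad to $\mathcal{P}ois$; the only thing that needs care is the translation between the operadic and the classical languages.

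First I would recall that, by the quadratic presentation \eqref{def_operad_pois}, a $\mathcal{P}ois$-algebra structure on a vector space $\mathfrak{p}$ is the same datum as a (non-unital) commutative associative multiplication $a\cdot b := \gamma_2(\mu)(a,b)$ together with a Lie bracket $[a,b] := \gamma_2(c)(a,b)$, subject to the compatibility forced by the relator $c\circ_1\mu - (\mu\circ_1 c)^{(23)} - \mu\circ_2 c$, namely the Leibniz identity $[a,b\cdot d] = [a,b]\cdot d + b\cdot[a,d]$. The relations $R_{\mathcal{L}ie}$ and $R_{\mathcal{C}om}$ account for the Jacobi identity and associativity, and since the arity-$2$ part of $\mathcal{P}ois$ is $E_{\mathcal{L}ie}\oplus E_{\mathcal{C}om}$, no further operations appear. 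Hence the category $\mathcal{P}ois$-Alg is isomorphic to the category $\mathrm{Pois}$ of Poisson algebras, compatibly with morphisms (see \cite[Section 8.6]{Loday-Vallette}).

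Next I would check that, under this identification, the functor $\mathfrak{p}\otimes - : \mathrm{ComAlg}\to\mathcal{P}ois$-Alg produced by Proposition \ref{funct_com_to_p-alg} is exactly the one in the statement. Specializing \eqref{morph} to the generators $\mu$ and $c$ gives, for $C\in\mathrm{ComAlg}$, the structure maps $(a\otimes c_1)\cdot(b\otimes c_2) = (a\cdot b)\otimes c_1c_2$ and $[a\otimes c_1,\,b\otimes c_2] = [a,b]\otimes c_1c_2$ on $\mathfrak{p}\otimes C$; by Proposition \ref{funct_com_to_p-alg} these already assemble into a $\mathcal{P}ois$-algebra, and this is precisely the Poisson structure on $\mathfrak{p}\otimes C$ used in \cite{Ag-Mil2}. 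Likewise a morphism $f\colon C\to D$ in $\mathrm{ComAlg}$ is sent to $\mathrm{id}_{\mathfrak{p}}\otimes f$, which matches the functor in the statement. With these identifications, Theorem \ref{thm_left_adjoint} applied to $\p=\mathcal{P}ois$ and $\fa=\mathfrak{p}$ gives both implications of the corollary at once.

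The only genuinely delicate point is the first step: confirming that the operadic notion of a $\mathcal{P}ois$-algebra (and of a $\mathcal{P}ois$-algebra morphism) coincides with the classical notion of a Poisson algebra, including that the distributive-law presentation introduces no hidden relations among $\mu$, $c$. Once this is granted, the remainder is a routine unwinding of \eqref{morph} and an appeal to Theorem \ref{thm_left_adjoint}, so I expect no further obstacle.
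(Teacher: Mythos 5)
Your proposal is correct and follows exactly the route the paper intends: the paper obtains this corollary by specializing Theorem \ref{thm_left_adjoint} to the operad $\mathcal{P}ois$, and your argument simply makes explicit the identification of $\mathcal{P}ois$-algebras with classical Poisson algebras and of the functor from Proposition \ref{funct_com_to_p-alg} with the classical one, details the paper leaves implicit. No gap; the extra care you take with the distributive-law presentation is a reasonable elaboration rather than a different method.
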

	
	Let $\mathfrak{p}$ be a finite-dimensional $\mathcal{P}ois$-algebra with structure constants $\{\alpha_{i,j}^s ~|~ s,i,j=1,\ldots,n\}$ and $\mathcal{C}(\mathfrak{p}) := \K[X_{si}~|~ s,i=1,\ldots,n]/J$ be its universal algebra. Using Theorem \ref{thm_2_gen_rest_all}, we get that the ideal $J$ is generated by the universal polynomials of $\mathfrak{p}$ corresponding to $c$ and $\mu \in \K c \oplus \K \mu = \mathcal{P}ois(2)$:
	\begin{equation}\label{eqn_pois_1}
		P^{(\mathfrak{p})}_{(c,a,i_1,i_2)} = \sum_{u=1}^n \alpha^u_{\gamma_2(c),i_1,i_2} X_{au}
		\hspace{0.2cm} - \sum_{s_1,s_2=1}^n \alpha^a_{\gamma_2(c),s_1,s_2}X_{s_1 i_1}X_{s_2 i_2}
	\end{equation} 
	\begin{equation}\label{eqn_pois_2}
		P^{(\mathfrak{p})}_{(\mu,a,i_1,i_2)} = \sum_{u=1}^n \alpha^u_{\gamma_2(\mu),i_1,i_2} X_{au}
		\hspace{0.2cm} - \sum_{s_1,s_2=1}^n \alpha^a_{\gamma_2(\mu),s_1,s_2}X_{s_1 i_1}X_{s_2 i_2}
	\end{equation} 
	for all $a,i_1,i_2 = 1,\ldots,n$. Note that, the universal polynomials described in Equation (\ref{eqn_pois_1}) and (\ref{eqn_pois_2}) coincides with the universal polynomials given in \cite[Equation (8) and (9), pg. 39]{Ag-Mil2}. Hence, the universal algebra $\mathcal{C}(\mathfrak{p})$ defined here is the same as the ``universal algebra of $\mathfrak{p}$" constructed in \cite[pg. 39]{Ag-Mil2}.

\bigskip
\subsubsection{\it The operad $pre\mathcal{L}ie$}~~\\
	
	\vspace{-0.4cm}
	The notion of pre-Lie algebras appeared first in the study of Hochschild cohomology and in studying flat affine connections on a given manifold. The operad $pre\mathcal{L}ie := \mathbb{T}(E_{pre\mathcal{L}ie})/(R_{pre\mathcal{L}ie})$ encodes the category of pre-Lie algebras. The $\Sm$-module $E_{pre\mathcal{L}ie}$ is given by 
	$E_{pre\mathcal{L}ie} := (0,0,\K[\Sm_2],0,\ldots)$	
	and the space of relations $R_{pre\mathcal{L}ie}$ is an $\Sm_3$-submodule of $\mathbb{T}(E_{pre\mathcal{L}ie})(3)$ generated by 
	$$\mu \circ_1 \mu -\mu \circ_2 \mu - (\mu \circ_1 \mu)^{(23)} + (\mu \circ_2 \mu)^{(23)}.$$ 
	
	\begin{corollary}
		Let $\mathfrak{l}$ be a pre-Lie algebra. Then the functor $\mathfrak{l} \otimes - : \mathrm{ComAlg} \to \mathrm{pre}\text{-}\mathrm{Lie}$ admits a left adjoint if and only if $\mathfrak{p}$ is finite-dimensional.
	\end{corollary}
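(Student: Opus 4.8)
The plan is to obtain this statement as an immediate specialization of Theorem~\ref{thm_left_adjoint} to the operad $\mathcal{P} = pre\mathcal{L}ie$. First I would recall that, by the quadratic presentation $pre\mathcal{L}ie = \mathbb{T}(E_{pre\mathcal{L}ie})/(R_{pre\mathcal{L}ie})$ given just above, a $pre\mathcal{L}ie$-algebra structure on a vector space $\mathfrak{l}$ is the same datum as a binary operation on $\mathfrak{l}$ satisfying the left pre-Lie identity (the image of the single generator of $R_{pre\mathcal{L}ie}$ under the structure morphism $\gamma \colon pre\mathcal{L}ie \to \mathrm{End}_{\mathfrak{l}}$ vanishing), and that a morphism of $pre\mathcal{L}ie$-algebras is exactly a linear map commuting with this operation. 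Hence the category $pre\mathcal{L}ie\text{-}\mathrm{Alg}$ is (isomorphic to) the category $\mathrm{pre}\text{-}\mathrm{Lie}$, and the functor $\mathfrak{l}\otimes - \colon \mathrm{ComAlg} \to \mathrm{pre}\text{-}\mathrm{Lie}$ appearing in the statement is precisely the functor $\mathfrak{l}\otimes - \colon \mathrm{ComAlg}\to pre\mathcal{L}ie\text{-}\mathrm{Alg}$ produced by Proposition~\ref{funct_com_to_p-alg}.

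With that identification in place, the corollary is a verbatim instance of Theorem~\ref{thm_left_adjoint}: for a $pre\mathcal{L}ie$-algebra $\mathfrak{l}$, the functor $\mathfrak{l}\otimes -$ admits a left adjoint (namely $\mathcal{C}(\mathfrak{l},-)$, with unit $\eta_{\mathfrak{b}}$ as in Equation~\eqref{thm_def_eta_b} and the adjunction bijection~\eqref{bijection1}) if and only if $\mathfrak{l}$ is finite-dimensional. The forward direction uses, exactly as in the proof of Theorem~\ref{thm_left_adjoint}, that products in $pre\mathcal{L}ie\text{-}\mathrm{Alg}$ are computed on underlying vector spaces, so that preservation of a countable product of copies of $\K$ forces $\dim_{\K}\mathfrak{l} < \infty$; the reverse direction is the explicit construction of $\mathcal{C}(\mathfrak{l},-)$. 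I would also correct the evident typo in the statement, replacing ``$\mathfrak{p}$ is finite-dimensional'' by ``$\mathfrak{l}$ is finite-dimensional''.

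Consequently there is essentially no obstacle: the only point requiring a word of justification is the bookkeeping identification of $pre\mathcal{L}ie\text{-}\mathrm{Alg}$ with $\mathrm{pre}\text{-}\mathrm{Lie}$ together with the matching of the two functors named $\mathfrak{l}\otimes-$, after which Theorem~\ref{thm_left_adjoint} applies directly. I would phrase the final proof as a single sentence: ``Taking $\mathcal{P} = pre\mathcal{L}ie$ in Theorem~\ref{thm_left_adjoint} yields the claim.''
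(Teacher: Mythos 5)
Your proposal is correct and matches the paper's (implicit) argument exactly: the paper states this corollary without proof precisely because it is the specialization of Theorem \ref{thm_left_adjoint} to the operad $pre\mathcal{L}ie$, just as you describe. Your identification of $pre\mathcal{L}ie$-algebras with pre-Lie algebras and your note on the typo ($\mathfrak{p}$ should read $\mathfrak{l}$) are both apt.
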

	
	The universal polynomials of a finite-dimensional pre-Lie algebra $\mathfrak{l}$ is of the same form as described in Equation \eqref{univ_poly_lbz}.
	And The universal algebra of $\mathfrak{l}$ is given by $\mathcal{C}(\mathfrak{l}):=\K[X_{si}: s,i=1,\ldots,n]/J$,where $J$ is the ideal generated by the universal polynomials of $\mathfrak{l}$ corresponding to $\mu \in \K[\Sm_2] = pre\mathcal{L}ie(2)$.

\bigskip
\subsubsection{\it The operad $\mathcal{P}erm$}~~\\

	\vspace{-0.4cm}
	The Koszul dual of the operad $pre\mathcal{L}ie$ is the operad $\mathcal{P}erm$. It is defined as $\mathcal{P}erm := \mathbb{T}(E_{\mathcal{P}erm})/(R_{\mathcal{P}erm})$ which encodes the category of Perm algebras. The $\Sm$-module $E_{\mathcal{P}erm}$ is defined as 
	$E_{\mathcal{P}erm} := (0,0,\K[\Sm_2],0,\ldots).$	
	The space of relations $R_{\mathcal{P}erm}$ is an $\Sm_3$-submodule of $\mathbb{T}(E_{\mathcal{P}erm})(3)$ generated by 
	$$\mu \circ_1 \mu -\mu \circ_2 \mu,~\mu \circ_1 \mu - (\mu \circ_2 \mu)^{(23)}, \text{ and } \mu \circ_2 \mu - (\mu \circ_1 \mu)^{(23)}.$$ 
	
	\begin{corollary}
		Let $\mathfrak{q}$ be a Perm algebra. Then the functor $\mathfrak{q} \otimes - : \mathrm{ComAlg} \to \mathrm{Perm}$ admits a left adjoint if and only if $\mathfrak{q}$ is finite-dimensional.
	\end{corollary}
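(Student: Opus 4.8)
The plan is to recognize this corollary as an immediate specialization of Theorem~\ref{thm_left_adjoint} to the operad $\p = \mathcal{P}erm$. First I would recall that $\mathcal{P}erm := \mathbb{T}(E_{\mathcal{P}erm})/(R_{\mathcal{P}erm})$ is an operad (it is a binary quadratic operad, presented by the quadratic data above), and that by definition a Perm algebra is precisely an algebra over this operad; hence the category $\mathrm{Perm}$ of Perm algebras is nothing but $\mathcal{P}erm$-$\mathrm{Alg}$.

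Next, I would invoke Proposition~\ref{funct_com_to_p-alg} to note that for any Perm algebra $\mathfrak{q}$ the assignment $\mathfrak{q} \otimes - : \mathrm{ComAlg} \to \mathcal{P}erm\text{-}\mathrm{Alg} = \mathrm{Perm}$ is a well-defined functor, so the statement makes sense. Then I would apply Theorem~\ref{thm_left_adjoint} verbatim with $\p = \mathcal{P}erm$ and $\fa = \mathfrak{q}$: the theorem asserts that the functor $\fa \otimes - : \mathrm{ComAlg} \to \p\text{-}\mathrm{Alg}$ admits a left adjoint if and only if $\fa$ is finite-dimensional. Substituting, the functor $\mathfrak{q} \otimes - : \mathrm{ComAlg} \to \mathrm{Perm}$ admits a left adjoint if and only if $\mathfrak{q}$ is finite-dimensional, which is exactly the claim.

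There is essentially no obstacle here: the content is entirely contained in the general Theorem~\ref{thm_left_adjoint}, and the only thing being used about $\mathcal{P}erm$ is that it \emph{is} a symmetric operad, which is clear from its quadratic presentation. If one wishes to make the corollary more concrete, one could additionally describe the left adjoint explicitly: by Definition~\ref{def_universal_algebra} it sends a finite-dimensional Perm algebra $\mathfrak{q}$ (with structure constants $\{\alpha^s_{i,j}\}$ coming from the single binary generator $\mu \in E_{\mathcal{P}erm}$) to the universal algebra $\mathcal{C}(\mathfrak{q}) = \K[X_{si} \mid s,i = 1,\ldots,n]/J$, where by Theorem~\ref{thm_2_gen_rest_all} the ideal $J$ is generated by the degree-two universal polynomials $P^{(\mathfrak{q})}_{(a,i_1,i_2)} = \sum_{u=1}^n \alpha^u_{i_1,i_2} X_{au} - \sum_{s_1,s_2=1}^n \alpha^a_{s_1,s_2} X_{s_1 i_1} X_{s_2 i_2}$. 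But for the corollary as stated, a one-line appeal to Theorem~\ref{thm_left_adjoint} suffices.
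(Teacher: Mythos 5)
Your proposal is correct and matches the paper's approach exactly: the corollary is obtained by specializing Theorem~\ref{thm_left_adjoint} to the operad $\mathcal{P}erm$, and your additional remark on the explicit form of $\mathcal{C}(\mathfrak{q})$ via Theorem~\ref{thm_2_gen_rest_all} mirrors the paper's follow-up description of the universal polynomials for Perm algebras.
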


	Let $\mathfrak{q}$ be a finite-dimensional Perm-algebra and $\mathcal{C}(\mathfrak{q})$ be its universal algebra.  Then the ideal $J$ is generated by the universal polynomials of $\mathfrak{q}$ corresponding to $\mu \in \K[\Sm_2] = \mathcal{P}erm(2)$, which is of the same form as described in Equation \eqref{univ_poly_lbz}.
	
\bigskip
\subsection{Algebras over $k$-ary quadratic operads}~~\\
	
	\vspace{-0.4cm}
	Here we focus on algebras over certain $k$-ary quadratic operads. More precisely, we describe the universal algebra and the universal polynomials for finite-dimensional totally associative $k$-ary algebras, partially associative $k$-ary algebras, Leibniz $k$-algebras, and Lie $k$-algebras. Before proceeding, we fix some notation. For any $k \in \mathbb{N}$, the basis elements of $\K[\Sm_k]$ are denoted as follows: $\mu$ denotes $\mathrm{id}_{\Sm_k}$ and $\mu^{\sigma}$ denotes the element $\sigma \in \Sm_k$. 
	
	\medskip
	Similar to binary quadratic operads, any $k$-ary quadratic operad $\mathcal{P}$ can be described in terms of quadratic data as follows: 
	$\mathcal{P}:= \T(E_\mathcal{P})\big/(R_\mathcal{P}),$
	where $$\overline{E}_\mathcal{P} := \big(0,0,\ldots,\overbrace{E_\mathcal{P}}^{k^{th}\text{-term}},0\big),$$
	is an $\Sm$-module concentrated in arity $k$.
	We noted in Remark \ref{remark_5.3} that Theorem \ref{thm_2_gen_rest_all}, initially proven for binary quadratic operads, naturally extends to $k$-ary quadratic operads. This gives us the following theorem.
	
	\begin{theorem}\label{thm_k-ary}
		Let $\mathcal{P} = \T(E_\p)/(R_\p)$ be a $k$-ary quadratic operad. Consider a finite-dimensional $\mathcal{P}$-algebra $\mathfrak{a}$ with its universal algebra $\mathcal{C}(\mathfrak{a})$. Then, the ideal $J$, as described in Equation \eqref{C(a)}, is generated by the universal polynomials corresponding to the basis elements of $\mathcal{P}(k) = E_\p$. 
	\end{theorem}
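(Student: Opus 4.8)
The plan is to run essentially the same induction as in the proof of Theorem~\ref{thm_2_gen_rest_all}, with the arity $2$ replaced by $k$ and with Lemma~\ref{lemma_5.1} replaced by its $k$-ary analogue, whose existence is asserted in Remark~\ref{remark_5.3}. Write $J_{0}$ for the ideal of $\K[X_{si}\mid s,i=1,\ldots,n]$ generated by the universal polynomials $P^{(\fa)}_{(\mu,a,i_{1},\ldots,i_{k})}$ with $\mu$ ranging over a $\K$-basis of $\mathcal{P}(k)=E_{\p}$ and $a,i_{1},\ldots,i_{k}=1,\ldots,n$; clearly $J_{0}\subseteq J$, so it suffices to prove $J\subseteq J_{0}$, i.e.\ that $P^{(\fa)}_{(\nu,a,i_{1},\ldots,i_{m})}\in J_{0}$ for every $m$ and every $\nu\in\mathcal{P}(m)$. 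Since $\nu\mapsto P^{(\fa)}_{(\nu,a,i_{1},\ldots,i_{m})}$ is $\K$-linear in $\nu$ (the structure constants of $\fa$ depend linearly on $\nu$ through $\gamma_{m}$), it is enough to check this for $\nu$ in a spanning set of $\mathcal{P}(m)$. As $\mathcal{P}$ is $k$-ary quadratic we have $\mathcal{P}(m)=0$ unless $m=(k-1)j+1$ (and for $m=1$ the only contribution is the unit, whose universal polynomial vanishes identically), so we may assume $m=(k-1)j+1$ with $j\ge 1$ and argue by induction on $j$. The base case $j=1$ is exactly the definition of $J_{0}$.

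For the inductive step, fix $j\ge 2$. Because $\mathcal{P}$ is a quotient of the free operad $\T(E_{\p})$, the space $\mathcal{P}(m)$ is spanned by images of $k$-ary tree monomials on $j$ copies of $E_{\p}$, and any such monomial with at least two vertices can be cut along an internal edge so as to exhibit it as a grafting $\mu\circ_{i}\rho$ with $\mu\in\mathcal{P}(p)$, $\rho\in\mathcal{P}(q)$, where $p,q\in\{(k-1)j'+1:j'\ge 1\}$ and, since $p+q=m+1$ and $k\ge 2$, both $p$ and $q$ are strictly smaller than $m$. Using the equivariance axiom of $\mathcal{P}$ one rewrites $\mu\circ_{i}\rho$ in the form $(\mu'\circ_{1}\rho)^{\tau}$, so it suffices to treat elements $(\mu\circ_{1}\rho)^{\sigma}$ with $\sigma\in\Sm_{m}$. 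By the induction hypothesis the polynomials $P^{(\fa)}_{(\mu,\,\cdot\,)}$ and $P^{(\fa)}_{(\rho,\,\cdot\,)}$ all lie in $J_{0}$; substituting the $k$-ary structure-constant identity
\[
\alpha^{s}_{\gamma_{m}(\mu\circ_{1}\rho),\,i_{\sigma(1)},\ldots,i_{\sigma(m)}}
=\sum_{t=1}^{n}\alpha^{t}_{\gamma_{q}(\rho),\,i_{\sigma(1)},\ldots,i_{\sigma(q)}}\;\alpha^{s}_{\gamma_{p}(\mu),\,t,i_{\sigma(q+1)},\ldots,i_{\sigma(m)}}
\]
(the analogue of \eqref{lem_freeOperad_struct_rel_gamma2_gamma3_gen} furnished by Remark~\ref{remark_5.3}) into the two summands of $P^{(\fa)}_{((\mu\circ_{1}\rho)^{\sigma},c,i_{1},\ldots,i_{m})}$ and expanding exactly as in the proof of Theorem~\ref{thm_2_gen_rest_all}, the two terms become equal modulo $J_{0}$. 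Hence $P^{(\fa)}_{((\mu\circ_{1}\rho)^{\sigma},c,i_{1},\ldots,i_{m})}\in J_{0}$, completing the induction and with it the proof that $J=J_{0}$.

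The only step that is not a direct transcription of earlier material is the combinatorial claim that every $k$-ary tree monomial in $\mathcal{P}(m)$ with more than one vertex decomposes, after applying equivariance, as a $\circ_{1}$-grafting of two generators of strictly smaller arity; I expect this to be the main point to verify carefully, although it is essentially immediate from the explicit description of $\T(E_{\p})$ as a space of $k$-ary trees. Once it is in hand, the algebraic content is precisely the computation already performed for Theorem~\ref{thm_2_gen_rest_all}, now carried out modulo $J_{0}$ rather than modulo $J$, together with the $k$-ary versions of Lemma~\ref{lemma_5.1} recorded in Remark~\ref{remark_5.3}; no further identities are required.
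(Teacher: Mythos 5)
Your proposal is correct and follows essentially the same route as the paper, which proves the theorem by simply invoking Remark \ref{remark_5.3} to transport the induction of Theorem \ref{thm_2_gen_rest_all} (decompose any higher-arity basis element as $(\mu\circ_1\rho)^\sigma$ with $\mu,\rho$ of smaller arity, apply the $k$-ary analogue of Lemma \ref{lemma_5.1}, and expand) to the $k$-ary setting. Your write-up merely makes explicit the details the paper leaves to the reader, and is in fact slightly more careful in working modulo the subideal $J_0$ rather than modulo $J$ itself.
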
 
	
	This extension enables us to describe the universal algebras for algebras over $k$-ary operads. Below, we present examples of such operads, which include several well-known $k$-ary algebras. 
	
\bigskip
\subsubsection{\it Totally associative $k$-ary algebras}~~\\

	\vspace{-0.4cm}
	A totally associative $k$-ary algebra is a vector space $A$ equipped with a linear map $\mu: A^{\otimes k} \to A$ such that for any $1 \le i,j \le k$ the following equality holds
	\begin{equation*}
		\mu \big(\mathrm{id}^{\otimes i-1} \otimes \mu \otimes \mathrm{id}^{\otimes k-i}\big) 
		= \mu \big(\mathrm{id}^{\otimes j-1} \otimes \mu \otimes \mathrm{id}^{\otimes k-j}\big).		
	\end{equation*}
	The operad $t\mathcal{A}ss := \T(E_{t\mathcal{A}ss})/ (R_{t\mathcal{A}ss})$ encodes the category of totally associative $k$-ary algebras, where the $\Sm$-module $E_{t\mathcal{A}ss}$ is given by 
	\begin{equation*}
		E_{t\mathcal{A}ss} := (0,\ldots,\overbrace{\K[\Sm_k]}^{k^{th}},0,\ldots).
	\end{equation*}
	The space of relations $R_{t\mathcal{A}ss}$ is the $\Sm_{2k-1}$-submodule of $\T(E_{t\mathcal{A}ss})(2k-1)$ generated by $\{\mu \circ_i \mu - \mu \circ_j \mu : \text{for all}~ i,j = 1,\ldots,k\}$. Now, as a corollary of Theorem \ref{thm_left_adjoint} we obtain the following result. 
	
	\begin{corollary}
		Let $A$ be a totally associative $k$-ary algebra. Then the functor $A \otimes - : \mathrm{ComAlg} \to \mathrm{tAss}$ admits a left adjoint if and only if $A$ is finite-dimensional.
	\end{corollary}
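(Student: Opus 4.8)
The plan is to obtain this statement as a direct instance of Theorem \ref{thm_left_adjoint}. First I would recall that, as explained in the discussion above, the operad $t\mathcal{A}ss = \T(E_{t\mathcal{A}ss})/(R_{t\mathcal{A}ss})$ is a genuine symmetric operad, being the quotient of a free operad by an operadic ideal, and that an algebra over $t\mathcal{A}ss$ is precisely a totally associative $k$-ary algebra; hence the category $\mathrm{tAss}$ is, by definition, the category $t\mathcal{A}ss$-$\mathrm{Alg}$. Under this identification, the functor $A \otimes - : \mathrm{ComAlg} \to \mathrm{tAss}$ in the statement is exactly the functor $A \otimes - : \mathrm{ComAlg} \to t\mathcal{A}ss$-$\mathrm{Alg}$ produced by Proposition \ref{funct_com_to_p-alg} applied to $\p = t\mathcal{A}ss$ and the $\p$-algebra $\fa = A$.

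Next I would simply invoke Theorem \ref{thm_left_adjoint} with $\p = t\mathcal{A}ss$ and $\fa = A$: it asserts that the functor $A \otimes - : \mathrm{ComAlg} \to t\mathcal{A}ss$-$\mathrm{Alg}$ admits a left adjoint if and only if $A$ is finite-dimensional. Translating back through the identification $\mathrm{tAss} = t\mathcal{A}ss$-$\mathrm{Alg}$ yields exactly the claimed equivalence. For the nontrivial direction one even gets, for free, an explicit description of the left adjoint: it sends a totally associative $k$-ary algebra $\mathfrak{b}$ to the commutative algebra $\mathcal{C}(A,\mathfrak{b})$ of Equation \eqref{C(a,b)}, and, specializing to $\mathfrak{b} = A$, one recovers the universal algebra $\mathcal{C}(A)$ of Definition \ref{def_universal_algebra}. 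Invoking Theorem \ref{thm_k-ary}, the defining ideal $J$ of $\mathcal{C}(A)$ is generated by the universal polynomials attached to the basis elements of $t\mathcal{A}ss(k) = \K[\Sm_k]$, so that $\mathcal{C}(A)$ is cut out by the finitely-indexed family $P^{(A)}_{(\mu^{\sigma},a,i_1,\ldots,i_k)}$ with $\sigma \in \Sm_k$ — the analogue of the descriptions recorded in the associative and Leibniz cases.

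There is essentially no obstacle here, since the substantive content is already carried out in the proof of Theorem \ref{thm_left_adjoint}: the construction of $\mathcal{C}(A,-)$, the verification of the adjunction via the bijection $\Omega_{\mathfrak{b},C}$ natural in both variables, and the converse argument via preservation of countable products in $\mathrm{ComAlg}$ all apply verbatim to $\p = t\mathcal{A}ss$. The only point requiring attention is the bookkeeping identification $\mathrm{tAss} = t\mathcal{A}ss$-$\mathrm{Alg}$ and the confirmation that the hypothesis of Theorem \ref{thm_left_adjoint}, namely that $t\mathcal{A}ss$ is an operad, is met; both were settled in the paragraph preceding the statement.
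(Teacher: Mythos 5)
Your proposal is correct and follows exactly the route the paper takes: the corollary is stated there as a direct instance of Theorem \ref{thm_left_adjoint} applied to the operad $t\mathcal{A}ss$, after identifying totally associative $k$-ary algebras with $t\mathcal{A}ss$-algebras. Your additional remarks on the explicit form of $\mathcal{C}(A)$ via Theorem \ref{thm_k-ary} likewise match the discussion immediately following the corollary in the paper.
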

	
	Let $A$ be a finite dimensional totally associative $k$-ary algebra. Also, let the set of scalars  $\{\alpha_{i_1,\ldots,i_k}^s~|~s,i_1,\ldots,i_k=1,\ldots,n\}$ denote the structure constants of $A$. Then, the universal polynomials of $A$ are given by 
	\begin{equation}\label{univ_poly_tAss}
		P^{(A)}_{(\mu,a,i_1,\ldots,i_k)} = \sum_{u=1}^n \alpha^u_{i_1,\ldots,i_n} X_{au}
		\hspace{0.2cm} - \sum_{s_1,\ldots,s_k=1}^n \alpha^a_{s_1,\ldots,s_k}X_{s_1 i_1} \cdots X_{s_k i_k}
	\end{equation} 
	for all $a,i_1,\ldots, i_k = 1,\ldots, n$. The universal algebra of $A$ is defined as $\mathcal{C}(A) := \K[X_{si} ~|~ s,i=1,\ldots,n]\big/J$, where $J$ is the ideal generated by the universal polynomials of $A$, as described in Equation \eqref{univ_poly_tAss}. 
	
\bigskip
\subsubsection{\it Partially associative $k$-ary algebras}~~\\

	\vspace{-0.4cm}
	A partially associative $k$-ary algebra is a vector space $A$ equipped with a linear map $\mu: A^{\otimes k} \to A$ such that the following equality holds
	\begin{equation*}
		\sum_{i=1}^{k} (-1)^{(i+1)(k-1)} \mu \big(\mathrm{id}^{\otimes i-1} \otimes \mu \otimes \mathrm{id}^{\otimes k-i}\big) 
		= 0.		
	\end{equation*}
	The operad $p\mathcal{A}ss := \T(E_{p\mathcal{A}ss})/(R_{p\mathcal{A}ss})$ encodes the category of partially associative $k$-ary algebras, where $\Sm$-module $E_{p\mathcal{A}ss}$ is defined as follows 
	\begin{equation*}
		E_{p\mathcal{A}ss} := (0,\ldots,\overbrace{\K[\Sm_k]}^{k^{th}},0,\ldots).
	\end{equation*}
	The space of relations $R_{p\mathcal{A}ss}$ is an $\Sm_{2k-1}$-submodule of $\T(E_{p\mathcal{A}ss})(2k-1)$ generated by $\sum_{i=1}^k (-1)^{(i+1)(k-1)} \mu \circ_i \mu$. Now, as a corollary of Theorem \ref{thm_left_adjoint} we obtain the following result. 
	
	\begin{corollary}
		Let $A$ be a partially associative $k$-ary algebra. Then the functor $A \otimes - : \mathrm{ComAlg} \to \mathrm{pAss}$ admits a left adjoint if and only if $A$ is finite-dimensional.
	\end{corollary}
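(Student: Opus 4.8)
The plan is to obtain the statement as an immediate specialization of Theorem \ref{thm_left_adjoint}. Recall that Theorem \ref{thm_left_adjoint} asserts, for \emph{any} operad $\p$ and any $\p$-algebra $\fa$, that the functor $\fa \otimes - : \mathrm{ComAlg} \to \p\text{-}\mathrm{Alg}$ admits a left adjoint if and only if $\fa$ is finite-dimensional. So the only work is to recognize the present situation as an instance of that general statement.

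First I would take the operad $\p$ to be $p\mathcal{A}ss = \T(E_{p\mathcal{A}ss})/(R_{p\mathcal{A}ss})$, the $k$-ary quadratic operad defined just above, whose $\Sm$-module is concentrated in arity $k$ with $E_{p\mathcal{A}ss} = \K[\Sm_k]$ in that arity and whose relations are generated by $\sum_{i=1}^{k}(-1)^{(i+1)(k-1)}\,\mu \circ_i \mu$. By construction, a $p\mathcal{A}ss$-algebra in the sense of an operad morphism $\gamma : p\mathcal{A}ss \to \mathrm{End}_A$ is precisely a vector space $A$ equipped with a linear map $\mu : A^{\otimes k} \to A$ satisfying the partial associativity identity displayed above; that is, the category $p\mathcal{A}ss\text{-}\mathrm{Alg}$ is the category $\mathrm{pAss}$ of partially associative $k$-ary algebras. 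Hence the functor $A \otimes - : \mathrm{ComAlg} \to \mathrm{pAss}$ of the corollary is exactly the functor $A \otimes - : \mathrm{ComAlg} \to p\mathcal{A}ss\text{-}\mathrm{Alg}$ of Proposition \ref{funct_com_to_p-alg}.

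Then I would simply invoke Theorem \ref{thm_left_adjoint} with this choice of $\p$: the functor $A \otimes -$ admits a left adjoint if and only if $A$ is finite-dimensional, which is the assertion. No further argument is needed, and there is no genuine obstacle here; the only point requiring a sentence of justification is the identification $p\mathcal{A}ss\text{-}\mathrm{Alg} \simeq \mathrm{pAss}$, which is immediate from the explicit quadratic presentation of $p\mathcal{A}ss$. (If one wishes, one may also record that the left adjoint, when $A$ is finite-dimensional, is the functor $\mathcal{C}(A,-)$ built in the proof of Theorem \ref{thm_left_adjoint}, with universal polynomials as spelled out in the paragraph following the corollary.)
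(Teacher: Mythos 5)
Your proposal is correct and matches the paper exactly: the paper presents this corollary as an immediate consequence of Theorem \ref{thm_left_adjoint} applied to the $k$-ary quadratic operad $p\mathcal{A}ss$, with the only content being the identification of $p\mathcal{A}ss$-algebras with partially associative $k$-ary algebras, which you make explicit. Nothing is missing.
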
 
	
	Let $A$ be a finite dimensional partially associative $k$-ary algebra. The universal polynomials of $A$ takes the same form as described in Equation \eqref{univ_poly_tAss}. 

\bigskip	
\subsubsection{\it Lie $k$-algebras}~~\\

	\vspace{-0.4cm}
	A Lie $k$-algebra is a vector space $L$ equipped with a linear map $\mu: L ^{\otimes k} \to L$ which satisfies the following equalities: 
	\begin{equation}
		\mu (\mu \otimes\mathrm{id}^{k-1})
		= \sum_{i=1}^{k} \mu \big(\mathrm{id}^{\otimes i-1} \otimes \mu \otimes \mathrm{id}^{\otimes k-i}\big)^{\sigma_i}
		\quad \text{and} \quad 
		\mu^\sigma = \mathrm{sgn}(\sigma) \mu \text{ for all } \sigma \in \Sm_k, 
	\end{equation}	
	where $\sigma_i \in \Sm_{2k-1}$ is given by 
	$\sigma_i := \left[\underbrace{1,\ldots,i-1},\overbrace{i,k+1,k+2,\ldots,k+k-1},\underbrace{i+1,\ldots,k}\right].$
	The permutation $\sigma_i$ is described using the notation given by J. L. Loday and B. Vallete in \cite[Introduction, pg xxiv]{Loday-Vallette}. The operad $k$-$\mathcal{L}ie := \T(E_{k\text{-}\mathcal{L}ie})/(R_{k\text{-}\mathcal{L}ie})$ is described in terms of the quadratic data $(E_{k\text{-}\mathcal{L}ie},R_{k\text{-}\mathcal{L}ie})$ where the $\Sm$-module $E_{k\text{-}\mathcal{L}ie}$ is given by 
	\begin{equation}
		E_{k\text{-}\mathcal{L}ie} := (0,\ldots,\overbrace{\K[\Sm_k]}^{k^{th}},0,\ldots)
	\end{equation}
 	and the space of relations $R_{k\text{-}\mathcal{L}ie}$ is an $\Sm_{2k-1}$-submodule of $\T(E_{k\text{-}\mathcal{L}ie})(2k-1)$ generated by 
 	\begin{equation*}
 		\mu (\mu \otimes\mathrm{id}^{k-1})
 		- \sum_{i=1}^{k} \mu \big(\mathrm{id}^{\otimes i-1} \otimes \mu \otimes \mathrm{id}^{\otimes k-i}\big)^{\sigma_i}
 		\quad \text{and} \quad 
 		\mu^\sigma - \mathrm{sgn}(\sigma)\mu = 0 \text{ for any } \sigma \in \Sm_k.
 	\end{equation*}	 
 	Again, as a corollary of Theorem \ref{thm_left_adjoint} we obtain the following result. 
 	
 	\begin{corollary}
 		Let $L$ be a Lie $k$-algebra. Then the functor $L \otimes - : \mathrm{ComAlg} \to k\text{-}\mathrm{Lie}$ admits a left adjoint if and only if $L$ is finite-dimensional.
 	\end{corollary}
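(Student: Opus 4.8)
The plan is to obtain this statement as an immediate specialization of Theorem \ref{thm_left_adjoint}. The quadratic presentation recalled just above identifies the category $k\text{-}\mathrm{Lie}$ of Lie $k$-algebras with the category $(k\text{-}\mathcal{L}ie)\text{-}\mathrm{Alg}$ of algebras over the $k$-ary quadratic operad $k\text{-}\mathcal{L}ie = \T(E_{k\text{-}\mathcal{L}ie})/(R_{k\text{-}\mathcal{L}ie})$: the generating element $\mu \in k\text{-}\mathcal{L}ie(k)$ corresponds to the $k$-ary bracket, the relation $\mu^\sigma - \mathrm{sgn}(\sigma)\mu$ encodes skewsymmetry, and the remaining generator of $R_{k\text{-}\mathcal{L}ie}$ encodes the generalized Jacobi identity. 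Under this identification the functor $L \otimes - : \mathrm{ComAlg} \to k\text{-}\mathrm{Lie}$ is precisely the functor $L \otimes - : \mathrm{ComAlg} \to (k\text{-}\mathcal{L}ie)\text{-}\mathrm{Alg}$ furnished by Proposition \ref{funct_com_to_p-alg}.

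With this dictionary in hand, I would simply apply Theorem \ref{thm_left_adjoint} with $\p := k\text{-}\mathcal{L}ie$ and $\fa := L$, which gives both implications at once. For the ``if'' direction the left adjoint is the functor $\mathcal{C}(L,-): (k\text{-}\mathcal{L}ie)\text{-}\mathrm{Alg} \to \mathrm{ComAlg}$ constructed in the proof of that theorem; for the ``only if'' direction one uses that a left adjoint preserves products, that products in $(k\text{-}\mathcal{L}ie)\text{-}\mathrm{Alg}$ are computed on the underlying vector spaces, and that preservation of the product of countably many copies of $\K$ forces $\dim L < \infty$. There is no real obstacle beyond the identification $k\text{-}\mathrm{Lie} \simeq (k\text{-}\mathcal{L}ie)\text{-}\mathrm{Alg}$, which is exactly the content of the quadratic presentation above; once it is recorded, the corollary is a verbatim instance of Theorem \ref{thm_left_adjoint}.

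Finally, paralleling the Lie and Leibniz cases, I would note that the skewsymmetry relation imposes $\alpha^s_{i_{\tau(1)},\ldots,i_{\tau(k)}} = \mathrm{sgn}(\tau)\,\alpha^s_{i_1,\ldots,i_k}$ on the structure constants of a finite-dimensional $L$, and that by Theorem \ref{thm_k-ary} the universal algebra of $L$ is $\mathcal{C}(L) = \K[X_{si}\mid s,i=1,\ldots,n]/J$, where $n = \dim L$, $\{\alpha^s_{i_1,\ldots,i_k}\}$ denote the structure constants of $L$, and $J$ is the ideal generated by the degree-$k$ universal polynomials
\begin{equation*}
    P^{(L)}_{(a,i_1,\ldots,i_k)} = \sum_{u=1}^n \alpha^u_{i_1,\ldots,i_k} X_{au} - \sum_{s_1,\ldots,s_k=1}^n \alpha^a_{s_1,\ldots,s_k} X_{s_1 i_1}\cdots X_{s_k i_k},
\end{equation*}
for $a,i_1,\ldots,i_k = 1,\ldots,n$.
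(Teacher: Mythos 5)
Your proposal is correct and follows exactly the route the paper takes: the corollary is obtained by specializing Theorem \ref{thm_left_adjoint} to the $k$-ary quadratic operad $k\text{-}\mathcal{L}ie$, whose algebras are precisely Lie $k$-algebras. Your concluding remarks on the structure constants and the universal polynomials via Theorem \ref{thm_k-ary} likewise match the paper's subsequent description of $\mathcal{C}(L)$.
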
 
	
 	Let $L$ be a finite dimensional Lie $k$-algebra and $\mathcal{C}(L)$ be its universal algebra. Then, the ideal $J$ is generated by the universal polynomials of $L$ associated with $\mu \in \K[\Sm_k] = k\text{-}\mathcal{L}ie(k)$:
 	\begin{equation}\label{univ_poly_Lie_k}
 		P^{(L)}_{(\mu,a,i_1,\ldots,i_k)} = \sum_{u=1}^n \alpha^u_{i_1,\ldots,i_n} X_{au}
 		\hspace{0.2cm} - \sum_{s_1,\ldots,s_k=1}^n \alpha^a_{s_1,\ldots,s_k}X_{s_1 i_1} \cdots X_{s_k i_k},
 	\end{equation} 
 	for all $a,i_1,\ldots, i_k = 1,\ldots, n$.

\bigskip	
\subsubsection{\it Leibniz $k$-algebras}~~\\

	\vspace{-0.4cm}
	A Leibniz $k$-algebra is a vector space $L$ equipped with a linear map $\mu: L^{\otimes k} \to L$ for which the following equality holds
	\begin{equation}
		\mu (\mu \otimes\mathrm{id}^{k-1})
		= \sum_{i=1}^{k} \mu \big(\mathrm{id}^{\otimes i-1} \otimes \mu \otimes \mathrm{id}^{\otimes k-i}\big)^{\sigma_i},	
	\end{equation}
	where $\sigma_i \in \Sm_{2k-1}$ is the same as defined in the case of Lie $k$-algebras.
	The operad $k\text{-}\mathcal{L}eib := \T(E_{k\text{-}\mathcal{L}eib})/(R_{k\text{-}\mathcal{L}eib})$ encodes the category of Leibniz $k$-algebras, The $\Sm$-module $E_{k\text{-}\mathcal{L}eib}$ is defined as 
	\begin{equation}
		E_{k\text{-}\mathcal{L}eib} := (0,\ldots,\overbrace{\K[\Sm_k]}^{k^{th}},0,\ldots).
	\end{equation}
	The space of relations $R_{k\text{-}\mathcal{L}eib}$ is an $\Sm_{2k-1}$-submodule of $\T(E_{k\text{-}\mathcal{L}eib})(2k-1)$ generated by 
	\begin{equation}
		\mu (\mu \otimes\mathrm{id}^{k-1})
		- \sum_{i=1}^{k} \mu \big(\mathrm{id}^{\otimes i-1} \otimes \mu \otimes \mathrm{id}^{\otimes k-i}\big)^{\sigma_i}.
	\end{equation}	 
	Again, as a corollary of Theorem \ref{thm_left_adjoint} we obtain the following result. 
	\begin{corollary}
		Let $L$ be a Leibniz $k$-algebra. Then the functor $L \otimes - : \mathrm{ComAlg} \to k\text{-}\mathrm{Leib}$ admits a left adjoint if and only if $L$ is finite-dimensional.
	\end{corollary}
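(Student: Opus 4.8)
The plan is to deduce this directly from Theorem \ref{thm_left_adjoint} by specializing the operad, so essentially no new work is required. First I would recall that, by the quadratic presentation given just above, the category $k\text{-}\mathrm{Leib}$ of Leibniz $k$-algebras coincides with the category $(k\text{-}\mathcal{L}eib)$-Alg of algebras over the operad $k\text{-}\mathcal{L}eib = \T(E_{k\text{-}\mathcal{L}eib})/(R_{k\text{-}\mathcal{L}eib})$: a $(k\text{-}\mathcal{L}eib)$-algebra structure on a vector space $L$ is a morphism of operads $\gamma : k\text{-}\mathcal{L}eib \to \mathrm{End}_L$, which, since $E_{k\text{-}\mathcal{L}eib}$ is concentrated in arity $k$ and equals $\K[\Sm_k]$, amounts to a single linear map $\mu : L^{\otimes k} \to L$ subject to the relation generating $R_{k\text{-}\mathcal{L}eib}$, i.e. exactly the defining identity of a Leibniz $k$-algebra. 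Under this identification the functor $L \otimes - : \mathrm{ComAlg} \to k\text{-}\mathrm{Leib}$ is precisely the functor $L \otimes - : \mathrm{ComAlg} \to (k\text{-}\mathcal{L}eib)$-Alg produced by Proposition \ref{funct_com_to_p-alg}.

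With this identification in place I would simply invoke Theorem \ref{thm_left_adjoint} with $\p = k\text{-}\mathcal{L}eib$, which yields both implications at once. For the ``only if'' direction one uses that a left adjoint preserves all limits, that products in $(k\text{-}\mathcal{L}eib)$-Alg are computed on underlying vector spaces (Equations \eqref{arb_product_P_alg} and \eqref{arb_product_P_alg_structure}), and that $L \otimes -$ cannot preserve the product of countably many copies of $\K$ unless $L$ is finite-dimensional. For the ``if'' direction the left adjoint is the functor $\mathcal{C}(L,-) : (k\text{-}\mathcal{L}eib)\text{-Alg} \to \mathrm{ComAlg}$ built explicitly in the proof of Theorem \ref{thm_left_adjoint} from the universal polynomials, together with the unit $\eta$ defined as in Equation \eqref{thm_def_eta_b}.

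I do not anticipate any genuine obstacle: the only point to verify is the (routine) matching of the category of Leibniz $k$-algebras with $(k\text{-}\mathcal{L}eib)$-Alg, which is immediate from the presentation above, after which the corollary is a direct application of Theorem \ref{thm_left_adjoint}. If desired, one may also record via Theorem \ref{thm_k-ary} that, when $L$ is finite-dimensional, the defining ideal $J$ of $\mathcal{C}(L)$ is generated by the universal polynomials attached to the basis elements of $k\text{-}\mathcal{L}eib(k) = \K[\Sm_k]$, i.e. polynomials of the form \eqref{univ_poly_Lie_k}, but this refinement is not needed for the statement itself.
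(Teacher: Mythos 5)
Your proposal is correct and matches the paper's argument exactly: the paper also obtains this corollary by identifying Leibniz $k$-algebras with algebras over the quadratic operad $k\text{-}\mathcal{L}eib$ and then specializing Theorem \ref{thm_left_adjoint}. The optional remark about Theorem \ref{thm_k-ary} and the universal polynomials likewise mirrors the paper's subsequent discussion.
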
 
	
	For any finite dimensional Leibniz $k$-algebra $L$. The universal polynomials of $L$ is of the same form, as described in Equation \eqref{univ_poly_Lie_k}. 
		
\bigskip
\subsection{Algebras over non-symmetric operads}~~\\

	\vspace{-0.4cm}
	Any given non-symmetric operad $\mathcal{P} = (\mathcal{P}(0),\mathcal{P}(1),\ldots)$, one can construct a symmetric operad $\widetilde{\mathcal{P}}$ given as follows:
	\begin{equation*}
		\widetilde{\mathcal{P}}(n) := \mathcal{P}(n) \otimes \K[\Sm_n],
	\end{equation*}
	where the action of the symmetric group $\Sm_n$ on $\widetilde{\mathcal{P}}(n)$ is given by the regular representation $\K[\Sm_n]$. The association $\mathcal{P} \mapsto \widetilde{\mathcal{P}}$ is refer to as ``symmetrization". Furthermore, we note that the category of algebras over the non-symmetric operad $\mathcal{P}$ and over its associated symmetric operad $\widetilde{\mathcal{P}}$ are the same. See \cite[Section 5.9.11]{Loday-Vallette} for further details. Thus, using symmetrization our universal constructions also holds for algebras over non-symmetric operads. 
	
			
	\medskip	
	We refer the readers to \cite{Apurba-Da} for examples of non-symmetric operads that encode various Loday-type algebras such as dialgebras, associative trialgebras, dendriform algebras, dendriform trialgebras, quadri algebras, and ennea algebras. Consequently, using symmetrization to obtain the associated symmetric operads for the operads encoding Loday-type algebras, we can construct the universal algebras for these Loday-type algebras using the results obtained in Section \ref{sec-3}.

\bigskip
\subsection{Algebras over graded operads}~~\\
	
	\vspace{-0.4cm}
	Here, we consider examples of graded algebras and operads encoding them. Precisely, we give the construction of universal coacting algebras for graded Lie/Leibniz algebras, graded Poisson algebras, Garstenhaber algebras, and  Batalin-Vilkovisky algebras. 

\bigskip
\subsubsection{\it Graded Leibniz algebras}~~\\
	
	\vspace{-0.4cm}
	A graded Leibniz algebra is a graded vector space $\mathfrak{H} = \oplus_{p \ge 0}\mathfrak{H_p}$ equipped with a degree $0$ binary bracket ``$[\cdot, \cdot]$",  called the graded Leibniz bracket. It satisfies the following identity
	\[
	[x, [y, z]] = [[x, y], z] + (-1)^{|x||y|} [y, [x, z]],
	\]
	where $|x|$ and $|y|$ denotes the degree of the homogeneous elements $x$ and $y$, respectively. This identity is known as the graded Leibniz identity.
	
	\medskip
	The operad $\mathcal{L}eib$ encodes the category of graded Leibniz algebras. Note that we can consider the operad $\mathcal{L}eib$ as a graded operad concentrated at degree zero in every arity. Then, a graded Leibniz algebra structure on a graded vector space $\mathfrak{H} = \oplus_{p \ge 0}\mathfrak{H_p}$ is given by a graded operad morphism $\gamma : \mathcal{L}eib \rightarrow \mathrm{End}_{\mathfrak{H}}$.  
	
	\medskip 
 	The following corollary to Theorem \ref{thm_graded_left_adjoint} naturally extends the universal construction of A. L. Agore and G. Militaru \cite[Theorem 2.1]{Ag-Mil} to the graded context. 
	
	\begin{corollary}\label{cor_graded_Leib}
		Let $\mathfrak{H} = \oplus_{p \ge 0} \mathfrak{H}_p$ be a graded Leibniz algebra over $\K$. Then the functor $\mathfrak{H} \otimes - : \mathrm{g}\text{-}\mathrm{ComAlg} \to \mathrm{g}\text{-}\mathrm{Leib}$ admits a left adjoint if and only if each component $\mathfrak{H}_p$ of $\mathfrak{H}$ is finite-dimensional. 
	\end{corollary}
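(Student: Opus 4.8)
The plan is to deduce the statement directly from Theorem \ref{thm_graded_left_adjoint} by exhibiting $\mathrm{g}\text{-}\mathrm{Leib}$ as the category of algebras over a suitable graded operad. First I would regard $\mathcal{L}eib$ as a graded operad $\p$ whose component $\p(n)$ is concentrated in cohomological degree $0$ for every arity $n$, i.e. $\p(n) = \mathcal{L}eib(n)$ placed in degree $0$ with its usual $\Sm_n$-action. Then I would check that a $\p$-algebra structure in the sense of Definition \ref{algebra_graded_def} on a graded vector space $\mathfrak{H} = \oplus_{p\ge 0}\mathfrak{H}_p$ — that is, a morphism of graded operads $\gamma\colon \mathcal{L}eib \to \mathrm{End}_{\mathfrak{H}}$ — is precisely the datum of a degree-zero bilinear bracket $[\cdot,\cdot]$ satisfying the graded Leibniz identity, and that morphisms of such $\p$-algebras are exactly degree-zero linear maps commuting with the brackets. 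This yields an identification $\mathcal{L}eib\text{-}\mathrm{Alg} = \mathrm{g}\text{-}\mathrm{Leib}$ as categories.

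The key point in this identification is the bookkeeping of Koszul signs. Since $\mathcal{L}eib = \T(E_{\mathcal{L}eib})/(R_{\mathcal{L}eib})$ with $E_{\mathcal{L}eib}$ concentrated in arity $2$, a graded operad morphism $\gamma$ is determined by the image $b := \gamma_2(\mu) \in \mathrm{End}_{\mathfrak{H}}(2) = \mathrm{Hom}_{\mathrm{g}}(\mathfrak{H}^{\otimes 2},\mathfrak{H})$ of the generator, subject to the relation generated by $\mu \circ_1 \mu - \mu \circ_2 \mu - (\mu \circ_1 \mu)^{(23)}$. Unwinding the partial-composition and $\Sm_3$-action formulas \eqref{def_partial_comp_end_operad} and \eqref{def_end_opd_sym_action} in the graded endomorphism operad $\mathrm{End}_{\mathfrak{H}}$ — where the $\Sm_n$-action is taken with the Koszul sign convention on homogeneous tensors — the relation $\gamma_3(\mu \circ_1 \mu) = \gamma_3(\mu \circ_2 \mu) + \gamma_3\big((\mu \circ_1 \mu)^{(23)}\big)$, evaluated on homogeneous $x,y,z$, reads $[[x,y],z] = [x,[y,z]] - (-1)^{|x||y|}[y,[x,z]]$, which is the graded Leibniz identity after rearranging. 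Under this dictionary the functor $\mathfrak{H}\otimes -\colon \mathrm{g}\text{-}\mathrm{ComAlg} \to \mathrm{g}\text{-}\mathrm{Leib}$ coincides with the functor appearing in Theorem \ref{thm_graded_left_adjoint}: the twisting sign in \eqref{graded_case_gamma_bar} degenerates because $\p$ is concentrated in degree $0$, so the transported bracket on $\mathfrak{H}\otimes C$ is the expected graded one $[x\otimes c,\, y\otimes d] = (-1)^{|c||y|}[x,y]\otimes cd$.

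With this in place the statement is immediate: applying Theorem \ref{thm_graded_left_adjoint} to $\p = \mathcal{L}eib$ shows that $\mathfrak{H}\otimes -$ admits a left adjoint if and only if $\mathfrak{H}_p$ is finite-dimensional for every $p\ge 0$, the left adjoint being the functor $\mathcal{C}(\mathfrak{H},-)$ built from the universal graded algebra $\mathcal{C}(\mathfrak{H})$ of \eqref{C(A)}, whose defining relations \eqref{graded universal poly} involve here only the structure constants of the single binary bracket by Remark \ref{remark_5.3}. I do not expect a serious obstacle; the only genuinely delicate step is the finite, mechanical Koszul-sign computation identifying the operadic relation in $\mathrm{End}_{\mathfrak{H}}$ with the classical graded Leibniz identity, together with the analogous check that \eqref{graded_case_gamma_bar} reproduces the usual graded bracket on $\mathfrak{H}\otimes C$.
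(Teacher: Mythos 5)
Your proposal is correct, and it reaches the conclusion by a more economical route than the paper does. You treat the corollary as a formal consequence of Theorem \ref{thm_graded_left_adjoint}: identify $\mathrm{g}\text{-}\mathrm{Leib}$ with the category of algebras over the graded operad $\mathcal{L}eib$ concentrated in cohomological degree $0$, check that the functor $\mathfrak{H}\otimes -$ of the corollary is the one of the theorem, and quote the theorem. The paper in fact sets up exactly this identification in the paragraph preceding the corollary, but its proof does not then simply invoke the theorem; instead it re-runs the construction of Section \ref{sec-3} explicitly for the Leibniz bracket, writing down the generators $X^{(\pi)}_{si}$, the universal graded polynomials $P^{(\mathfrak{H},\mathfrak{G},\omega)}_{(r,pi,qj)}$ with their Koszul sign $(-1)^{\tau(p-\varepsilon)}$, the morphism $\eta_{\mathfrak{G}}$, and the adjunction bijection $\Psi_{\mathfrak{G},A}$, deferring only the converse direction to the argument of Theorem \ref{thm_graded_left_adjoint}. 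What your route buys is brevity and a clean separation of concerns; what the paper's route buys is the explicit presentation of $\mathcal{C}(\mathfrak{H},\mathfrak{G})$ by generators and relations, which is needed immediately afterwards to define the universal graded polynomials of $\mathfrak{H}$ and to compare with the Agore--Militaru construction, so your proof would have to be supplemented by that computation if the surrounding material is to stand. Two small points of care: the sign $\varrho$ in \eqref{graded_case_gamma_bar} does not literally ``degenerate'' when $\p$ sits in degree $0$ --- it is the Koszul transposition sign coming from the degrees of the algebra and coalgebra factors, and for $k=2$ it is precisely the $(-1)^{|c||y|}$ you write, so your final formula is right even if the phrasing is loose; and the identity you extract from $\mu\circ_1\mu-\mu\circ_2\mu-(\mu\circ_1\mu)^{(23)}$ should come out in the right-Leibniz form $[[x,y],z]=[x,[y,z]]+(-1)^{|y||z|}[[x,z],y]$ rather than the left-Leibniz form stated in the paper's Subsection 5.4.1 --- this is a left/right convention mismatch already present in the paper itself and does not affect the validity of the adjunction statement.
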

	
	\begin{proof}
		($\implies$)
		Since each component $\mathfrak{H}_p$ of $\mathfrak{H}=\oplus_{p \ge 0} \mathfrak{H}_p$ is finite-dimensional, let   
		\begin{equation*}
			\{a_{01},\ldots,a_{0d_0};a_{11},\ldots,a_{1d_1}; \ldots;a_{p1},\ldots,a_{pd_p};\ldots\}
		\end{equation*}
		be a vector space basis of $\mathfrak{H}$, where $\{a_{p1},\ldots,a_{pd_p}\}$ denotes a vector space basis of the $p$-degree component $\mathfrak{H}_p$. The structure constants of $\mathfrak{H}$ are obtained as follows: for any $p,q \ge 0$
		\begin{equation}\label{graded_case_struct_const_h}
			[a_{pi},a_{qj}] = \sum_{s=1}^{d_{p+q}} \alpha_{pi,qj}^{\theta,s}~a_{(p+q)s} 
		\end{equation} 
		where $\theta = p+q$, $1 \le i \le d_p$, and $1 \le j \le d_q$. Consequently, the set of scalars given below yields the structure constants of $\mathfrak{H}$:
		$$\bigcup_{p,q \ge 0}\{\alpha_{pi,qj}^{\theta,s}~|~ \theta = p+1;~ 1 \le i \le d_p;~ 1 \le j \le d_q;~ 1 \le s \le d_{\theta}\}.$$ 
		Now, for any object $\mathfrak{G} \in \mathrm{g}\text{-}\mathrm{Leib}$ we shall construct a graded commutative algebra $\mathcal{C}(\mathfrak{H},\mathfrak{G})$. 
		Let $\{b_{pi}: p \ge 0, i \in I_p\}$ be a basis of $\mathfrak{G}$. Then, for any $p,q \ge 0$, $i \in I_p$, and $j \in I_q$ there exists a finite set $B_{pi,qj} \subset I_{p+q}$ for which
		\begin{equation}\label{graded_case_struc_const_g}
			[b_{pi},b_{qj}] ~= \sum_{u \in B_{pi,qj}} \beta_{pi,qj}^{\theta,u} ~b_{(p+q)u}, 
		\end{equation}
		where $\theta = p+q$.
		Let $\K\big[X_{si}^{(\pi)}: s,i,\pi \ge 0\big]$ be the usual graded commutative polynomial algebra with indeterminants $X^{(\pi)}_{si}$ of cohomological degree $\pi$. We define
		\begin{equation}
			\mathcal{C}(\mathfrak{H},\mathfrak{G}) := \K\big[X_{si}^{(\pi)}: s,i,\pi \ge 0\big] \big/ J
		\end{equation} 
		where $J$ is the ideal generated by the polynomials of the following form: for any $p,q \ge 0$
		\begin{equation}
			P_{(r,pi,qj)}^{(\mathfrak{H},\mathfrak{G},\omega)} 
			:= 
			\sum_{u \in B_{pi,qj}} \beta_{pi,qj}^{\theta,u} X_{ru}^{(\omega)} 
			- 
			\sum_{\substack{\varepsilon + \tau = \theta-\omega,\\ 0 \le \varepsilon \le p, \\ 
					0 \le \tau \le q.}}
			\sum_{s=1}^{d_\varepsilon} 
			\sum_{t=1}^{d_\tau} 
			(-1)^{\tau (p-\varepsilon)}
			\alpha_{\varepsilon s,\tau t}^{\theta-\omega,r} X_{si}^{(p-\varepsilon)} X_{tj}^{(q-\tau)}, 
		\end{equation} 
		where $\theta = p+q$, $0 \le \omega \le \theta$, $i \in I_p$, $j \in I_q$, and $r= 1, \ldots, d_{\theta}$.
		Let $x_{si}^{(\pi)} := \widehat{X_{si}^{(\pi)}}$ denote the class of $X_{si}^{(\pi)}$ in the algebra $\mathcal{C}(\mathfrak{H},\mathfrak{G})$. 
		One can show that the construction of $\mathcal{C}(\mathfrak{H},\mathfrak{G})$ defines a functor $\mathcal{C}(\mathfrak{G},-): \mathrm{g}\text{-}\mathrm{Leib} \to \mathrm{g}\text{-}\mathrm{ComAlg}$.
		
		\medskip
		Next, we define a map $\eta_\mathfrak{G}: \mathfrak{G} \to \mathfrak{H} \otimes \mathcal{C}(\mathfrak{H},\mathfrak{G})$ as follows: for any $p \ge 0$ and $i \in I_p$ 
		\begin{equation}\label{graded_case_eta_g}
			\eta_\mathfrak{G}(b_{pi}) := \sum_{\varepsilon = 0}^{p}\sum_{s=1}^{d_\varepsilon} a_{\varepsilon s} \otimes x_{si}^{(p-\varepsilon)} 
		\end{equation}
		Following the proof of Lemma \ref{lemma_graded_eta_A} one can verify that $\eta_\mathfrak{G}$ is a graded Leibniz algebra morphism. 
		Additionally, we have the following bijection: 
		\begin{equation}
			\Psi_{\mathfrak{G},A}: \mathrm{Hom}_{\mathrm{g}\text{-}\mathrm{ComAlg}}\big(\mathcal{C}(\mathfrak{H},\mathfrak{G}),A
			\big) 
			\longrightarrow \mathrm{Hom}_{\mathrm{g}\text{-}\mathrm{Leib}}(\mathfrak{G},\mathfrak{H} \otimes A), 
		\end{equation}
		defined by $\Psi_{\mathfrak{G},A}(\theta) := (\mathrm{id}_\mathfrak{H} \otimes \theta) 
		\circ \eta_\mathfrak{G}$. 
		Consequently, the functor $\mathcal{C}(\mathfrak{G},-): \mathrm{g}\text{-}\mathrm{Leib} \to \mathrm{g}\text{-}\mathrm{ComAlg}$ is left adjoint to the functor $\mathfrak{H} \otimes -:\mathrm{g}\text{-}\mathrm{ComAlg} \to \mathrm{g}\text{-}\mathrm{Leib}$. 
		
		\medskip\noindent
		$(\impliedby)$ The converse part of the proof can be done in a similarly way, by following the arguments provided in Theorem \ref{thm_graded_left_adjoint}.
	\end{proof}

	\begin{definition}
		Let $\mathfrak{H}= \oplus_{p \ge 0} \mathfrak{H}_p$ be a graded Leibniz algebra with each component $\mathfrak{H}_p$ being finite-dimensional. Also, let 
		$\{a_{01},\ldots,a_{0d_0};a_{11},\ldots,a_{1d_1};\ldots;a_{p1},\ldots,a_{pd_p};\ldots\}$ denote a vector space basis of $\mathfrak{H}$, such that the set $\{a_{p1},\ldots,a_{pd_p}\}$ denotes a basis of the $p$-degree component $\mathfrak{H}_p$. The set of scalars, given by   
		\begin{equation}\label{graded_leib_struc_const}
			\bigcup_{p,q \ge 0}\{\alpha_{pi,qj}^{\theta,s}~|~\theta = p+q;~ 1 \le i \le d_p;~ 1 \le j \le d_q;~ 1 \le s \le d_{\theta}\},
		\end{equation}
		denotes the structure constants of $\mathfrak{H}$. 
		Then, the \textbf{universal graded algebra} of $\mathfrak{H}$ is the graded commutative algebra   $\mathcal{C}(\mathfrak{H},\mathfrak{H}):= \K[X^{(\pi)}_{ij} | i,j,\pi \ge 0] \big/ J$. 
		Note that following Remark \ref{remark_5.3} we obtain the generators of the ideal $J$ which are described below. For any $p,q \ge 0$  
		\begin{equation}\label{univ_poly_graded_Leib}
			P_{(r,pi,qj)}^{(\mathfrak{H},\omega)} 
			:= 
			\sum_{u = 1}^{d_{\theta}} \alpha_{pi,qj}^{\theta,u} X_{ru}^{\omega} 
			~- 
			\sum_{\substack{\varepsilon + \tau = \theta-\omega ,\\ 0 \le \varepsilon \le p, \\ 
					0 \le \tau \le q.}}
			\sum_{s=1}^{d_\varepsilon} 
			\sum_{t=1}^{d_\tau} 
			(-1)^{\tau(p-\varepsilon)}\alpha_{\varepsilon s,\tau 
			t}^{\theta-\omega,r} X_{si}^{(p-\varepsilon)} X_{tj}^{(q-\tau)},
		\end{equation}
		where $\theta=p+q,$ $0 \le \omega \le \theta$, $i \in I_p$, $j \in I_q$, and $r=1,\ldots,d_{\theta}$. These polynomials are called the \textbf{universal graded polynomials of $\mathfrak{H}$} of cohomological degree $\omega$. 
	\end{definition}	

\medskip
\subsubsection{\it Graded Lie algebras}~~\\

	\vspace{-0.4cm}
	A graded Lie algebra $\mathfrak{H}=\oplus_{p \ge 0} \mathfrak{H}_p$ is a graded Leibniz algebra, which along with the graded Leibniz identity, satisfying the graded anti-symmetry relation:
	\begin{equation*}\label{graded_Lie_anti_sym}
		[x,y] = - (-1)^{|x||y|} [y,x] \quad \text{for all homogeneous elements } x,y \in \mathfrak{H}.
	\end{equation*} 
	Since, any graded Lie algebra is also a graded Leibniz algebra, Corollary \ref{cor_graded_Leib} also holds for graded Lie algebras and the universal graded polynomials of a graded Lie algebra are the same as the polynomials described in Equation \eqref{univ_poly_graded_Leib}. However, here the universal graded polynomials takes a rather simplified form, 
	because the structure constants as described in \eqref{graded_leib_struc_const} satisfying the following relations:   
	\begin{equation*}
		\alpha_{pi,pi}^{\theta,s} = 0 
		\quad \text{and} \quad 
		\alpha_{pi,qj}^{\theta,s} 
		= 
		-(-1)^{pq} \alpha_{qj,pi}^{\theta,s}. 
	\end{equation*} 
	 
\bigskip
\subsubsection{\it Graded Poisson algebras}~~\\
	
	\vspace{-0.4cm}
	A graded Poisson algebra is a graded vector space $\mathfrak{P}$ equipped with a graded Lie bracket $``\{\cdot,\cdot\}"$ satisfying the following graded Leibniz rule: 
	\begin{equation*}
		\{x,y \cdot z\} = \{x,y\} \cdot z  + (-1)^{|x||y|}y \cdot \{x,z\},  
	\end{equation*}
	for any homogeneous elements $x,y,z \in \mathfrak{P}$, where $|x|,|y|$ denote their degrees. We note that the category of graded Poisson algebra structures is encoded by the operad $\mathcal{P}ois$ (cf. Equation \eqref{def_operad_pois}) by considering $\mathcal{P}ois$ as a graded operad concentrated at degree $0$ in each arity. Now, as a corollary of Theorem \ref{thm_graded_left_adjoint} we obtain the following result. 
	
	\begin{corollary}\label{cor_graded_Pois}
		Let $\mathfrak{P} = \oplus_{p \ge 0} \mathfrak{P}_p$ be a graded Poisson algebra over $\K$. Then the functor $\mathfrak{H} \otimes - : \mathrm{g}\text{-}\mathrm{ComAlg} \to \mathrm{g}\text{-}\mathrm{Pois}$ admits a left adjoint if and only if each component $\mathfrak{P}_p$ of $\mathfrak{P}$ is finite-dimensional. 
	\end{corollary}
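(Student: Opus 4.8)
\emph{Proof proposal.} The plan is to deduce the statement directly from Theorem \ref{thm_graded_left_adjoint} by recognizing graded Poisson algebras as algebras over the graded operad $\mathcal{P}ois$. First I would fix the graded interpretation of $\mathcal{P}ois$: in the presentation $\mathcal{P}ois = \mathbb{T}(E_{\mathcal{L}ie}\oplus E_{\mathcal{C}om})/(R_{\mathcal{L}ie}\oplus R_{\mathcal{C}om}\oplus D)$ of Equation \eqref{def_operad_pois}, placing every arity component in cohomological degree $0$ makes $\mathcal{P}ois$ a graded operad. I would then verify that a $\mathcal{P}ois$-algebra structure in the sense of Definition \ref{algebra_graded_def} on a graded vector space $\mathfrak{P}=\oplus_{p\ge 0}\mathfrak{P}_p$, that is, a morphism of graded operads $\gamma:\mathcal{P}ois\to\mathrm{End}_{\mathfrak{P}}$, unpacks precisely to: a degree-zero graded Lie bracket $\{\cdot,\cdot\}$ obtained from $E_{\mathcal{L}ie}$ with the graded Jacobi identity enforced by $R_{\mathcal{L}ie}$; a degree-zero graded commutative associative product obtained from $E_{\mathcal{C}om}$ (the graded commutativity/associativity being enforced by the trivial $\Sm_2$-action and by $R_{\mathcal{C}om}$); and the graded Leibniz/derivation compatibility enforced by $D$. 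The crucial point is that all the Koszul signs in these identities appear automatically, since $\mathrm{End}_{\mathfrak{P}}$ is the \emph{graded} endomorphism operad and its $\Sm_n$-actions are sign-twisted. This identifies $\mathcal{P}ois$-$\mathrm{Alg}$ with $\mathrm{g}\text{-}\mathrm{Pois}$ as categories.

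Next I would match the functors. For $C\in\mathrm{g}\text{-}\mathrm{ComAlg}$, the graded analogue of Proposition \ref{funct_com_to_p-alg} equips $\mathfrak{P}\otimes C$ with the $\mathcal{P}ois$-algebra structure $\overline{\gamma}$ of Equation \eqref{graded_case_gamma_bar}; since $|\mu|=0$ for every $\mu\in\mathcal{P}ois(n)$, this reduces to the componentwise bracket and product on $\mathfrak{P}\otimes C$ carrying the standard Koszul sign, which is exactly the graded Poisson structure one places on $\mathfrak{P}\otimes C$. Hence the functor $\mathfrak{P}\otimes-:\mathrm{g}\text{-}\mathrm{ComAlg}\to\mathrm{g}\text{-}\mathrm{Pois}$ of the statement coincides with the functor $\mathfrak{P}\otimes-:\mathrm{g}\text{-}\mathrm{ComAlg}\to\mathcal{P}ois\text{-}\mathrm{Alg}$ of Section \ref{sec-3}. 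Applying Theorem \ref{thm_graded_left_adjoint} with $\p=\mathcal{P}ois$ and $\mathfrak{A}=\mathfrak{P}$ then gives the conclusion: the functor admits a left adjoint if and only if $\mathfrak{P}_p$ is finite-dimensional for each $p\ge 0$. For the only-if direction I would reuse, verbatim as in that theorem, the fact that products in $\mathrm{g}\text{-}\mathrm{Pois}$ are computed on underlying graded vector spaces, so that preservation of the countable product of copies of $\K$ (concentrated in degree $0$) forces finite-dimensionality of each homogeneous component.

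I expect the main obstacle to be the sign bookkeeping in the first step: one must check carefully that the graded-operadic formalism over $\mathrm{End}_{\mathfrak{P}}$ reproduces \emph{exactly} the sign conventions in the definition of a graded Poisson algebra (graded antisymmetry, graded Jacobi identity, and the graded Leibniz rule $\{x,y\cdot z\} = \{x,y\}\cdot z + (-1)^{|x||y|}y\cdot\{x,z\}$), and that the distributive-law presentation \eqref{def_operad_pois} remains intact when each component is reinterpreted in degree $0$, so that no relation is gained or lost. Once this identification of categories and of functors is established, the corollary follows immediately from Theorem \ref{thm_graded_left_adjoint}, and one may read off the universal graded algebra $\mathcal{C}(\mathfrak{P})$ together with its universal graded polynomials from Equation \eqref{graded universal poly}, with the generating set further reduced to the binary generators of $\mathcal{P}ois(2)=\K c\oplus\K\mu$ by Remark \ref{remark_5.3}.
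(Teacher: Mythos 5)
Your proposal is correct and follows the same route as the paper: the paper likewise obtains this corollary by viewing $\mathcal{P}ois$ (with the presentation \eqref{def_operad_pois}) as a graded operad concentrated in degree $0$ in every arity, identifying $\mathcal{P}ois$-algebras in the graded sense with graded Poisson algebras, and then invoking Theorem \ref{thm_graded_left_adjoint}. The additional sign-verification and functor-matching steps you flag are exactly the (implicit) content the paper leaves to the reader, so there is no substantive difference in approach.
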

	
	\begin{remark}
		We note that the above corollary extends the universal construction of A. L. Agore and G. Militaru \cite[Theorem 2.2]{Ag-Mil2} to the graded context.
	\end{remark}

	Let $\mathfrak{P} = \oplus_{p \ge 0} \mathfrak{P}_p$ be a graded Poisson algebra with each component $\mathfrak{P}_p$ being finite dimensional, where the graded Poisson algebra structure on $\mathfrak{P}$ is given by the graded operad morphism $\gamma: \mathcal{P}ois\rightarrow \mathrm{End}_{\mathfrak{P}}$.
	Let the set  $\big\{a_{01},\ldots,a_{0d_0};a_{11},\ldots,a_{1d_1};\ldots;a_{n1},\ldots,a_{nd_n};\ldots\big\}$ denote a vector space basis of $\mathfrak{P}$ such that $\{a_{p1},\ldots,a_{pd_p}\}$ becomes a vector space basis of the $p$-degree component $\mathfrak{P}_p$. We assume that the set of scalars 
	\begin{equation*}
		\bigcup_{p_1,p_2 \ge 0}\Big\{\alpha^{\theta,s}_{\gamma_2(c),p_1i_1,p_2i_2},~\alpha^{\theta,s}_{\gamma_2(m),p_1i_1,p_2i_2}~|~ \theta = p_1 + p_2;~ i_r = 1,\ldots,d_{p_r} \text{ for any } r = 1,2;~ 1 \le s \le d_{p_1+p_2}\Big\}
	\end{equation*}
	to be the structure constants of $\mathfrak{P}$.	
	From the discussion in Subsection \ref{sub3.2}, we deduce that the universal graded polynomials \eqref{graded universal poly} of cohomological degree $\omega$ for $\mathfrak{P}$ are given as follows.
	\begin{equation}\label{graded_uni1}
		P^{(\mathfrak{P},\omega )}_{(c,r,p_1i_1,p_2i_2)} 
		:=
		\sum_{u=1}^{d_\theta} \alpha^{\theta,u}_{\gamma_2(c),p_1i_1,p_2i_2} X^{(\omega)}_{ru}
		\hspace{0.2cm} -
		\sum_{\substack{\epsilon_1 + \epsilon_2 = \theta-\omega \\ 0 \le \epsilon_k \le p_k}}	
		\Bigg(
		\sum_{s_1=1}^{d_{p_1}}\sum_{s_2=1}^{d_{p_2}} \alpha^{\theta-\omega,r}_{\gamma_2(c),\epsilon_1 s_1,\epsilon_2 s_2}X_{s_1 i_1}^{(p_1-\epsilon_1)} X_{s_2 i_2}^{(p_2-\epsilon_2)}\Bigg),
	\end{equation}
	\begin{equation}\label{graded_uni2}
		P^{(\mathfrak{P},\omega )}_{(m,r,p_1i_1,p_2i_2)} 
		:=
		\sum_{u=1}^{d_\theta} \alpha^{\theta,u}_{\gamma_2(m),p_1i_1,p_2i_2} X^{(\omega)}_{ru}
		\hspace{0.2cm} -
		\sum_{\substack{\epsilon_1 + \epsilon_2 = \theta-\omega \\ 0 \le \epsilon_k \le p_k}}	
		\Bigg(
		\sum_{s_1=1}^{d_{p_1}}\sum_{s_2=1}^{d_{p_2}} \alpha^{\theta-\omega,r}_{\gamma_2(m),\epsilon_1 s_1,\epsilon_2 s_2}X_{s_1 i_1}^{(p_1-\epsilon_1)} X_{s_2 i_2}^{(p_2-\epsilon_2)}\Bigg),
	\end{equation}
	where $\theta = p_1 + p_2$, $0 \le \omega \le \theta$, and $r = 1, \ldots, d_\omega$. Therefore, the universal graded algebra $\mathcal{C}(\mathfrak{P})$ is given by 
	
	\begin{equation*}
		\mathcal{C}(\mathfrak{P}) := \mathbb{K}\big[X^{(\pi)}_{ij}|~i,j,\pi\geq 0 \big] \big/ J,
	\end{equation*}
	where, due to Remark \ref{remark_5.3}, $J$ is the ideal generated by universal graded  polynomials defined in Equation \eqref{graded_uni1} and \eqref{graded_uni2}.

\bigskip
\subsubsection{\it Gerstenhaber algebras}~~\\
	
	\vspace{-0.4cm}
	A Gerstenhaber algebra is an algebra over the graded operad $\mathcal{G}erst$ with the quadratic presentation $(E_{\mathcal{G}erst},R_{\mathcal{G}erst})$. Here, the space of generators is given by 
	\begin{equation*}
		E_{\mathcal{G}erst} :=  \big(0,0,\K m \oplus \K c,0,\ldots\big)
	\end{equation*} 
	which is a direct sum of two one-dimensional trivial representations of $\Sm_2$ concentrated in arity $2$, one in degree $0$ denoted by $m$ and one in degree $1$ denoted by $c$. The space of relations $R_{\mathcal{G}erst}$ is the graded $\K[\Sm_3]$-module generated by the following relations  
	\begin{equation*}
		c \circ_1 c + (c \circ_1 c)^{(123)} + (c \circ_1 c)^{(321)}, \quad c \circ_1 m - m \circ_2 c - (m \circ_1 c)^{(23)}, \quad m \circ_1 m - m \circ_2 m.
	\end{equation*}
	Now, as a corollary of Theorem \ref{thm_graded_left_adjoint} we obtain the following result. 
	\begin{corollary}\label{cor_gerstenhaber}
		Let $\mathfrak{G} = \oplus_{p \ge 0} \mathfrak{G}_p$ be a Gerstenhaber algebra. Then the functor $\mathfrak{G} \otimes - : \mathrm{g}\text{-}\mathrm{ComAlg} \to \mathrm{Gerst}$ admits a left adjoint if and only if each component $\mathfrak{G}_p$ of $\mathfrak{G}$ is finite-dimensional. 
	\end{corollary}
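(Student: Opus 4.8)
The plan is to recognize this statement as a direct specialization of Theorem \ref{thm_graded_left_adjoint} to the graded operad $\p = \mathcal{G}erst$. First I would check that the quadratic data $(E_{\mathcal{G}erst},R_{\mathcal{G}erst})$ genuinely defines a graded operad in the sense recalled in Section \ref{sec-2}: the $\Sm$-module $E_{\mathcal{G}erst}$ is graded, with $m$ placed in degree $0$ and $c$ in degree $1$; the free operad construction $\T(-)$ carries graded $\Sm$-modules to graded operads; and the three generators of $R_{\mathcal{G}erst}$ are homogeneous elements of $\T(E_{\mathcal{G}erst})(3)$ (the Jacobiator in degree $2$, the Leibniz relation in degree $1$, the associator in degree $0$), so the operadic ideal they generate is graded and the quotient $\mathcal{G}erst = \T(E_{\mathcal{G}erst})/(R_{\mathcal{G}erst})$ is a graded operad.

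Next I would verify the identification $\mathrm{Gerst} \cong \mathcal{G}erst\text{-Alg}$. Unwinding Definition \ref{algebra_graded_def}, a morphism of graded operads $\gamma : \mathcal{G}erst \to \mathrm{End}_{\mathfrak{G}}$ on a graded vector space $\mathfrak{G} = \oplus_{p\ge 0}\mathfrak{G}_p$ amounts to a degree-$0$ operation $\gamma_2(m)$ and a degree-$1$ operation $\gamma_2(c)$, both $\Sm_2$-invariant; the images of the three relations generating $R_{\mathcal{G}erst}$ then translate respectively into associativity and commutativity of $\gamma_2(m)$, the graded Jacobi identity for $\gamma_2(c)$, and the graded Poisson/derivation compatibility $\{x,yz\} = \{x,y\}z + (-1)^{|x||y|}y\{x,z\}$ — i.e., exactly the axioms of a Gerstenhaber algebra. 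Under this equivalence, the functor $\mathfrak{G}\otimes - : \mathrm{g}\text{-}\mathrm{ComAlg} \to \mathrm{Gerst}$ of the statement is precisely the functor $\mathfrak{G}\otimes - : \mathrm{g}\text{-}\mathrm{ComAlg} \to \mathcal{G}erst\text{-Alg}$ built in Subsection \ref{sub3.2}, whose value on $C$ is $\mathfrak{G}\otimes C$ equipped with the graded operad morphism $\overline{\gamma}$ of Equation \eqref{graded_case_gamma_bar}.

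Finally I would invoke Theorem \ref{thm_graded_left_adjoint} verbatim with $\p = \mathcal{G}erst$ and $\mathfrak{A} = \mathfrak{G}$: the functor $\mathfrak{G}\otimes -$ admits a left adjoint if and only if each $\mathfrak{G}_p$ is finite-dimensional. In the ``if'' direction the left adjoint is the functor $\mathcal{C}(\mathfrak{G},-)$ whose value $\mathcal{C}(\mathfrak{G})$ is the universal graded algebra of Equation \eqref{C(A)}, with unit of the adjunction given by the $\mathcal{G}erst$-algebra morphism $\eta_{\mathfrak{G}}$ of Lemma \ref{lemma_graded_eta_A} and universal property Proposition \ref{Thm_univ_prop_graded_algebras}; in the ``only if'' direction one reuses the argument of Theorem \ref{thm_graded_left_adjoint} that a left adjoint forces $\mathfrak{G}\otimes -$ to preserve the product of countably many copies of $\K$ in $\mathrm{g}\text{-}\mathrm{ComAlg}$, which by the description of products of graded vector spaces is impossible unless every homogeneous component is finite-dimensional. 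The only step needing actual verification is the dictionary in the second paragraph between the three quadratic relations of $\mathcal{G}erst$ and the classical Gerstenhaber axioms; this is a routine comparison and I do not anticipate any genuine obstacle, so the proof reduces to a one-line application of Theorem \ref{thm_graded_left_adjoint}.
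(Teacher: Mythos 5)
Your proposal is correct and matches the paper's approach exactly: the paper states this corollary as an immediate specialization of Theorem \ref{thm_graded_left_adjoint} to the graded operad $\mathcal{G}erst$, whose quadratic presentation it records just before the statement. The extra verifications you include (that $\mathcal{G}erst$ is a graded operad and that $\mathcal{G}erst$-algebras are Gerstenhaber algebras) are sound and simply make explicit what the paper leaves implicit.
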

	
	Let $\mathfrak{G} = \oplus_{p \ge 0} \mathfrak{G}_p$ be a Gerstenhaber algebra with each component $\mathfrak{G}_p$ being finite-dimensional. Also, let the set $\big\{g_{01},\ldots,g_{0d_0};g_{11},\ldots,g_{1d_1};\ldots;g_{p1},\ldots,g_{pd_p};\ldots\big\}$ denote a  vector space basis of $\mathfrak{G}$, where $\{g_{p1},\ldots,g_{pd_p}\}$ denotes the basis of $\mathfrak{G}_p$. Then, the set of scalars 
	\begin{equation*}
		\bigcup_{p_1,p_2 \ge 0}\Big\{\alpha^{\theta,s}_{\gamma_2(c),p_1i_1,p_2i_2},~\alpha^{\theta,s}_{\gamma_2(m),p_1i_1,p_2i_2}~|~ \theta = p_1 + p_2;~ i_r = 1,\ldots,d_{p_r} \text{ for any } r = 1,2;~ 1 \le s \le d_{p_1+p_2}\Big\}
	\end{equation*}    
	gives us the structure constants of $\mathfrak{G}$. Hence, the universal graded polynomials of $\mathfrak{G}$ are described as follows: 
	\begin{equation}\label{gerst_graded_uni1}
		P^{(\mathfrak{G},\omega )}_{(m,r,p_1i_1,p_2i_2)} 
		:=
		\sum_{u=1}^{d_\theta} \alpha^{\theta,u}_{\gamma_2(m),p_1i_1,p_2i_2} X^{(\omega)}_{ru}
		\hspace{0.2cm} -
		\sum_{\substack{\epsilon_1 + \epsilon_2 = \theta-\omega \\ 0 \le \epsilon_k \le p_k}}	
		\Bigg(
		\sum_{s_1=1}^{d_{p_1}}\sum_{s_2=1}^{d_{p_2}} \alpha^{\theta-\omega,r}_{\gamma_2(m),\epsilon_1 s_1,\epsilon_2 s_2}X_{s_1 i_1}^{(p_1-\epsilon_1)} X_{s_2 i_2}^{(p_2-\epsilon_2)}\Bigg),
	\end{equation}
	\begin{equation}\label{gerst_graded_uni2}
		P^{(\mathfrak{G},\omega )}_{(c,r,p_1i_1,p_2i_2)} 
		:=
		\sum_{u=1}^{d_\theta} \alpha^{\theta,u}_{\gamma_2(c),p_1i_1,p_2i_2} X^{(\omega)}_{ru}
		\hspace{0.2cm} -
		\sum_{\substack{\epsilon_1 + \epsilon_2 = \theta-\omega-1 \\ 0 \le \epsilon_k \le p_k}}	
		\Bigg(
		\sum_{s_1=1}^{d_{p_1}}\sum_{s_2=1}^{d_{p_2}} \alpha^{\theta-\omega,r}_{\gamma_2(c),\epsilon_1 s_1,\epsilon_2 s_2}X_{s_1 i_1}^{(p_1-\epsilon_1)} X_{s_2 i_2}^{(p_2-\epsilon_2)}\Bigg),
	\end{equation}
	where $\theta = p_1 + p_2$, $0 \le \omega \le \theta$, and $r = 1, \ldots, d_\omega$. Consequently, the universal graded algebra $\mathcal{C}(\mathfrak{G})$ is given by 
	\begin{equation*}
		\mathcal{C}(\mathfrak{G}) := \mathbb{K}\big[X^{(\pi)}_{ij}|~i,j,\pi\geq 0 \big] \big/ J,
	\end{equation*}
	where, due to Remark \ref{remark_5.3}, $J$ is the ideal generated by universal graded  polynomials defined in Equation \eqref{gerst_graded_uni1} and \eqref{gerst_graded_uni2}.
	
\bigskip
\subsubsection{\it Batalin-Vilkovisky algebras}~~\\
		
	\vspace{-0.4cm}
	The notion of Batalin-Vilkovisky (BV)  algebras appear in diverse places of mathematics, such as they appear in differential geometry of polyvector fields, on the Hochschild cohomology of unital cyclic associative algebras, on the homology of free loop spaces, on vertex operator algebras, etc. A BV algebra structure is given by a data $(\mathfrak{G},\cdot,\Delta)$ where $\mathfrak{G}$ is a graded vector space equipped with a graded commutative product $``\cdot"$ of degree $0$, and a unary operation $\Delta$ of degree $1$ satisfying $\Delta \circ \Delta = 0$. Furthermore, these two operation satisfy the following compatibility relation:  	
	\begin{multline*}
		\Delta(x \cdot y \cdot z) 
		- \Delta(x \cdot y) \cdot z 
		- (-1)^{|y|(|x| + |z|)} \Delta(z \cdot x) \cdot y 
		- (-1)^{|x|(|y|+|z|)} \Delta(y \cdot z) \cdot x \\
		+ \Delta(a) \cdot b \cdot c 
		+ (-1)^{|y|(|x| + |z|)} \Delta(z) \cdot x \cdot y 
		+ (-1)^{|x|(|y|+|z|)} \Delta(y) \cdot z \cdot x
		= 0,
 	\end{multline*}
 	for any homogeneous elements $x,y,z \in \mathfrak{G}$. Note that, the data of a BV algebra structure $(\mathfrak{G},\cdot,\Delta)$ is equivalent to that of a Garstenhaber algebra structure $(\mathfrak{G},\cdot,[-,-])$ endowed with a square zero unary operation $\Delta$ of degree $1$ such that the following equality holds:
 	\begin{equation*}
 		[-,-] = \Delta(- \cdot -) - \big(\Delta(-) \cdot -\big) - \big(- \cdot \Delta(-)\big).
 	\end{equation*}
 
 	\medskip
 	The operad corresponding the category of all BV algebras is denoted by $\mathcal{BV} := \T(E_{\mathcal{BV}})/ (R_{\mathcal{BV}})$. The graded $\Sm$-module is given as follows, $E_\mathcal{BV} := (0,\K\Delta,\K m, \ldots)$, where $\K \Delta$ is a one-dimensional graded vector space ($\Sm_1$-module) concentrated in degree $1$ and $\K m$ is a one-dimensional graded vector space ($\Sm_2$-module) concentrated in degree $0$. The space of relations $R_{\mathcal{BV}}$ is the $\K[\Sm_3]$-module generated by 
 	\begin{equation*}
 		\begin{cases}
 			m \circ_1 m - m \circ_2 m, ~ \Delta \circ_1 \Delta, \\
 			\Delta \circ_1 m \circ_1 m 
 			- \big((\Delta \circ_1 m) \circ_1 m\big)^{\mathrm{id} + (123) + (132)} 
 			+ \big(\Delta \circ_1 (m \circ_1 m)\big)^{\mathrm{id} + (123) + (132)}. 
 		\end{cases}
 	\end{equation*}   
 	Finally, as a corollary of Theorem \ref{thm_graded_left_adjoint} we obtain the following result. 
 	\begin{corollary}
 		Let $\mathfrak{G} = \oplus_{p \ge 0} \mathfrak{G}_p$ be a BV algebra over $\K$. Then the functor $\mathfrak{G} \otimes - : \mathrm{g}\text{-}\mathrm{ComAlg} \to \mathcal{BV}$ admits a left adjoint if and only if each component $\mathfrak{G}_p$ of $\mathfrak{G}$ is finite-dimensional.
 	\end{corollary}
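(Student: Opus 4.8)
The plan is to deduce this statement as an immediate instance of Theorem \ref{thm_graded_left_adjoint}. First I would observe that the operad $\mathcal{BV} = \T(E_{\mathcal{BV}})/(R_{\mathcal{BV}})$ described above is a genuine graded operad: the graded $\Sm$-module $E_{\mathcal{BV}} = (0, \K\Delta, \K m, 0, \ldots)$ carries the indicated internal degrees ($\Delta$ in degree $1$, $m$ in degree $0$), the free operad $\T(E_{\mathcal{BV}})$ inherits a grading from $E_{\mathcal{BV}}$ by the treewise tensor product construction, and the relations in $R_{\mathcal{BV}}$ are homogeneous, so the quotient is again a graded operad. Consequently $\mathcal{BV}$-$\mathrm{Alg}$ is a category of algebras over a graded operad in the sense of Definition \ref{algebra_graded_def}.

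Next I would record that a BV algebra structure on a graded vector space $\mathfrak{G} = \oplus_{p \ge 0}\mathfrak{G}_p$ is precisely a morphism of graded operads $\gamma : \mathcal{BV} \to \mathrm{End}_{\mathfrak{G}}$; this is exactly the content of the quadratic presentation of $\mathcal{BV}$ together with the equivalence between the data $(\mathfrak{G},\cdot,\Delta)$ and the Gerstenhaber-plus-$\Delta$ description recalled just before the corollary. Hence the category $\mathcal{BV}$ appearing in the statement coincides with $\mathcal{BV}$-$\mathrm{Alg}$, and the functor $\mathfrak{G} \otimes - : \mathrm{g}\text{-}\mathrm{ComAlg} \to \mathcal{BV}$ is the functor $\mathfrak{G} \otimes - : \mathrm{g}\text{-}\mathrm{ComAlg} \to \mathcal{BV}\text{-}\mathrm{Alg}$ whose construction (via the graded operad morphism $\overline{\gamma}$ of Equation \eqref{graded_case_gamma_bar}) was given in Subsection \ref{sub3.2}.

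With these identifications in hand, the conclusion follows verbatim from Theorem \ref{thm_graded_left_adjoint} applied to the graded operad $\p = \mathcal{BV}$ and the $\mathcal{BV}$-algebra $\mathfrak{A} = \mathfrak{G}$: the functor $\mathfrak{G} \otimes -$ admits a left adjoint if and only if each homogeneous component $\mathfrak{G}_p$ is finite-dimensional, in which case the left adjoint is $\mathcal{C}(\mathfrak{G}, -) : \mathcal{BV} \to \mathrm{g}\text{-}\mathrm{ComAlg}$, with universal graded algebra $\mathcal{C}(\mathfrak{G})$ given by the quotient of $\K[X^{(\pi)}_{ij} \mid i,j,\pi \ge 0]$ by the ideal generated by the universal graded polynomials \eqref{graded universal poly} for $\mu \in \{m, \Delta\}$ (using Remark \ref{remark_5.3} to restrict to generators of $\mathcal{BV}(1)$ and $\mathcal{BV}(2)$). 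I do not anticipate a genuine obstacle here; the only point requiring a little care is the bookkeeping of internal degrees and Koszul signs when specializing Equation \eqref{graded universal poly} to the unary generator $\Delta$ of degree $1$, but this is exactly the computation already carried out in Lemma \ref{lemma_graded_eta_A} and Proposition \ref{Thm_univ_prop_graded_algebras}, and the converse direction is the product-preservation argument of Theorem \ref{thm_graded_left_adjoint} with $\K$ concentrated in degree zero.
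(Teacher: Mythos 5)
Your proposal is correct and follows exactly the route the paper takes: the paper states this result as an immediate corollary of Theorem \ref{thm_graded_left_adjoint}, having just presented the quadratic presentation of the graded operad $\mathcal{BV}$ and the identification of BV algebras with $\mathcal{BV}$-algebras, which are precisely the verifications you spell out. The extra remarks about the universal graded polynomials and Remark \ref{remark_5.3} are not needed for the adjunction statement itself but are consistent with the surrounding discussion in the paper.
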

 	
 	\begin{remark}
 			The existence and description of universal (co)acting bialgebras and Hopf algebras, within the category of unital associative algebras, have been studied in the setting of $\Omega$-algebras in \cite{Agore-omega}. While $\mathcal{P}$-algebras considered in this work can be seen as a special case of $\Omega$-algebras (by forgetting the symmetries of the generating operations and the S-module structure spanned by the generating operations of a $\mathcal{P}$-algebra), the existence of the universal algebra $\mathcal{C}(\mathfrak{a}, \mathfrak{b})$, associated to two $\mathcal{P}$-algebras $\mathfrak{a}$ and $\mathfrak{b}$ as introduced here, does not follow from the results of \cite{Agore-omega}.
 			This is primarily because the algebra $\mathcal{C}(\mathfrak{a}, \mathfrak{b})$ is required to be universal within the category of commutative algebras. Our focus on this category is motivated by its significance in understanding the automorphism groups of $\mathcal{P}$-algebras and in the classification of abelian group gradings on such algebras, as discussed in Section~\ref{sec-6}.
 			The structured framework of $\mathcal{P}$-algebras is also useful to accommodate a wide range of important examples, including graded Lie and Leibniz algebras, graded Poisson algebras, Gerstenhaber algebras, and Batalin–Vilkovisky (BV) algebras by using graded operads instead of operads.
 	\end{remark}
	

\smallskip
\section{\large Applications} \label{sec-6}
	In this section, we give two applications of the construction of the universal coacting bialgebra of a finite-dimensional $\mathcal{P}$-algebra. First, we describe group of $\mathcal{P}$-algebra automorphisms of $\mathfrak{a}$ in the Theorem \ref{App-bijection}. Second, we characterize the abelian group gradings on a finite-dimensional $\mathcal{P}$-algebra $\mathfrak{a}$ in Theorem \ref{App-gradings}.

\bigskip
\subsection{Automorphism group of a finite-dimensional $\mathcal{P}$-algebra}~~\\
	Let $\fa$ be a finite-dimensional $\p$-algebra and $\mathcal{C}(\fa)$ be its universal coacting bialgebra. We denote the set of automorphisms of the $\p$-algebra $\fa$ by $\mathrm{Aut}_{\p\text{-}\mathrm{Alg}}(\fa)$. For the bialgebra $\mathcal{C}(\fa)$, denote the set of group-like elements 
	$$G(\mathcal{C}(\fa))=\{P\in \mathcal{C}(\fa) ~|~\Delta_\fa(P)=P\otimes P, ~\varepsilon (P)=1\}.$$
	The set $G(\mathcal{C}(\fa))$ is a monoid with respect to the multiplication on $\mathcal{C}(\fa)$. Let $\mathcal{C}(\fa)^\times$ be the finite dual bialgebra of the bialgebra $\mathcal{C}(\fa)$, defined by
	$$\mathcal{C}(\fa)^\times:=\{\chi\in \mathcal{C}(\fa)^*~|~\chi(I)=0, ~\mbox{for some ideal } I \text{ in }\mathcal{C}(\fa) \text{ with } \text{dim}_\mathbb{K}(\mathcal{C}(\fa)/I)<\infty\}.$$
	Then, let us denote the group of  all invertible group-like elements by $U(G(\mathcal{C}(\fa)^\times))$. With these notations, we get the following result generalizing Agore's and Militaru's result \cite[Theorem 3.1]{Ag-Mil} for finite-dimensional $\p$-algebras. 

	\begin{theorem}\label{App-bijection}
		Let $\fa$ be a finite-dimensional $\p$-algebra and $\mathcal{C}(\fa)$ be its universal coacting bialgebra. Then, the group of $\p$-algebra automorphisms of $\mathfrak{a}$ is isomorphic to the group of all invertible group-like elements $U(G(\mathcal{C}(\fa)^\times))$.
	\end{theorem}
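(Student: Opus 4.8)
The plan is to build the isomorphism $\mathrm{Aut}_{\p\text{-}\mathrm{Alg}}(\fa) \cong U(G(\mathcal{C}(\fa)^\times))$ in three stages: first identify $\p$-algebra endomorphisms of $\fa$ with group-like elements of the finite dual $\mathcal{C}(\fa)^\times$, then observe that composition of endomorphisms corresponds to the convolution product on $\mathcal{C}(\fa)^\times$, and finally restrict to the units on both sides. Throughout I would fix the basis $\{a_1,\dots,a_n\}$ of $\fa$ and use the presentation $\mathcal{C}(\fa) = \K[X_{si}]/J$ with $x_{si}$ the classes of the generators, $\Delta_\fa(x_{st}) = \sum_{i=1}^n x_{si} \otimes x_{it}$ and $\varepsilon_\fa(x_{st}) = \delta_{s,t}$, as established in Section~\ref{sec-4}.

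First I would set up the correspondence between $\p$-algebra morphisms $f:\fa\to\fa$ and algebra morphisms $\mathcal{C}(\fa)\to\K$. Given $f\in\mathrm{Hom}_{\p\text{-}\mathrm{Alg}}(\fa,\fa)$, write $f(a_i)=\sum_s c_{si}a_s$; since $\fa\cong\fa\otimes\K$ canonically, $f$ becomes a $\p$-algebra morphism $\fa\to\fa\otimes\K$, so by Corollary~\ref{cor_univ_prop} there is a unique algebra morphism $\chi_f:\mathcal{C}(\fa)\to\K$ with $\chi_f(x_{si})=c_{si}$. This is exactly the content of the Corollary following Theorem~\ref{thm_left_adjoint} (the case $\mathfrak{b}=\fa$, $C=\K$): the assignment $f\mapsto\chi_f$ is a bijection between $\mathrm{Hom}_{\p\text{-}\mathrm{Alg}}(\fa,\fa)$ and $\mathrm{Hom}_{\mathrm{Alg}}(\mathcal{C}(\fa),\K)$. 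Next I would check that $\mathrm{Hom}_{\mathrm{Alg}}(\mathcal{C}(\fa),\K)$ sits inside $\mathcal{C}(\fa)^\times$ as precisely $G(\mathcal{C}(\fa)^\times)$: any algebra map to $\K$ is a character, hence its kernel is a codimension-one ideal, so it lies in the finite dual; and an element $\chi\in\mathcal{C}(\fa)^*$ is group-like in $\mathcal{C}(\fa)^\times$ iff $\chi$ is multiplicative and unital, i.e.\ iff $\chi\in\mathrm{Hom}_{\mathrm{Alg}}(\mathcal{C}(\fa),\K)$. Thus $f\mapsto\chi_f$ gives a bijection $\mathrm{Hom}_{\p\text{-}\mathrm{Alg}}(\fa,\fa)\xrightarrow{\ \sim\ }G(\mathcal{C}(\fa)^\times)$.

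The heart of the argument is to show this bijection is a monoid homomorphism: $\chi_{f\circ g} = \chi_f * \chi_g$, where $*$ is the convolution on $\mathcal{C}(\fa)^\times$ induced by $\Delta_\fa$. Concretely, if $g(a_i)=\sum_j d_{ji}a_j$ and $f(a_j)=\sum_s c_{sj}a_s$, then $(f\circ g)(a_i)=\sum_{s}\big(\sum_j c_{sj}d_{ji}\big)a_s$, so $\chi_{f\circ g}(x_{si})=\sum_j c_{sj}d_{ji}$; on the other hand $(\chi_f*\chi_g)(x_{si}) = (\chi_f\otimes\chi_g)\Delta_\fa(x_{si}) = \sum_j \chi_f(x_{sj})\chi_g(x_{ji}) = \sum_j c_{sj}d_{ji}$. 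Since both sides are algebra maps agreeing on the generators $x_{si}$, they are equal. One should also note that the identity automorphism $\mathrm{id}_\fa$ corresponds to $\varepsilon_\fa$, which is the unit of the convolution monoid. Therefore $f\mapsto\chi_f$ is an isomorphism of monoids $\big(\mathrm{Hom}_{\p\text{-}\mathrm{Alg}}(\fa,\fa),\circ\big)\cong\big(G(\mathcal{C}(\fa)^\times),*\big)$.

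Finally I would pass to units. A monoid isomorphism restricts to an isomorphism of the respective groups of invertible elements, so $U\big(\mathrm{Hom}_{\p\text{-}\mathrm{Alg}}(\fa,\fa)\big)\cong U\big(G(\mathcal{C}(\fa)^\times)\big)$. It remains to identify the left-hand side with $\mathrm{Aut}_{\p\text{-}\mathrm{Alg}}(\fa)$: an endomorphism of $\fa$ invertible in the monoid $\mathrm{Hom}_{\p\text{-}\mathrm{Alg}}(\fa,\fa)$ is precisely a $\p$-algebra morphism with a two-sided inverse that is again a $\p$-algebra morphism, i.e.\ a $\p$-algebra automorphism; this is automatic since $\fa$ is finite-dimensional, a bijective $\p$-algebra endomorphism of a finite-dimensional $\p$-algebra has inverse that is linear and, by a direct check against the defining identity, again a $\p$-algebra morphism. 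Composing, $\mathrm{Aut}_{\p\text{-}\mathrm{Alg}}(\fa)\cong U(G(\mathcal{C}(\fa)^\times))$.

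The main obstacle I anticipate is purely bookkeeping rather than conceptual: one must be careful that the convolution product on $\mathcal{C}(\fa)^\times$ is the one induced by the bialgebra coproduct $\Delta_\fa$ (not some opposite), and that the matrix-multiplication bookkeeping in $\chi_{f\circ g}=\chi_f*\chi_g$ matches the index conventions in $\Delta_\fa(x_{st})=\sum_i x_{si}\otimes x_{it}$; a wrong convention would produce $\chi_g*\chi_f$ and force an order-reversal. With the conventions fixed as in Section~\ref{sec-4}, the orders line up. Everything else — that algebra characters land in the finite dual, that group-likes of $\mathcal{C}(\fa)^\times$ are exactly the characters, and that a bijective $\p$-algebra endomorphism of a finite-dimensional $\p$-algebra is an automorphism — is routine.
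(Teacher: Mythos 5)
Your proposal is correct and follows essentially the same route as the paper: identify $\mathrm{Hom}_{\mathrm{ComAlg}}(\mathcal{C}(\fa),\K)$ with $G(\mathcal{C}(\fa)^\times)$, use the adjunction bijection to match it with $\mathrm{Hom}_{\p\text{-Alg}}(\fa,\fa)$, verify this is a monoid isomorphism intertwining convolution with composition, and pass to units. Your index bookkeeping for $\chi_{f\circ g}=\chi_f\star\chi_g$ agrees with the paper's conventions ($\eta_\fa(a_i)=\sum_s a_s\otimes x_{si}$ and $\Delta_\fa(x_{st})=\sum_i x_{si}\otimes x_{it}$), and you in fact spell out the "immediate" steps the paper leaves to the reader.
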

 
	\begin{proof}
		It follows that the set $G(\mathcal{C}(\fa)^\times)$ is the same as the set of algebra homomorphisms from $\mathcal{C}(\fa)$ to $\mathbb{K}$. 
 		The bijection given in Equation \eqref{bijection1} implies that we have a bijection 
		$$\Theta:\mathrm{Hom}_{\mathrm{ComAlg}}(\mathcal{C}(\fa),\mathbb{K})\rightarrow \mathrm{Hom}_{\p\text{-Alg}}(\mathfrak{a},\fa), \quad \mbox{defined by } \Theta(\Phi)=(\mathrm{id}\otimes \Phi)\circ \eta_\mathfrak{a}.$$
		The vector space $\mathrm{Hom}_{\mathrm{ComAlg}}(\mathcal{C}(\fa),\mathbb{K})$ is a monoid with respect to the composition of maps. A monoid structure on $\mathrm{Hom}_{\mathrm{ComAlg}}(\mathcal{C}(\fa),\mathbb{K})=G(\mathcal{C}(\fa)^\times)$ is given by the convolution product, i.e., 
		$$\Phi_1\star \Phi_2(x_{ij}):=\sum_{s=1}^n\Phi_1(x_{is})\Phi_2(x_{sj}).$$
		It immediately follows that $\Theta$ is an isomorphism of monoids and thus it induces an isomorphism between the invertible elements in the monoids. If $\{a_1,a_2,\ldots,a_n\}$ is a basis of the vector space $\mathfrak{a}$, then the isomorphism between the invertible elements in the monoids is given by the map $$\zeta:U(G(\mathcal{C}(\fa)^\times))\rightarrow \mathrm{Aut}_{\p\text{-Alg}}(\fa),\quad\zeta(\Phi)(a_i):=\sum_{j=1}^n \Phi(x_{ji})a_j.$$
		Thus, the proof of the theorem is complete.
	\end{proof}

\smallskip
\subsection{Classification of abelian group gradings on  a finite-dimensional $\mathcal{P}$-algebra}

	\begin{definition}
		Let $G$ be an abelian group and $\fa$ be a $\p$-algebra given by the operad map $\gamma:\p\rightarrow \mathrm{End}_\fa$. Then, a $G$-grading on the $\p$-algebra is a collection of vector spaces $\{\fa_\sigma~|~\sigma\in G \}$ such that 
		$$\fa=\oplus_{\sigma\in G} \fa_\sigma,\quad\text{and}\quad \gamma(\mu)(x_{\sigma_1},x_{\sigma_2},\ldots,x_{\sigma_k})\in \fa_{\sigma_1\sigma_2\cdots\sigma_k},$$
		for homogeneous elements $x_{\sigma_i}\in \fa_{\sigma_i}$, $i=1,2,\ldots, k$, and $\mu\in \p(k)$.
	\end{definition}

	Let $\mathbb{K}[G]$ be the group algebra of $G$. Then, by Remark \ref{universal bialgebra const} and Diagram \eqref{universal bialgebra prop} we obtain a bijection
	\begin{equation}\label{App-bijection1}
		\mathrm{Hom}_{\mathrm{ComBiAlg}}(\mathcal{C}(\fa),\mathbb{K}[G])\longleftrightarrow 
		\big\{f\in \mathrm{Hom}_{\p\text{-Alg}}(\fa,\fa\otimes \mathbb{K}[G])~|~\fa \text{ becomes} 
		\text{ a right } \mathbb{K}[G]\text{-comodule}\big\}
	\end{equation}

	We now recall that there is a bijection between between the set of $G$-gradings on a vector space $\fa$ and the set of all right $\mathbb{K}[G]$-module structures $f:\fa\rightarrow \fa\otimes \mathbb{K}[G]$. Let $\fa=\oplus_{\sigma\in G}\fa_\sigma$ be a $G$-grading on the vector space $\fa$. The bijection is given as follows: $x\in \fa_{\sigma}$ then $f(x_\sigma)=x_\sigma\otimes \sigma$. Now, we show that the map $f:\fa\rightarrow \fa\otimes \mathbb{K}[G]$ is a $\p$-algebra morphism if and only if it corresponds to a $G$-grading on the $\p$-algebra. The map $f:\fa\rightarrow \fa\otimes \mathbb{K}[G]$ is a $\p$-algebra morphism if the following diagram is commutative
	\begin{equation}\label{diag-app2}
		\begin{CD}
			\p(k)\otimes (\mathfrak{a})^{\otimes k}@>{\gamma}>> \mathfrak{a}\\
  			@V \mathrm{id}\otimes f^{\otimes k}VV @VV f V\\
			\p(k)\otimes (\mathfrak{a}\otimes \mathbb{K}[G])^{\otimes k}@>\overline{\gamma}>> \fa\otimes \mathbb{K}[G].
		\end{CD} 
	\end{equation}
	Diagram \eqref{diag-app2} commutes if and only if for any $\sigma_1,\sigma_2,\ldots,\sigma_k\in G$ and $x_{\sigma_1},x_{\sigma_2},\ldots,x_{\sigma_k}\in \fa$, we have 
	$$\gamma(\mu)(x_{\sigma_1},x_{\sigma_2},\ldots,x_{\sigma_k})\in \fa_{\sigma_1\sigma_2\cdots\sigma_k}.$$
	Consequently, there is a bijection between the following sets
	\begin{equation}\label{App-bijection2}
		\begin{tikzcd}
			\begin{array}{c} \big\{f \in \mathrm{Hom}_{\p\text{-Alg}}(\fa,\fa\otimes \mathbb{K}[G])~|~\text{the vector} \\ \qquad \text{space } \mathfrak{a} \text{ becomes a right } \K[G]\text{-module}\big\} \end{array} && \begin{array}{c} \text{The set of all } G\text{-gradings on the } \\\p\text{-algebra } \mathfrak{a}.  \end{array}
			\arrow[<->, from=1-1, to=1-3]
		\end{tikzcd}
	\end{equation}	

	\begin{prop}\label{App-bijection3}
		There exists a bijection between the set of all $G$-gradings on the finite-dimensional $\p$-algebra $\mathfrak{a}$ and the set $\mathrm{Hom}_{\mathrm{ComBiAlg}}(\mathcal{C}(\fa),\mathbb{K}[G])$. 
	\end{prop}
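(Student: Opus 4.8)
The plan is to obtain the asserted bijection by composing two correspondences that have already been set up in the excerpt. On one side, the universal property of $\mathcal{C}(\fa)$ among commutative bialgebras gives the bijection \eqref{App-bijection1}; on the other side, the discussion around Diagram \eqref{diag-app2} gives the bijection \eqref{App-bijection2} between $\p$-algebra morphisms $f\colon\fa\to\fa\otimes\K[G]$ turning $\fa$ into a right $\K[G]$-comodule and $G$-gradings on the $\p$-algebra $\fa$. Chaining these two bijections produces exactly the statement.

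First I would observe that, since $G$ is abelian, the group algebra $\K[G]$ is a commutative bialgebra with $\Delta(\sigma)=\sigma\otimes\sigma$ and $\varepsilon(\sigma)=1$ for $\sigma\in G$, so Remark \ref{universal bialgebra const} applies with $C=\K[G]$. The assignment $\Phi\mapsto(\mathrm{id}\otimes\Phi)\circ\eta_\fa$ then sends a commutative bialgebra morphism $\Phi\colon\mathcal{C}(\fa)\to\K[G]$ to a $\p$-algebra morphism equipping $\fa$ with a right $\K[G]$-comodule structure; the existence clause of Diagram \eqref{universal bialgebra prop} gives surjectivity and the uniqueness clause gives injectivity, yielding \eqref{App-bijection1}. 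Next I would recall the classical identification of $G$-gradings $\fa=\oplus_{\sigma\in G}\fa_\sigma$ on the underlying vector space with right $\K[G]$-comodule structures $f$, via $f(x_\sigma)=x_\sigma\otimes\sigma$ for $x_\sigma\in\fa_\sigma$, together with the observation that commutativity of Diagram \eqref{diag-app2} is equivalent to $\gamma(\mu)(x_{\sigma_1},\ldots,x_{\sigma_k})\in\fa_{\sigma_1\cdots\sigma_k}$; thus such an $f$ is a $\p$-algebra morphism precisely when the associated grading is a $G$-grading of the $\p$-algebra $\fa$, which is \eqref{App-bijection2}. Composing gives
\[
\mathrm{Hom}_{\mathrm{ComBiAlg}}(\mathcal{C}(\fa),\K[G])\;\longleftrightarrow\;\{\, G\text{-gradings on the }\p\text{-algebra }\fa\,\}.
\]

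The argument is essentially formal, so I do not expect a genuine obstacle. The only points that deserve care are checking that the coassociativity and counitality of the comodule action $\psi\colon\fa\to\fa\otimes\K[G]$ correspond, under the universal property, to $\Phi$ being a morphism of \emph{bialgebras} and not merely of algebras --- which is exactly what Remark \ref{universal bialgebra const} records --- and keeping the hypothesis that $\fa$ is finite-dimensional visible throughout, since that is what guarantees the existence of $\mathcal{C}(\fa)$ and hence the applicability of \eqref{App-bijection1}.
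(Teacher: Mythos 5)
Your proposal is correct and follows exactly the paper's argument: the proof given there is precisely the composition of the bijections \eqref{App-bijection1} and \eqref{App-bijection2}, with the surrounding discussion supplying the same justifications you spell out (the universal property applied to $C=\K[G]$ and the equivalence of Diagram \eqref{diag-app2} with the grading condition). No difference in approach worth noting.
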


	\begin{proof}
		The proof follows from the bijections given by  \eqref{App-bijection1} and \eqref{App-bijection2}.
	\end{proof}

	\begin{definition}\label{equivalence of gradings}
		Let $\fa=\oplus_{\sigma\in G}\fa_\sigma$ and $\fa=\oplus_{\sigma\in G}\fa_\sigma^\prime$ be two $G$-gradings on the finite-dimensional $\p$-algebra $\fa$. Then the two gradings are said to be isomorphic if we have a $\p$-algebra automorphism $\phi: \fa\rightarrow \fa$ such that $\phi(\fa_\sigma)=\fa_\sigma^\prime$.
	\end{definition}

	Next, we define a equivalence relation on the set $\mathrm{Hom}_{\mathrm{ComBiAlg}}(\mathcal{C}(\fa),\mathbb{K}[G])$. 

	\begin{definition}\label{conjugates}
		The morphisms $\Phi_1,\Phi_2\in \mathrm{Hom}_{\mathrm{ComBiAlg}}(\mathcal{C}(\fa),\mathbb{K}[G])$ are called conjugates if there exists an element $g\in U(G(\mathcal{C}(\fa)^\times))$ such that $\Phi_2=g\star\Phi_1\star g^{-1}$. 
		Note that 
		here $``\star"$ is the convolution product in $\mathrm{Hom}_{\mathrm{ComBiAlg}}(\mathcal{C}(\fa),\mathbb{K}[G])$, given by $\Phi_1\star \Phi_2(x)=\sum \Phi_1(x_{(1)})\Phi_2(x_{(2)})$. 
	\end{definition}
	The conjugacy defined in the Definition \ref{conjugates} gives an equivalence relation on the set of bialgebra morphisms from $\mathcal{C}(\fa)$ to $\mathbb{K}[G]$. Let us denote the set of equivalence classes by $\mathrm{Hom}_{\mathrm{ComBiAlg}}(\mathcal{C}(\fa),\mathbb{K}[G])/\sim$. Then, using a similar argument as in the proof of \cite[Theorem 3.5]{Ag-Mil}, we can conclude the following result:

	\begin{theorem}\label{App-gradings}
		Let $G$ be an abelian group and $\fa$ be a finite-dimensional $\p$-algebra. Then, the isomorphism classes of $G$-gradings on $\fa$ bijectively corresponds to the equivalence classes in the set $\mathrm{Hom}_{\mathrm{ComBiAlg}}(\mathcal{C}(\fa),\mathbb{K}[G])/\sim$.
	\end{theorem}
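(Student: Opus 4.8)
The plan is to upgrade the bijection of Proposition \ref{App-bijection3} to a bijection between the respective quotient sets, by showing that it is equivariant for the natural group actions whose orbits realize the two equivalence relations. On the grading side, the group $\mathrm{Aut}_{\p\text{-}\mathrm{Alg}}(\fa)$ acts on the set of $G$-gradings of $\fa$: an automorphism $\phi$ sends the grading $\fa = \oplus_{\sigma} \fa_\sigma$ to the grading $\fa = \oplus_\sigma \phi(\fa_\sigma)$, and by Definition \ref{equivalence of gradings} two gradings are isomorphic precisely when they lie in the same orbit. On the bialgebra side, Theorem \ref{App-bijection} provides a group isomorphism $\zeta \colon U(G(\mathcal{C}(\fa)^\times)) \xrightarrow{\sim} \mathrm{Aut}_{\p\text{-}\mathrm{Alg}}(\fa)$, so $U(G(\mathcal{C}(\fa)^\times))$ acts on $\mathrm{Hom}_{\mathrm{ComBiAlg}}(\mathcal{C}(\fa),\mathbb{K}[G])$ by $g \cdot \Phi := g \star \Phi \star g^{-1}$, where $\star$ is the convolution product; by Definition \ref{conjugates}, the set $\mathrm{Hom}_{\mathrm{ComBiAlg}}(\mathcal{C}(\fa),\mathbb{K}[G])/\!\sim$ is exactly the orbit set of this action. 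Thus it suffices to show that the bijection of Proposition \ref{App-bijection3} intertwines these two actions.

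The heart of the argument is this equivariance check. Fix a basis $\{a_1,\dots,a_n\}$ of $\fa$, and recall that under the chain of bijections \eqref{App-bijection1}--\eqref{App-bijection2} a $G$-grading corresponds to the $\p$-algebra/comodule map $\rho = (\mathrm{id}\otimes\Phi)\circ\eta_\fa$ with $\rho(a_i) = \sum_{s} a_s \otimes \Phi(x_{si})$, where $\Phi \colon \mathcal{C}(\fa) \to \mathbb{K}[G]$ is the associated bialgebra morphism and $\eta_\fa(a_i) = \sum_s a_s \otimes x_{si}$. Given $\phi = \zeta(g)$, so that $\phi(a_i) = \sum_j g(x_{ji}) a_j$ and $\phi^{-1}(a_i) = \sum_j g^{-1}(x_{ji}) a_j$ (with $g^{-1}$ the convolution inverse), one checks directly from the definitions that if $\rho_1$ corresponds to the grading $\fa = \oplus_\sigma \fa_\sigma$ with bialgebra morphism $\Phi_1$, then the pushed-forward grading $\fa = \oplus_\sigma \phi(\fa_\sigma)$ has comodule map $\rho_2 = (\phi\otimes\mathrm{id})\circ\rho_1\circ\phi^{-1}$, and comparing coefficients of the basis vectors yields
\begin{equation*}
\Phi_2(x_{ri}) \;=\; \sum_{s,t} g(x_{rs})\,\Phi_1(x_{st})\,g^{-1}(x_{ti}).
\end{equation*}
Since $\Delta_\fa$ is coassociative with $\Delta_\fa(x_{ri}) = \sum_s x_{rs}\otimes x_{si}$, the right-hand side equals $(g\star\Phi_1\star g^{-1})(x_{ri})$; as the $x_{ri}$ generate $\mathcal{C}(\fa)$ as an algebra and all maps involved are algebra morphisms, this forces $\Phi_2 = g\star\Phi_1\star g^{-1}$. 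Conversely, given $\Phi_2 = g\star\Phi_1\star g^{-1}$ with $g \in U(G(\mathcal{C}(\fa)^\times))$, reading the same identity backwards and invoking Proposition \ref{App-bijection3} shows that the grading attached to $\Phi_2$ is $\fa = \oplus_\sigma \phi(\fa_\sigma)$ for $\phi = \zeta(g)$, hence isomorphic to the grading attached to $\Phi_1$.

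Combining these observations, the bijection of Proposition \ref{App-bijection3} carries $\mathrm{Aut}_{\p\text{-}\mathrm{Alg}}(\fa)$-orbits to $U(G(\mathcal{C}(\fa)^\times))$-orbits, and therefore descends to a bijection between the isomorphism classes of $G$-gradings on $\fa$ and the equivalence classes in $\mathrm{Hom}_{\mathrm{ComBiAlg}}(\mathcal{C}(\fa),\mathbb{K}[G])/\!\sim$, in parallel with \cite[Theorem 3.5]{Ag-Mil}. I expect the main obstacle to be purely organizational rather than conceptual: keeping the index and transpose conventions in $\eta_\fa$, in the formula for $\zeta$ from the proof of Theorem \ref{App-bijection}, and in the convolution product mutually consistent, so that the explicit formula for $\Phi_2(x_{ri})$ genuinely matches the abstract conjugate $g\star\Phi_1\star g^{-1}$, together with verifying that the inverse of $\phi$ in $\mathrm{Aut}_{\p\text{-}\mathrm{Alg}}(\fa)$ corresponds under $\zeta^{-1}$ to the convolution inverse of $g$.
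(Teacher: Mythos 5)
Your proposal is correct and follows essentially the same route as the paper, which at this point simply defers to the argument of \cite[Theorem 3.5]{Ag-Mil}: upgrade the bijection of Proposition \ref{App-bijection3} to an equivariance statement for the $\mathrm{Aut}_{\p\text{-}\mathrm{Alg}}(\fa)$-action on gradings and the conjugation action of $U(G(\mathcal{C}(\fa)^\times))$ via $\zeta$, with the key identity $\Phi_2(x_{ri})=\sum_{s,t}g(x_{rs})\Phi_1(x_{st})g^{-1}(x_{ti})=(g\star\Phi_1\star g^{-1})(x_{ri})$. Your writeup in fact supplies the computational details the paper leaves implicit, and they check out (commutativity of $\K[G]$ is what makes $g\star\Phi_1\star g^{-1}$ an algebra morphism, so agreement on the generators $x_{ri}$ suffices).
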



\bigskip
\section*{\large Conclusion}

	In this paper, we gave an explicit construction of a universal coacting algebra for any finite-dimensional $\mathcal{P}$-algebra. Using the operadic approach, we also extended the constructions to the graded context. Our approach is also prudent to address the following problems.

	\begin{itemize}

		\item[I.] \textbf{A universal module construction and functors between module categories.}\\
		In \cite{Rep-Th1}, A. L. Agore defined functors between the category of Lie algebra modules and the category of associated universal algebra modules. The results in \cite{Rep-Th1} leads to a representation-theoretic counterpart of Manin-Tambara’s universal coacting objects \cite{Manin, Tambara}. Later on, for other quadratic algebras such as Poisson algebras and Lie-Yamaguti algebras, the representation theoretic universal constructions have been addressed in \cite{Ag-Mil2, Lie-Y}. We already discussed here how the operadic approach unifies the universal constructions in \cite{Ag-Mil, Ag-Mil2, Lie-Y, Mil}. In a separate work, by considering modules over $\p$-algebras for a symmetric (graded) operad $\p$, we obtain a universal module construction to define functors between module categories which not only unifies the results in \cite{Rep-Th1, Ag-Mil2, Lie-Y} but also yield the explicit constructions and related results for different (graded) quadratic algebras.

		\item[II.] \textbf{Construction of a universal algebra for finite-dimensional coalgebras over (graded) Operads}\\
		For any vector space $\fa$, the coendomorphism operad $\mathrm{coEnd}_\fa$ is given by
		\begin{equation*}
			\mathrm{coEnd}_\fa(n) := \mathrm{Hom}(\fa,\fa^{\otimes n}) \quad \text{for all } n \ge 0,
		\end{equation*}
 		with the partial composition maps dual to the maps in $\mathrm{End}_{\fa}$ (Example \ref{ExEnd}). A $\p$-coalgebra structure on a vector space $V$ is an operad morphism from $\p$ to $\mathrm{coEnd}(V)$. The construction in Section \ref{sec-3} can be dualised to obtain a universal coacting algebra for finite-dimensional coalgebras over (graded) Operads.
	\end{itemize}

\bigskip

\

\end{document}